\documentclass [11pt]{article}
\usepackage{amssymb,amsmath,comment,amsthm}

\def\N{\mathbb{N}}

\def\F{\mathbb{F}}

\newtheorem{theorem}{Theorem}[section]
\newtheorem{proposition}[theorem]{Proposition}
\newtheorem{corollary}[theorem]{Corollary}
\newtheorem{lemma}[theorem]{Lemma}
\newtheorem{definition}[theorem]{Definition}
\newtheorem{remark}[theorem]{Remark}
\newtheorem{remarks}[theorem]{Remarks}
\newtheorem{conjecture}[theorem]{Conjecture}

\begin{document}
\title{On odd prime divisors of binary perfect polynomials}
\author{Gallardo Luis H. - Rahavandrainy Olivier \\
Universit\'e de Brest, UMR CNRS 6205\\
Laboratoire de Math\'ematiques de Bretagne Atlantique\\
%6, Avenue Le Gorgeu, C.S. 93837, 29238 Brest Cedex 3, France.\\
e-mail : luisgall@univ-brest.fr - rahavand@univ-brest.fr}
\maketitle
\begin{itemize}
\item[a)]
Running head: Odd prime divisors
\item[b)]
Keywords: $k$-Mersenne polynomials, sum divisors, finite fields,
characteristic $2.$
\item[c)]
Mathematics Subject Classification (2010): 11T55, 11T06.
\item[d)]
Corresponding author:
\begin{center} Olivier Rahavandrainy
\end{center}
\end{itemize}
\newpage~\\
{\bf{Abstract}}
We give \emph{admissible} conditions satisfied by the set of odd prime divisors of perfect polynomials over $\F_2$.
This allows us to prove a new characterization of \emph{all} known
perfect polynomials, and open a way to find more of them (if they exist).

{\section{Introduction}}
Let $A \in \F_2[x]$ be a nonzero polynomial. $A$ is \emph{even} if it has a linear factor and it is
\emph{odd} otherwise. We define a
\emph{Mersenne prime} (polynomial) over $\F_2$ as an irreducible polynomial of the form
$1+x^a(x+1)^b$, for some positive integers $a,b$.
Let $\omega(A)$ (resp. $\sigma(A)$) denote the number of distinct irreducible factors of (resp. the sum of all divisors of) $A$ over $\F_2$
($\sigma$ is a multiplicative
function). If $\sigma(A) = A$, then we
say that $A$ is \emph{perfect}. Finally, we say that a perfect polynomial is \emph{indecomposable} if it does not factor in two coprime nonconstant perfect polynomials.\\
In the rest of the paper, $\N$ ($\N^*$) denotes the set of (positive) natural numbers and we put:
$$\begin{array}{l}
M_1=1+x+x^2,\ M_2=1+x+x^3,\ M_3=\overline{M_2}=1+x^2+x^3,\\
M_4=1+x+x^2+x^3+x^4, M_5=\overline{M_4} = 1+x^3+x^4,\\
M_6=1+x^3+x^5,\ M_7=1+x^3+x^7,\ M_8=1+x^6+x^7,\\
M_9=\overline{M_6},\  M_{10}=\overline{M_7},\  M_{11}=\overline{M_8},\\
M_{12} = x^9+x+1, M_{13}=\overline{M_{12}}=x^9+x^8+1,\\
T_1 =x^2(x+1)M_1, T_2=\overline{T_1}, \\
T_3 = x^4(x+1)^3M_4, T_4 =\overline{T_3},\
T_5 = x^4(x+1)^4M_4\overline{M_4} = \overline{T_5},\\
T_6 = x^6(x+1)^3M_2\overline{M_2}, T_7= \overline{T_6},\\
T_8 = x^4(x+1)^6M_2\overline{M_2} M_4 \text{ and } T_9 = \overline{T_8},\\
T_{10} = x^2(x+1)(x^4+x+1){M_1}^2, \ T_{11} = \overline{T_{10}},
\end{array}$$
where $\overline{S}$ designs the polynomial obtained from a polynomial $S$, by substituting $x$ by $x+1$.\\
The only known nontrivial perfect polynomials are $T_1,\ldots, T_{11}$.
We have characterized (\cite{Gall-Rahav12}, Theorem 1.1) the first nine of them, $T_1, \ldots, T_9$, as the ones which are of the form
$\displaystyle{x^a(x+1)^b \prod_j {P_j}^{2^{n_j}-1}}$, where all the $P_j$'s are Mersenne. Furthermore, $T_{10}$ and $T_{11}$ are the ones of the form $x^a(x+1)^b M^{2h} \sigma(M^{2h})$, where $M$ is Mersenne (\cite{Gall-Rahav13}, Theorem 1.4). We would like to characterize as far as possible those polynomials and by the way, to discover other ones. We remark that odd prime polynomials involving in those polynomials are all Mersenne, except: $S_1:=1+x+x^4 = 1+x(x+1)(1+x+x^2)$.

For an odd polynomial, we introduce by means of Mersenne polynomials, the notions of \emph{representation} and \emph{length} (Sections  \ref{admissible} and \ref{lengthnotion}). We define a \emph{$k$-Mersenne} polynomial as a polynomial with length $k$. A Mersenne polynomial is a \emph{$1$-Mersenne} one. A $2$-Mersenne polynomial is of the form $1+x^a(x+1)^bM^c$, where $M$ is Mersenne and $a,b, c \in \N^*$. In particular, $S_1 = 1+x+x^2$ is the $2$-Mersenne polynomial of lowest degree (it is irreducible).

Even if that kind of polynomials is ``simple'', studying its irreducibility remains difficult. For that reason, in this paper, we start with $2$-Mersenne primes of the form ${M_1}^{abc}$. But, even in this case, we are not able to consider all possible situations because there are too much things to explore.

In order to continue our investigation, we explain in Section \ref{admissible}, how and why we choose the Mersenne's: $M_1,\ldots, M_{13}$ and the following $2$-Mersenne primes: $S_1, \ldots, S_{15}$.\\
For $Q \in \F_2[x]$ odd, we put $Q^{abc}:= 1+x^a(x+1)^bQ^c$ and $\displaystyle{Q^*(x):= x^{\deg(Q)} \cdot Q(\frac{1}{x})}$ (the reciprocal of $Q$). We remark that $\overline{Q^{abc}} = \overline{Q}\ ^{bac}$.
$$\begin{array}{l}
S_1:={M_1}^{111} = \overline{S_1},\ S_2: = {M_1}^{221},\
S_3: = {M_1}^{134}, \ S_4:= {M_1}^{311}, \
S_5: = {M_1}^{131},\\
S_{6}: = {M_1}^{314},\ S_{7}:= {M_1}^{113}, \ S_{8}:= {M_1}^{331},\
S_9: = {M_1}^{115}, \ S_{10}:= {M_1}^{411},\\
S_{11}:= {M_1}^{121}, \ S_{12}:= {M_1}^{212}, \ S_{13}:= {M_1}^{141},
\ S_{14}:= {M_1}^{211}, \ S_{15}:= {M_1}^{122}.
\end{array}$$
We shall prove (Theorem \ref{mainresult1}) that the known perfect polynomials are those which are divisible only by Mersenne primes lying in
${\cal{F}}_1:= \{M_1, \ldots, M_{13}\}$ and by $2$-Mersenne primes in ${\cal{F}}_2:=\{S_1,\ldots,S_{15}\}$. We sketch at the beginning of Section \ref{usefulfacts}, a manner to build perfect polynomials from an ``admissible'' family ${\cal{F}}$ of polynomials. Here, ${\cal{F}} := {\cal{F}}_1 \cup {\cal{F}}_2$ (Lemma \ref{Fisadmissible}).
\begin{theorem} \label{mainresult1}
Let $\displaystyle{A = x^a(x+1)^b \prod_{i=1}^{13} {M_i}^{c_i} \cdot \prod_{j=1}^{15} {S_j}^{d_j}=x^a(x+1)^b \ A_1}$, \\
$a,b, c_i, d_j \in \N, \ a,b \geq 1$ and $A_1 \not= 1$.
Then,
$A$ is (indecomposable) perfect if and only if $A, \overline{A} \in \{T_1, \ldots, T_{11}\}$.
\end{theorem}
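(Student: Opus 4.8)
The plan is to treat the two implications separately, the converse being essentially a verification and the direct implication being the substance. For ``$\Leftarrow$'' I would just check that each $T_k$, $1\le k\le 11$, has the required shape — its only even prime factors are $x,x+1$ (with exponents $\ge 1$) and its odd prime factors are Mersenne primes from $\mathcal{F}_1$, together with, for $T_{10}$ and $T_{11}$ only, the $2$-Mersenne prime $S_1=x^4+x+1={M_1}^{111}$ — and then invoke that each $T_k$ is indecomposable perfect ([\cite{Gall-Rahav12}, Theorem~1.1] for $T_1,\dots,T_9$; [\cite{Gall-Rahav13}, Theorem~1.4] for $T_{10},T_{11}$). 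Since $\overline{Q^{abc}}=\overline{Q}\ ^{bac}$ and $\mathcal{F}_1,\mathcal{F}_2$ are each stable under $Q\mapsto\overline Q$, the list $\{T_1,\dots,T_{11}\}$ is stable under $Q\mapsto\overline Q$ (the pairs are $T_1\!\leftrightarrow\! T_2$, $T_3\!\leftrightarrow\! T_4$, $T_6\!\leftrightarrow\! T_7$, $T_8\!\leftrightarrow\! T_9$, $T_{10}\!\leftrightarrow\! T_{11}$, with $T_5=\overline{T_5}$), so ``$A,\overline A\in\{T_1,\dots,T_{11}\}$'' is the same as ``$A\in\{T_1,\dots,T_{11}\}$'', and both parenthetical forms of the converse follow.

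For ``$\Rightarrow$'', suppose $A=x^a(x+1)^bA_1$ is perfect with $A_1\ne 1$. Multiplicativity turns $\sigma(A)=A$ into
$$\sigma(x^a)\,\sigma((x+1)^b)\prod_{i=1}^{13}\sigma({M_i}^{c_i})\prod_{j=1}^{15}\sigma({S_j}^{d_j})=x^a(x+1)^b\prod_{i=1}^{13}{M_i}^{c_i}\prod_{j=1}^{15}{S_j}^{d_j},$$
so every irreducible factor of the left-hand side must lie in $\{x,x+1\}\cup\mathcal{F}_1\cup\mathcal{F}_2$. I would build the argument around the factorisation of $\sigma(P^n)$ for $P$ an irreducible factor of $A$: with $n+1=2^st$, $t$ odd, one has $\sigma(P^n)=(1+P)^{2^s-1}\prod_{1<d\mid t}\phi_d(P)^{2^s}$, where $\phi_d$ is the mod-$2$ reduction of the $d$-th cyclotomic polynomial, $\deg\phi_d(P)=\varphi(d)\deg P$, and each $\phi_d(P)$ ($d>1$) is coprime to $x(x+1)$. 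Since $\mathcal{F}_1\cup\mathcal{F}_2$ is finite and admissible (Lemma~\ref{Fisadmissible}), every $\phi_d(P)$ that actually occurs must split over $\mathcal{F}_1\cup\mathcal{F}_2$; this bounds the divisors $d$ that can appear, and hence confines each exponent $c_i,d_j$ to an explicit finite set — for most indices forcing $c_i=0$ or $d_j=0$. Running the same analysis on $\sigma(x^a)=(x+1)^{2^{s'}-1}\prod_{1<d\mid t'}\phi_d(x)^{2^{s'}}$ (with $a+1=2^{s'}t'$) and on $\sigma((x+1)^b)$, and then comparing, prime by prime, the exact multiplicity on the two sides of the displayed identity, should bound $a$ and $b$ as well. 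That would leave a finite list of tuples $(a,b,c_1,\dots,c_{13},d_1,\dots,d_{15})$, and I would finish by imposing $\sigma(A)=A$ on each survivor, expecting exactly $T_1,\dots,T_{11}$ and their conjugates under $x\mapsto x+1$ to remain; indecomposability of what survives is then immediate from the cited results.

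The hard part will be precisely this finite reduction. A priori nothing bounds $a,b$, the exponent vector lives in a $30$-dimensional lattice, and $\sigma$ is genuinely self-referential on this family — e.g. $\sigma({M_1}^2)=S_1$ while $\sigma(S_1)=x(x+1)M_1$, the very loop behind $T_{10}=x^2(x+1)S_1{M_1}^2$ — so the candidate set does not obviously collapse. The real work is to orchestrate the interplay between the halving/cyclotomic recursion for $\sigma(P^n)$ and the admissibility of $\mathcal{F}=\mathcal{F}_1\cup\mathcal{F}_2$ so that (i) only finitely many exponent tuples survive and (ii) the surviving list is short enough to check by hand; equivalently, to show that the ``building'' procedure sketched at the start of Section~\ref{usefulfacts}, fed with $\mathcal{F}_1\cup\mathcal{F}_2$, yields nothing beyond $T_1,\dots,T_{11}$. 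This is also why the exact choice of the thirteen Mersenne and fifteen $2$-Mersenne primes, explained in Section~\ref{admissible}, has to be made with such care.
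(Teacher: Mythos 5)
Your overall architecture matches the paper's: sufficiency is a citation, and necessity is to be obtained by writing each exponent $2$-adically, using $\sigma(P^{2^st-1})=(1+P)^{2^s-1}\bigl(\sigma(P^{t-1})\bigr)^{2^s}$, forcing every odd prime factor of $\sigma(A)$ back into ${\cal F}={\cal F}_1\cup{\cal F}_2$, and reducing to a finite list that is checked directly. But the step you yourself flag as ``the hard part'' is exactly the mathematical content of the proof, and your sketch does not contain the idea that makes it work. The claim ``every $\phi_d(P)$ that occurs must split over ${\cal F}$, and this bounds the divisors $d$'' does not follow from finiteness and admissibility of ${\cal F}$ alone: a priori $\sigma(S^{2h})$ could be a high power of a few elements of ${\cal F}$, so its degree $2h\deg(S)$ would not be bounded. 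The paper's mechanism is (a) $\sigma(S^{2h})$ is \emph{square-free} for every $S\in\{x,x+1\}\cup{\cal F}$ (Lemma \ref{S2h-squarefree}), which for $S\in{\cal F}_2$ is a genuine new statement requiring Lemma \ref{divisorofTT'} (that $M_1$ never divides $\sigma(S^{2h})$, since $S'$ may be divisible by $M_1$); and (b) the total degree bound $\sum_{D\in{\cal F}}\deg D=184$ (Lemma \ref{totaldegreeF}). Together these give $2h\deg(S)\le 184$, hence explicit bounds on $h$, and then Lemmas \ref{divsigmx2h}, \ref{alldivsigmMers2h}, \ref{divsigmS2h} determine by finite computation the \emph{only} admissible values of $h$ and the corresponding factorizations. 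Without the square-freeness input your reduction to finitely many $d$'s (and hence finitely many $u,v,u_i,v_j$) simply does not go through.

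The second missing ingredient is how $a,b$ and the $2$-power parts $n,m,n_i,m_j$ get bounded. In the paper this is not done by a generic ``compare multiplicities prime by prime'' remark but by writing the exponents of $\sigma(A)$ explicitly as sums of terms $\varepsilon\cdot 2^{n}$, $\varepsilon\cdot 2^{m}$, etc.\ (Lemma \ref{exponentsofsigmA}), equating them with $2^{n_i}u_i-1$, $2^{m_j}v_j-1$ (Lemma \ref{corol1A=sigmA}), and extracting numerical bounds (Lemmas \ref{aboutn2345m1} and \ref{aboutnmn1}); note that Lemma \ref{aboutnmn1} also needs the external classification of perfect polynomials with $\omega\le 4$ (Lemma \ref{omegaleq4}) to dispose of the case $u,v\in\{1,3,5\}$, $u_1=1$ — a case your sketch would not detect. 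Only after Corollaries \ref{specialcases} and \ref{finalconditions} does the problem become the finite (Maple) search in three steps described at the start of Section 2. So your proposal is a correct outline of the same route, but it stops exactly where the proof begins: it neither proves finiteness (square-freeness plus the degree-$184$ bound) nor supplies the exponent-matching and $\omega\le 4$ arguments that shrink the search space to something checkable.
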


\section{Proof of Theorem \ref{mainresult1}}
We prove ``necessity'' since sufficiency is already true.\\
For some odd integers $u,v, u_i, v_j$ and for some $n,m, n_i, m_j \in \N$, put:
\begin{equation} \label{lesexposants}
a =2^nu-1,\ b=2^mv-1,\ c_i = 2^{n_i}u_i-1,\ d_j = 2^{m_j}v_j-1, i \leq 13, j \leq 15.
\end{equation}

In order to avoid several cases on $u,v,u_1,\ldots$, we give (Corollary \ref{specialcases}) an upper bound of each integer $n,m, n_1,\ldots$ involving in $A$. Then, by direct (Maple) computations (in three steps), we get (quickly) Theorem \ref{mainresult1}. In the first step, we dress a list of $[n,u,m,v,n_1,u_1,n_2,u_2]$ such that $a \geq 1, a \leq b$ and $c_2 =\gamma_2$ (see Lemmas \ref{exponentsofsigmA} and \ref{corol1A=sigmA}). We obtain $10944$ such $8$-tuples. In the second step, we apply the conditions: $d_j = \delta_j$ to get $4484$ $18$-tuples of the form $[n,u,m,v,n_1,u_1,n_2,u_2, d_1, \ldots, d_8,m_1,v_1]$. In the third step, we apply the conditions: $a=\alpha$ and $b=\beta$. We get $44$ polynomials. Among  these latter, we find $A$ such that $a\leq b$ and $\sigma(A) + A$ equals $0$. Theorem \ref{mainresult1} follows.

\begin{proposition} \label{allcases}
i) $u \geq 3$ or $v \geq 3$.\\
ii) $u, v \in \{1,3,5,7,9,13,15\}$.\\
iii) $u_1 \in \{1,3,5,7,15\}$, $u_2, u_3 \in \{1,3\}$, $u_i =1$ if $i \geq 4$.\\
iv) $v_1, v_2 \in \{1,3\}$, $v_j = 1$ if $j \geq 3$.
\end{proposition}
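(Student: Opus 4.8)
The plan is to run everything off the multiplicativity of $\sigma$ and the standard prime-power identity (which surely appears as an earlier lemma): for an irreducible $P$ and an odd integer $w$,
$$\sigma\!\left(P^{2^kw-1}\right)=(1+P)^{2^k-1}\,\sigma\!\left(P^{w-1}\right)^{2^k}.$$
For each of the relevant primes $P\in\{x,x+1,M_1,\dots,M_{13},S_1,\dots,S_{15}\}$ the polynomial $\sigma(P^{w-1})=1+P+\cdots+P^{w-1}$ is \emph{odd} (value $1$ at $0$ and at $1$) and \emph{squarefree}: writing $\sigma(P^{w-1})=\prod_{d\mid w,\,d>1}\Phi_d(P)$, each $\Phi_d$ is separable over $\F_2$ since $w$ is odd, the $\Phi_d$ are pairwise coprime, and $P'\neq0$ shares no root with any $P-\zeta$ ($\zeta$ a root of unity of order $>1$). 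Now from $\sigma(A)=A$: if $P^{c}\,\|\,A$ with $c=2^kw-1$, then $\sigma(P^{w-1})\mid A$, so every irreducible factor of $\sigma(P^{w-1})$ lies in $\mathcal F$, hence — being odd — in $\mathcal G:=\mathcal F_1\cup\mathcal F_2$; being squarefree, $\sigma(P^{w-1})\mid\prod_{g\in\mathcal G}g$, so
$$(w-1)\deg P=\deg\sigma(P^{w-1})\ \le\ D:=\sum_{g\in\mathcal G}\deg g.$$
The key point is that this bound does not involve the $2$-adic part $k$; taking $P=x,\,x+1,\,M_i,\,S_j$ it bounds $u,\,v,\,u_i,\,v_j$.

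With the exponents bounded, parts (ii)--(iv) reduce to a finite verification. For (ii) one lets $u$ run over odd integers with $u-1\le D$, factors $\sigma(x^{u-1})=\prod_{1<d\mid u}\Phi_d(x)$ over $\F_2$, and keeps those $u$ for which all irreducible factors land in $\mathcal G$; the survivors are exactly $\{1,3,5,7,9,13,15\}$ (e.g.\ $\sigma(x^{12})=S_3$, $\sigma(x^{14})=M_1M_4M_5S_1$, while $\sigma(x^{10})$ is a degree-$10$ irreducible and no element of $\mathcal G$ has degree $10$). Since $\overline{\mathcal G}=\mathcal G$ and $\sigma((x+1)^{v-1})=\overline{\sigma(x^{v-1})}$, the same list governs $v$. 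For (iii)--(iv) one repeats the computation with $P=M_i$ and $P=S_j$: when $\deg P\ge4$ already $\sigma(P^{2})=P^2+P+1$ fails to split over $\mathcal G$, forcing $u_i=1$ for $i\ge4$ and $v_j=1$ for $j\ge3$; for the low-degree primes $M_1$ (degree $2$), $M_2,M_3$ (degree $3$) and $S_1,S_2$ one enumerates $\sigma(P^{w-1})$ over the admissible odd $w$ and reads off $u_1\in\{1,3,5,7,15\}$, $u_2,u_3\in\{1,3\}$, $v_1,v_2\in\{1,3\}$.

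Part (i) is handled directly. If $u=v=1$ then $\sigma(x^a)=(1+x)^{2^n-1}$ and $\sigma((x+1)^b)=x^{2^m-1}$ are monomials. Writing $1+M_i=x^{\alpha_i}(x+1)^{\beta_i}$ and $1+S_j=x^{a_j}(x+1)^{b_j}M_1^{c_j}$ and using that $\sigma(M_i^{u_i-1})$, $\sigma(S_j^{v_j-1})$ are odd, comparing the exponent of $x$, then of $x+1$, on both sides of $\sigma(A)=A$ gives
$$a=b+\sum_i\alpha_i(2^{n_i}-1)+\sum_j a_j(2^{m_j}-1),\qquad b=a+\sum_i\beta_i(2^{n_i}-1)+\sum_j b_j(2^{m_j}-1).$$
Adding these forces $n_i=0$ and $m_j=0$ for all $i,j$ with $M_i\mid A$, $S_j\mid A$, and then $a=b$; hence $A=(x(x+1))^{2^n-1}A_1$ and $\sigma(A)=(x(x+1))^{2^n-1}\sigma(A_1)$, so $\sigma(A)=A$ is equivalent to $\sigma(A_1)=A_1$. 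Since $A_1\neq1$ is odd, this contradicts the classical fact that a nonconstant perfect polynomial over $\F_2$ is divisible by $x$. Therefore $u\ge3$ or $v\ge3$.

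I expect the main obstacle to be the size of the finite search in (ii)--(iv): the bound $D$ (around $184$) is crude, so one must lean on direct (Maple) factorisation of each $\sigma(P^{w-1})$ and a membership test against $\mathcal G$. Conceptually, the only delicate points are the squarefreeness of $\sigma(P^{w-1})$ (needed to make the degree bound $D$ legitimate, rather than merely a bound on the radical) and, in (i), the appeal to the ``every nontrivial perfect polynomial is even'' theorem.
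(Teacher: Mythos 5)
The one genuine gap is at the end of your part (i). Your valuation comparison correctly reduces the case $u=v=1$ to: $n_i=m_j=0$ for every contributing index, $a=b$, and $A_1$ an odd perfect polynomial different from $1$. But you then dispose of $A_1$ by citing ``the classical fact that a nonconstant perfect polynomial over $\F_2$ is divisible by $x$''. That is not a known fact: the non-existence of nonconstant odd perfect polynomials over $\F_2$ is an open conjecture, and what is actually known (Canaday, quoted in this paper as Lemma \ref{oddperfect}) is only that an odd perfect polynomial must be a square; the paper's own proof of (i) invokes exactly that weaker statement. So your last step fails as justified. It can be repaired without leaving your framework: once $A_1$ is an odd perfect square all of whose prime factors lie in ${\cal F}$, every prime $P\mid A_1$ occurs to an even exponent $2h$ with $\sigma(P^{2h})$ factoring in ${\cal F}$, and Lemmas \ref{alldivsigmMers2h} and \ref{divsigmS2h} leave only $P\in\{M_1,M_2,M_3,S_1,S_2\}$ with a short list of admissible $h$; a quick chase through the explicit factorizations listed there (each one introduces a prime, e.g.\ $M_4$, $M_5$, $S_7$, $S_8$, $S_1$ or $M_2$, which is then excluded in turn) shows $\sigma(A_1)=A_1$ is impossible unless $A_1=1$. (Alternatively, under the indecomposability reading of Theorem \ref{mainresult1}, $A=(x^2+x)^{2^n-1}A_1$ with both factors perfect and coprime already contradicts indecomposability.)

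Apart from this, your treatment of (ii)--(iv) is essentially the paper's: oddness and squarefreeness of $\sigma(P^{w-1})$, the fact that all of its prime factors must lie in ${\cal F}$, the total-degree bound $184$ of Lemma \ref{totaldegreeF}, and a finite machine check of which $\sigma(P^{2h})$ factor inside ${\cal F}$ (Lemmas \ref{divsigmx2h}, \ref{alldivsigmMers2h}, \ref{divsigmS2h}), so that part is fine. Two small caveats there. First, squarefreeness of $\sigma({S_j}^{2h})$ does not follow from separability of the $\Phi_d$ alone: one also needs that the odd part of ${S_j}'$ is at worst a power of $M_1$ and that $M_1\nmid\sigma({S_j}^{2h})$, which is precisely the content of Lemmas \ref{S2h-squarefree} and \ref{divisorofTT'}; your assertion that $P'$ shares no root with any $P-\zeta$ is exactly what has to be checked, not a generality. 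Second, your aside that for $\deg P\ge 4$ ``already $\sigma(P^2)$ fails'' does not by itself force $u_i=1$: $\sigma(P^2)=\Phi_3(P)$ divides $\sigma(P^{w-1})$ only when $3\mid w$, so ruling out $w=3$ says nothing about $w=5,7,\dots$; the exclusion genuinely requires the full enumeration over all $2h$ with $2h\deg P\le 184$, which you do in fact propose.
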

\begin{proof}
i): if $u=v=1$, then $x^a(x+1)^b$ is perfect so $A_1$ is odd and perfect. Lemma \ref{oddperfect} implies that $A_1$ is a square, which is impossible because $A_1 \not= 1$.\\
ii):
One has:
$$\sigma(x^a) = (1+x)^{2^n-1} \cdot (\sigma(x^{u-1}))^{2^n}, \ \sigma((x+1)^b) = x^{2^m-1} \cdot (\sigma((x+1)^{v-1}))^{2^m}.$$
Since any odd divisor of $\sigma(x^{u-1})$ and $\sigma((x+1)^{v-1})$ must belong to ${\cal{F}}$, our results about $u$ and $v$ follow from
Lemma \ref{divsigmx2h}.\\
iii): In order to keep in ${\cal{F}}$, all odd prime divisors of $\sigma(M_i^{c_i})$, we take, by Lemma \ref{alldivsigmMers2h},
$u_1 \in \{1,3,5,7,15\}$, $u_2, u_3 \in \{1,3\}$ and $u_i =1$ if $i \geq 4$.\\
iv): Same reason as in iii).
\end{proof}
\begin{lemma} [\cite{Gall-Rahav13}, Lemma 2.4] \label{oddperfect}~\\
Any odd perfect polynomials over $\F_2$ is a square.
\end{lemma}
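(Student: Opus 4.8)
The plan is to argue by contradiction, using the multiplicativity of $\sigma$ together with a factorization identity for $\sigma(P^{n})$ that is special to characteristic $2$. Write $\displaystyle{A = \prod_i P_i^{n_i}}$ with the $P_i$ distinct irreducible polynomials over $\F_2$ (the case $A=1$ is trivially a square). Since $A$ is odd, each $P_i$ is odd, so $x \nmid P_i$ and $(x+1) \nmid P_i$, which in $\F_2[x]$ means $P_i(0) = 1$ and $P_i(1) = 1$. Assume, towards a contradiction, that $A$ is not a square; then some exponent, say $n_i$, is odd.

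The key observation is that for odd $n$ one has, in $\F_2[x]$,
\[
\sigma(P^{n}) = 1 + P + \cdots + P^{n} = \frac{P^{n+1}+1}{P+1} = \frac{\bigl(P^{(n+1)/2}+1\bigr)^2}{P+1} = (1+P)\cdot\bigl(\sigma(P^{(n-1)/2})\bigr)^2 ,
\]
using that $n+1$ is even and that squaring is additive in characteristic $2$. In particular $1+P_i$ divides $\sigma(P_i^{n_i})$, hence divides $\sigma(A)$ by multiplicativity, hence divides $A$ because $A$ is perfect. Evaluating at $0$ gives $(1+P_i)(0) = 1 + P_i(0) = 1 + 1 = 0$, so $x$ divides $1+P_i$, and therefore $x \mid A$, contradicting the hypothesis that $A$ is odd. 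Consequently every exponent $n_i$ is even and $A = \bigl(\prod_i P_i^{n_i/2}\bigr)^2$ is a square.

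Since each step is elementary, there is no real obstacle here; the only point requiring a little care is the characteristic-$2$ identity above — specifically, recognizing that the parity of the number of terms in $1 + P + \cdots + P^{n}$ forces divisibility by $1+P$ when $n$ is odd, which is exactly what fails in odd characteristic and is what makes the statement hold over $\F_2$.
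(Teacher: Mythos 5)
Your proof is correct: the identity $\sigma(P^{n})=(1+P)\bigl(\sigma(P^{(n-1)/2})\bigr)^2$ for odd $n$, combined with $P_i(0)=P_i(1)=1$ for an odd irreducible $P_i$, does force $x\mid A$ if any exponent is odd. The paper itself gives no proof (it cites \cite{Gall-Rahav13}, Lemma 2.4), and your argument is exactly the standard one used there, going back to Canaday.
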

From Proposition \ref{allcases} and Corollary \ref{finalconditions}, we get
\begin{corollary} \label{specialcases}
The integers $u,v,n,m,n_j, m_j,u_j,v_j$ satisfy:
$$\begin{array}{l}
u,v \in \{1,3,5,7,9,13,15\}, \ u_1 \in \{1,3,5,7,15\}, \ u_2, u_3, v_1 \in \{1,3\},\\
n,m ,n_1 \leq 4, \  n_2, n_3, m_1 \leq 3, \ n_4, n_5 \leq 5, \text{ and for $j \geq 2$, $v_j = 1, \ m_j \in \{0,1\}$}.
\end{array}$$
\end{corollary}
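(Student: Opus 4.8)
The plan is to combine the arithmetic restrictions on the "odd parts" $u,v,u_i,v_j$ coming from Proposition \ref{allcases} with upper bounds on the $2$-adic exponents $n,m,n_i,m_i$ coming from Corollary \ref{finalconditions}. The statements on $u, v$ and on $u_1, u_2, u_3$ and $v_1$, together with the collapse $u_i = 1$ for $i \geq 4$ and $v_j = 1$ for $j \geq 3$, are already exactly parts (ii), (iii), (iv) of Proposition \ref{allcases}, so for those there is nothing to do beyond quoting that proposition. The work is therefore concentrated in (a) bounding the powers of two $n, m, n_1, n_2, n_3, n_4, n_5, m_1$ and the remaining $m_j$, and (b) sharpening $v_j = 1$ to $m_j \in \{0,1\}$ for $j \geq 2$, i.e. showing $d_j = 2^{m_j}v_j - 1 \in \{0,1\}$ for $j \geq 2$.

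First I would unwind each exponent via \eqref{lesexposants} and ask: which values of $c_i = 2^{n_i}u_i - 1$ (resp. $d_j$) are admissible, in the sense that every odd prime divisor of $\sigma(M_i^{c_i})$ (resp. of $\sigma(S_j^{d_j})$) again lies in ${\cal F} = {\cal F}_1 \cup {\cal F}_2$? This is precisely the kind of constraint packaged in Corollary \ref{finalconditions}. Since $\sigma$ is multiplicative and $\sigma(P^{2^k - 1}) = \prod_{t=0}^{k-1}\big(1 + P^{2^t}(\cdots)\big)$-type factorizations hold (more precisely $\sigma(P^{2^{n}w - 1}) = (\sigma(P^{w-1}))^{2^{n}} \cdot \sigma(P^{2^n - 1})$ with $\sigma(P^{2^n-1}) = \prod_{t=0}^{n-1}(1 + P^{2^t})$ for $P$ odd, using $1 + P^{2^{t+1}} = (1+P^{2^t})^2$... being careful in characteristic $2$), each new power of $2$ in the exponent forces in a new irreducible factor of prescribed shape; once its degree is too large to match any element of the finite list ${\cal F}$, the exponent is ruled out. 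So the strategy is: for each $i$ (resp. $j$), list the finitely many candidate pairs $(n_i, u_i)$ with $u_i$ in the set already fixed by Proposition \ref{allcases}, compute $\deg \sigma(M_i^{c_i})$ and its irreducible factorization (or at least a lower bound on the degree of some factor), and discard the pair as soon as a factor of degree exceeding $\max_{P \in {\cal F}} \deg P = 9$ appears, or a factor that simply is not on the list. This yields $n, m, n_1 \leq 4$; $n_2, n_3, m_1 \leq 3$; $n_4, n_5 \leq 5$; and for $j \geq 2$ the combination "$v_j = 1$ and $n_j$ small" collapses to $m_j \in \{0, 1\}$ once one checks that $m_j \geq 2$ (hence $d_j \geq 3$) already produces $\sigma(S_j^{3}) = (1 + S_j)(1 + S_j^2) = (1+S_j)^3$... more carefully a divisor of degree $> 9$, given $\deg S_j \geq 4$.

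The main obstacle I anticipate is purely bookkeeping rather than conceptual: there are $13 + 15 = 28$ distinct primes in ${\cal F}$, each with several admissible values of the odd part, and for each one must factor $\sigma$ of a power and verify membership in ${\cal F}$ — this is exactly why the authors resort to Maple. The subtle point to get right is the characteristic-$2$ identity $\sigma(P^{2^{n}w-1}) = \big(\sigma(P^{w-1})\big)^{2^{n}} \sigma(P^{2^n-1})$ and the fact that $\sigma(P^{2^n - 1})$ picks up a "genuinely new" irreducible factor at each level (not merely a square of an old one), since it is that new factor whose degree grows like $2^{n-1}\deg P$ and forces the bound; I would isolate this as the key lemma (it is presumably Corollary \ref{finalconditions} or a consequence of the "admissible family" machinery), and then the corollary follows by finite inspection.
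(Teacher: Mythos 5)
You split the corollary exactly as the paper does --- the constraints on the odd parts $u,v,u_i,v_j$ are Proposition \ref{allcases}, and the rest is Corollary \ref{finalconditions} --- but the mechanism you propose for actually obtaining the bounds on $n,m,n_1,\dots,n_5,m_1$ and on $m_j$ for $j\ge 2$ is wrong, and those bounds are the entire nontrivial content. In characteristic $2$ one has $\sigma(P^{2^n-1})=1+P+\cdots+P^{2^n-1}=(1+P)^{2^n-1}$, so in the factorization $\sigma(P^{2^nw-1})=(1+P)^{2^n-1}\cdot(\sigma(P^{w-1}))^{2^n}$ increasing $n$ never ``picks up a genuinely new irreducible factor'': it only raises the multiplicities of the factors of $1+P$, which for $P\in\{x,x+1\}\cup\mathcal{F}$ are just $x$, $x+1$ and $M_1$. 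Hence no factor of growing degree ever appears, and the criterion ``every odd prime factor of $\sigma(A)$ lies in $\mathcal{F}$'' imposes \emph{no} bound at all on the $2$-adic exponents; it only constrains the odd parts $u,v,u_i,v_j$, which is precisely what Lemmas \ref{divsigmx2h}, \ref{alldivsigmMers2h} and \ref{divsigmS2h} extract using the total-degree bound $184$ of Lemma \ref{totaldegreeF} (note also that the maximal degree in $\mathcal{F}$ is $12$, e.g.\ $\deg S_3=12$, not $9$). Your parenthetical claim that $d_j\ge 3$ would force a factor of $\sigma(S_j^{3})$ of degree $>9$ fails for the same reason: $\sigma(S_j^{3})=(1+S_j)^{3}$ splits into powers of $x$, $x+1$, $M_1$.

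What actually bounds the $2$-power parts is the perfectness equation $A=\sigma(A)$ compared prime by prime. Lemma \ref{exponentsofsigmA} writes each exponent $\alpha,\beta,\gamma_i,\delta_j$ occurring in $\sigma(A)$ as a $\{0,1\}$-weighted sum of a few of the powers $2^n,2^m,2^{n_i},2^{m_j}$; equating, say, $2^{m_1}v_1-1=d_1=\delta_1$ with $v_1\in\{1,3\}$ and $\delta_1$ a sum of at most three powers of $2$ forces $m_1\le 3$, and similar arithmetic gives $n_2,n_3\le 3$ and $n_4,n_5\le 5$ (Lemma \ref{aboutn2345m1}); the statements $v_j=1$ and $m_j\in\{0,1\}$ for $j\ge 2$ come from $d_j=\delta_j\in\{0,1\}$ via Corollary \ref{theoddivisors} and the explicit formulas for $\delta_2,\dots,\delta_8$. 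The bound $n,m,n_1\le 4$ (Lemma \ref{aboutnmn1}) needs in addition a case discussion on $u,v,u_1$ and, in the residual case $u,v\in\{1,3,5\}$, $u_1=1$, the classification of even perfect polynomials with $\omega\le 4$ (Lemma \ref{omegaleq4}). None of this is recoverable from a degree count of factors of $\sigma$, so the gap in your proposal is the missing exponent-comparison argument: without Lemma \ref{exponentsofsigmA} (or an equivalent), your finite inspection has nothing that constrains the $2$-adic exponents.
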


\section{Useful facts} \label{usefulfacts}
We explain how and why we have chosen the above family ${\cal{F}} = {\cal{F}}_1 \cup {\cal{F}}_2$, called \emph{admissible}.
We begin with the reciprocity stability (Sections \ref{Mersreciproque} and \ref{reciproque}) to get first members of ${\cal{F}}$, namely $M_1, \ldots, M_4,M_{12}, M_{13}, S_1, S_{2}, S_{3}, \ldots$
After that, in Section \ref{lesdiviseursdesigmA}, for $S =x, x+1$, Mersenne or $2$-Mersenne, we search possible prime divisors of some $\sigma(S^{2h})$, $h \in \N^*$, to get other members of ${\cal{F}}$. By the way, we find all possible exponents for divisors of $\sigma(A)$.
Finally, Section \ref{sigmAandA} gives all the conditions to obtain: $\sigma(A) = A$.
\subsection{Admissible family} \label{admissible}
Inspired by Lemma \ref{bar-star-stable}, we get Definition \ref{defadmissible} and Corollary \ref{oddadmissible}.
\begin{lemma} \label{bar-star-stable} Let $B$ be an even non splitting perfect polynomial over $\F_2$ and $Q$ an odd prime divisor of $B$. Then:\\
i) there exists $h \in \N^*$ such that $x^{2h}$ or $(x+1)^{2h}$ divides $B$.\\
ii) $1+Q$ divides $B$ or $\sigma(Q^{2h})$ divides $B$, for some $h \in \N^*$.\\
iii) if  $x^{2h}$ divides $B$, then for any prime factor $P$ of $\sigma(x^{2h})$, $P^*$  also divides $\sigma(x^{2h})$ and $\overline{P}$ divides $\sigma((x+1)^{2h})$.
\end{lemma}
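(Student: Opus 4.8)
The plan is to prove Lemma \ref{bar-star-stable} directly from the definition of perfection $\sigma(B)=B$, exploiting the multiplicativity of $\sigma$ together with two elementary facts: that $1+Q$ divides $\sigma(Q)$ for any odd prime $Q$, and that conjugation $S \mapsto \overline{S}$ commutes with $\sigma$ (since it is a field automorphism of $\F_2[x]$ fixing $x(x+1)$ up to swapping the two linear factors).

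For (i): Since $B$ is even, some linear factor, say $x$, divides $B$; write $B = x^e B'$ with $B'$ coprime to $x$. Then $\sigma(x^e) = 1+x+\cdots+x^e$ divides $\sigma(B)=B$. If $e$ were odd, then $x+1$ would divide $\sigma(x^e)$ hence $B$, so $B$ splits off the extra linear factor — but I must rule out the case that $B$ is ``splitting''; more carefully, the point is that $e$ must be even: if $e$ is odd then $1+x$ divides $\sigma(x^e)$, and by a parity/degree argument on the exponent of $x+1$ one is forced into a contradiction with the standard fact (essentially Lemma \ref{oddperfect}-type reasoning, or the known result that the exponents of $x$ and $x+1$ in an even perfect polynomial are even) that forces $e$ even. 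So $x^{2h}\mid B$ with $2h=e$, giving (i). I would cite or re-derive the classical fact that in an even perfect polynomial over $\F_2$ the linear factors occur to even exponents.

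For (ii): Let $Q$ be an odd prime dividing $B$, with $Q^c \| B$. By multiplicativity, $\sigma(Q^c)$ divides $\sigma(B) = B$. Now $\sigma(Q^c) = 1 + Q + \cdots + Q^c$. If $c$ is odd, then $\sigma(Q^c) = (1+Q)\,\sigma(Q^{2})\cdots$ — more precisely $1+Q$ divides $1+Q+\cdots+Q^c$ whenever $c$ is odd (group the terms in pairs $(1+Q)(1 + Q^2 + Q^4 + \cdots)$), so $1+Q$ divides $B$. If $c$ is even, write $c = 2h$ with $h \in \N^*$ (note $c \ge 1$ forces $h \ge 1$ in this branch since $c\neq 0$ as $Q\mid B$), and then $\sigma(Q^{2h})$ divides $B$ directly. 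This dichotomy is exactly the statement of (ii).

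For (iii): Suppose $x^{2h} \| B$ (or at least $x^{2h}\mid B$ with $2h$ the exponent). Apply $\overline{\phantom{x}}$ to $B$: since $B$ is perfect, so is $\overline{B}$, and $\overline{B} = \overline{x}^{\,2h}\cdot\overline{B'} = (x+1)^{2h}\cdot\overline{B'}$, so $(x+1)^{2h}$ divides $\overline{B} = \sigma(\overline B)$; comparing, $\sigma((x+1)^{2h}) = \overline{\sigma(x^{2h})}$ divides $\overline{B}$, and since $\overline{B}$ is again an even non-splitting perfect polynomial with the same set of odd prime divisors conjugated, the relation ``$\overline P$ divides $\sigma((x+1)^{2h})$'' follows from ``$P$ divides $\sigma(x^{2h})$'' by applying the conjugation automorphism. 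For the reciprocal statement: $\sigma(x^{2h}) = 1 + x + \cdots + x^{2h}$ is a palindromic (self-reciprocal) polynomial, so $(\sigma(x^{2h}))^* = \sigma(x^{2h})$; hence if $P$ is a prime factor of it, so is $P^*$, because the factorization of a self-reciprocal polynomial is stable under $*$ (the reciprocal of an irreducible factor is irreducible and still divides the self-reciprocal polynomial). I expect the main obstacle to be the careful handling in (i) of the ``non-splitting'' hypothesis and pinning down exactly why the linear exponents must be even — this needs the known structural result on even perfect polynomials rather than a one-line argument — whereas (ii) and (iii) are then essentially formal consequences of multiplicativity, the pairing identity for $\sigma(Q^{\text{odd}})$, and the $*$- and $\overline{\phantom{x}}$-symmetries of $\sigma(x^{2h})$.
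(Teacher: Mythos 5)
Parts (ii) and (iii) of your proposal are correct and are essentially the paper's own argument: for (ii), the dichotomy on the parity of the exponent $c$ of $Q$ (if $c$ is odd then $1+Q$ divides $\sigma(Q^c)$, which divides $\sigma(B)=B$; if $c=2h$ is even then $\sigma(Q^{2h})$ divides $B$) is exactly what is needed, and for (iii) the two observations $(\sigma(x^{2h}))^*=\sigma(x^{2h})$ (so the irreducible factors are permuted by $P\mapsto P^*$) and $\overline{\sigma(x^{2h})}=\sigma((x+1)^{2h})$ are precisely how the paper concludes.

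The genuine gap is in (i). Your argument rests on the ``classical fact'' that in an even perfect polynomial over $\F_2$ the linear factors occur to even exponents, from which you deduce $2h=e$ for the exponent $e$ of $x$. That fact is false: among the known perfect polynomials, $T_1=x^2(x+1)M_1$ and $T_3=x^4(x+1)^3M_4$ have $x+1$ to an odd exponent, and $T_2=\overline{T_1}=x(x+1)^2M_1$ has $x$ to exponent $1$; so neither linear exponent is forced to be even, and your identification $2h=e$ can fail. You also misread the hypothesis: ``splitting'' means $B=x^a(x+1)^b$ with no odd prime factor, so finding that $x+1$ divides $B$ is not in tension with ``non splitting'' and produces no contradiction. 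What (i) actually asserts is weaker than evenness of an exponent, namely that at least one of the two linear exponents is $\geq 2$. The paper obtains this by writing the exponents as $a=2^nu-1$ and $b=2^mv-1$ with $u,v$ odd and invoking the non-splitting hypothesis to force $u\geq 3$ or $v\geq 3$: if $u=v=1$ then $\sigma(x^a)\sigma((x+1)^b)=(x+1)^{2^n-1}x^{2^m-1}$, and comparing exponents of $x$ and $x+1$ in $\sigma(B)=B$ forces the odd part of $B$ to be a nontrivial odd perfect polynomial, which is excluded via Lemma \ref{oddperfect}; once $u\geq 3$ or $v\geq 3$, one has $a\geq u-1\geq 2$ or $b\geq v-1\geq 2$, which is the assertion. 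Your sketch gestures at ``Lemma \ref{oddperfect}-type reasoning'' but never carries this step out, and the step you do rely on is the false evenness claim, so part (i) as proposed does not go through.
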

\begin{proof}
i): $B$ does not split, so the exponent of $x$ (resp. of $x+1$) in $B$ is of the form $2^nu-1$ (resp. $2^mv-1$), where $u, v$ is odd, $u \geq 3$ or $v \geq 3$.\\
ii): The exponent  of $Q$ in $B$ is of the form $2^ts-1$, with $s$ odd and $t \geq 1$. If $s=1$, then $1+Q$ divides $(1+Q)^{2^t-1} = \sigma(Q^{2^t-1})$ which in turn, divides $\sigma(B)=B$. If $s\geq 3$, then $\sigma(Q^{s-1})$ divides $\sigma(Q^{2^ts-1})$ and $B$.\\
iii): We remark that $(\sigma(x^{2h}))^* = \sigma(x^{2h})$. So, $\displaystyle{\sigma(x^{2h}) = \prod_i U_i \cdot \prod_j V_j {V_j}^*}$, where
$U_i = {U_i}^*$ and $V_j \not= {V_j}^*$. Our result follows.
\end{proof}
\begin{definition} \label{defadmissible}
\emph{A family ${\cal{G}}$ of odd irreducible polynomials is} admissible \emph{if it satisfies at least i), ii) or iii):\\
i) For any $T \in {\cal{G}}$,
$T^* \in {\cal{G}}$ or $\overline{T} \in {\cal{G}}$.\\
ii) There exists $h \in \N^*$ such that $\sigma(x^{2h})$ or $\sigma((x+1)^{2h})$  factors in ${\cal{G}}$.\\
iii) For any $T \in {\cal{G}}$, $1+T$ or $\sigma(T^{2h})$ factors in  ${\cal{G}} \cup \{x,x+1\}$, for some $h \in \N^*$.}
\end{definition}
\begin{corollary} \label{oddadmissible}
The set of odd prime divisor(s) of any even non splitting perfect polynomials is admissible.
\end{corollary}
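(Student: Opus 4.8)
The plan is to read the statement off from part i) of Lemma \ref{bar-star-stable}, together with an elementary observation about sums of divisors of pure powers of $x$ and $x+1$. Let $B$ be an even non splitting perfect polynomial over $\F_2$, and let ${\cal{G}}$ be the set of its odd prime divisors; by definition the members of ${\cal{G}}$ are odd irreducible polynomials, so it remains only to verify that ${\cal{G}}$ satisfies one of the three conditions of Definition \ref{defadmissible}. I will aim for condition ii).

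By part i) of Lemma \ref{bar-star-stable} there is an $h \in \N^*$ with $x^{2h} \mid B$ or $(x+1)^{2h} \mid B$. Suppose first that $x^{2h} \mid B$. Since $B$ is perfect, $\sigma(x^{2h}) \mid \sigma(B) = B$. The point to check is that $\sigma(x^{2h}) = 1 + x + \cdots + x^{2h}$ is an \emph{odd} polynomial of positive degree: its constant term is $1$, so $x \nmid \sigma(x^{2h})$; evaluating at $x = 1$ gives $2h + 1 = 1$ in $\F_2$, so $(x+1) \nmid \sigma(x^{2h})$; and its degree is $2h \geq 2$. Consequently every irreducible factor of $\sigma(x^{2h})$ is an odd irreducible factor of $B$, hence lies in ${\cal{G}}$, i.e. $\sigma(x^{2h})$ factors in ${\cal{G}}$. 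This is precisely condition ii) of Definition \ref{defadmissible}. The case $(x+1)^{2h} \mid B$ is handled in exactly the same way with $x$ replaced by $x+1$, using that $\sigma((x+1)^{2h})$ is likewise odd of positive degree and divides $B$.

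There is no real obstacle here: all the work sits in Lemma \ref{bar-star-stable}, and what is left is the routine check that $\sigma(x^{2h})$ and $\sigma((x+1)^{2h})$ are odd, so that their prime factorizations cannot escape the set ${\cal{G}}$ of odd prime divisors of $B$. If one preferred, one could instead invoke part ii) of Lemma \ref{bar-star-stable} and make the same oddness remark for $1 + Q$ and $\sigma(Q^{2h})$ (where $Q \in {\cal{G}}$) to obtain condition iii) of Definition \ref{defadmissible}; but routing through condition ii) is the shortest.
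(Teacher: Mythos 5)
Your overall strategy (verify condition ii) of Definition \ref{defadmissible} by noting that $\sigma(x^{2h})$ is odd, so its prime factors cannot leave the set of odd prime divisors of $B$) is workable, but the pivotal step is wrong as written. From ``$x^{2h}$ divides $B$'' you infer ``$\sigma(x^{2h})$ divides $\sigma(B)=B$''. That inference is false: $\sigma$ is multiplicative over coprime factors, but it is not monotone under divisibility of prime powers. For instance $\sigma(x^2)=1+x+x^2$ does not divide $\sigma(x^3)=(1+x)^3$, so if the exact exponent of $x$ in $B$ were $3$, you could not conclude that $1+x+x^2$ divides $B$ even though $x^2\mid B$. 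Part i) of Lemma \ref{bar-star-stable} only hands you \emph{some} even power of $x$ or $x+1$ dividing $B$, not one whose $\sigma$ divides $B$. The repair is the one the paper uses elsewhere (e.g.\ in the proof of Proposition \ref{allcases} ii)): write the exact exponent of $x$ in $B$ as $2^nu-1$ with $u$ odd, and use
$\sigma(x^{2^nu-1})=(1+x)^{2^n-1}\bigl(\sigma(x^{u-1})\bigr)^{2^n}$,
which shows that $\sigma(x^{u-1})$ divides $\sigma(B)=B$. Since $B$ does not split, $u\geq 3$ or the analogous $v\geq 3$ holds, so $2h:=u-1$ (or $v-1$) gives $h\in\N^*$, and your oddness computation then correctly places every prime factor of $\sigma(x^{2h})$ in ${\cal G}$.

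Ironically, the alternative you mention in passing is the sound and shortest route, and is what the paper intends: part ii) of Lemma \ref{bar-star-stable} already asserts, with a correct choice of $h$ built into its proof, that for every odd prime divisor $Q$ of $B$ either $1+Q$ or some $\sigma(Q^{2h})$ divides $B$; since the only even primes are $x$ and $x+1$, these polynomials factor in ${\cal G}\cup\{x,x+1\}$, which is exactly condition iii) of Definition \ref{defadmissible}. If you route through that part of the lemma (taking it as proved), no further exponent bookkeeping is needed.
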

\begin{remark}
\emph{An admissible family is not necessarily stable both under $Q \mapsto \overline{Q}$ and $Q \mapsto Q^*$.
For example, ${\cal{G}} = \{M_1, \ldots, M_5\}$ is admissible giving the first nine perfect polynomials $T_1, \ldots, T_9$. However, ${M_5}^* = S_1 \not\in {\cal{G}}$.}
\end{remark}
\begin{lemma} \label{Fisadmissible}
The family ${\cal{F}} = {\cal{F}}_1 \cup {\cal{F}}_2$ is admissible.
\end{lemma}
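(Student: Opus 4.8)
The plan is to verify Definition \ref{defadmissible} directly for ${\cal{F}} = {\cal{F}}_1 \cup {\cal{F}}_2$ by exhibiting, for (at least) one of the three clauses, the required closure property; in fact I would check all three clauses, since this is exactly the book-keeping that justifies the choice of the $M_i$'s and $S_j$'s announced in Section \ref{admissible}. Concretely, clause i) asks that for each $T \in {\cal{F}}$ we have $T^* \in {\cal{F}}$ or $\overline{T} \in {\cal{F}}$. The list was built with this in mind: the $M_i$ come in conjugate pairs $M_i, \overline{M_i}$ (e.g.\ $M_3 = \overline{M_2}$, $M_5 = \overline{M_4}$, $M_9=\overline{M_6}$, etc.), $M_1$ is self-conjugate, and the $S_j = {M_1}^{abc}$ satisfy $\overline{{M_1}^{abc}} = \overline{M_1}\,^{bac} = {M_1}^{bac}$ because $M_1 = \overline{M_1}$, so $\overline{S_j}$ is again some ${M_1}^{a'b'c'}$; one checks from the explicit $15$ triples $(a,b,c)$ that each such $(b,a,c)$ again appears in the list (e.g.\ $\overline{S_2} = {M_1}^{221}$ has triple $(2,2,1)$, fixed; $\overline{S_4} = {M_1}^{131} = S_5$; $\overline{S_{10}} = {M_1}^{141} = S_{13}$; $\overline{S_{14}} = {M_1}^{121} = S_{11}$; and so on). So clause i) holds outright, which already suffices for admissibility.

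For completeness I would also record clauses ii) and iii), since these are the properties actually used when reconstructing perfect polynomials from ${\cal{F}}$. For clause ii), one picks a convenient small even exponent $2h$ and factors $\sigma(x^{2h})$ (equivalently $(1+x)^{2h}+\cdots+1$) over $\F_2$; for instance $\sigma(x^2) = 1+x+x^2 = M_1 \in {\cal{F}}$, so $h=1$ works immediately and $\sigma((x+1)^2) = M_1$ as well. For clause iii), I would run through each $T \in {\cal{F}}$ and produce either a factorization of $1+T$ into members of ${\cal{F}} \cup \{x, x+1\}$, or a factorization of some $\sigma(T^{2h})$ into such members. For the Mersenne $M_i = 1 + x^{a_i}(x+1)^{b_i}$ this is automatic from the very definition: $1 + M_i = x^{a_i}(x+1)^{b_i}$ already factors in $\{x, x+1\}$. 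For the $2$-Mersenne $S_j = 1 + x^{a}(x+1)^{b}{M_1}^{c}$ one has $1 + S_j = x^a(x+1)^b {M_1}^c$, which factors in $\{x, x+1, M_1\} \subseteq {\cal{F}} \cup \{x,x+1\}$. So clause iii) also holds for every element of ${\cal{F}}$, by inspection of the shapes of Mersenne and $2$-Mersenne polynomials.

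The only genuinely computational piece is verifying clause i) for the fifteen $S_j$: one must confirm that the set of triples $\{(a,b,c)\}$ used to define $S_1,\dots,S_{15}$ is closed under $(a,b,c) \mapsto (b,a,c)$. This is a finite check on the explicit list and poses no conceptual difficulty; I would present it as a short table pairing each $S_j$ with $\overline{S_j} \in {\cal{F}}_2$ (the self-conjugate ones being $S_1 = {M_1}^{111}$ and $S_2 = {M_1}^{221}$, $S_7={M_1}^{113}$, $S_9={M_1}^{115}$). Since satisfying any one of i), ii), iii) is enough for admissibility and we have verified all three, the lemma follows. The main (minor) obstacle is purely organizational: making sure the thirteen $M_i$ and fifteen $S_j$ are all accounted for without omission, which is why the authors fixed these particular lists in Section \ref{admissible} in the first place.
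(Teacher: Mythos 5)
Your verification is correct and is exactly the routine check the paper leaves implicit (it states the lemma without proof, and its later remark records precisely your key point that $\overline{T}\in{\cal{F}}$ for every $T\in{\cal{F}}$, which alone gives clause i) of Definition \ref{defadmissible}; clauses ii) and iii) follow just as you say from $\sigma(x^2)=M_1$ and from $1+T$ splitting into $x$, $x+1$, $M_1$). The only blemish is trivial: your parenthetical list of self-conjugate $S_j$'s omits $S_8={M_1}^{331}$, which does not affect the argument.
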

%\begin{proof} i) and iii): For any $T \in {\cal{F}}$, by direct computations: $1+T$ factors in ${\cal{F}} \cup \{x,x+1\}$ and $(\overline{T} \in %{\cal{F}}$ or $T^* \in {\cal{F}})$. We also see (by the way) that ${M_i}^*, {S_j}^* \not\in {\cal{F}}$ if $i \in \{9,10,11\}$ and $j \in %\{7,8,11,12,13\}$. We get ii) from Lemma \ref{divsigmx2h}.
%\end{proof}
\subsection{Representation and length of an odd polynomial} \label{lengthnotion}
\begin{definition} \label{representation}
{\emph{Let $P$ be an odd polynomial over $\F_2$. \\
i) A representation of $P$ is the sequence}} $repr(P):=[[a_1,b_1], \ldots, [a_r,b_r]],$\\
{\emph{where $P_1=P$ and for $j \leq r-1$:
$$a_j = val_x(1+P_j),\ b_j = val_{x+1}(1+P_j),\ P_{j+1} = \frac{1+P_j}{x^{a_j}(x+1)^{b_j}},$$
where $val_x(S)$ $($resp. $val_{x+1}(S))$ denotes the valuation of $S$, at $x$ $($resp. at $x+1)$.
In this case, $\displaystyle{\deg(P) = \sum_{j=1}^r (a_j + b_j)}$.}}\\
{\emph{ii) The length of $P$, denoted by $length(P)$, is the ``length'' of the representation of $P$ defined above.}}
\end{definition}
\begin{lemma}
i) $P$ is Mersenne if and only if $length(P) = 1$.\\
ii) $length(\overline{P}) = length(P)$.
\end{lemma}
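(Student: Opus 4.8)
The plan is to read both statements directly off the recursion defining $repr(P)$, using two elementary facts. First, the process is well defined and terminates: since $P_j$ is odd we have $P_j(0)=P_j(1)=1$, so $x$ and $x+1$ both divide $1+P_j$, hence $a_j,b_j\ge 1$; moreover $x^{a_j}$ and $(x+1)^{b_j}$ are by definition the exact powers of $x$ and of $x+1$ dividing $1+P_j$, so the quotient $P_{j+1}$ is again odd, and $\deg P_{j+1}=\deg P_j-a_j-b_j<\deg P_j$. Thus the sequence $(P_j)$ strictly decreases in degree and reaches the value $1$ after finitely many steps, which is exactly the index $r=length(P)$. Second, the bar operation $x\mapsto x+1$ is an involutive ring automorphism of $\F_2[x]$ exchanging $x$ and $x+1$; consequently $\overline{1+S}=1+\overline S$, and $val_x(\overline S)=val_{x+1}(S)$, $val_{x+1}(\overline S)=val_x(S)$.

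For (i): $length(P)=1$ means precisely that $P_2=(1+P)/(x^{a_1}(x+1)^{b_1})=1$, i.e. $1+P=x^{a_1}(x+1)^{b_1}$ with $a_1,b_1\ge 1$, i.e. $P=1+x^{a_1}(x+1)^{b_1}$; for irreducible $P$ this is exactly the definition of a Mersenne polynomial. Conversely, if $P=1+x^a(x+1)^b$ is Mersenne then $1+P=x^a(x+1)^b$, so $P_2=1$ and $length(P)=1$.

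For (ii): let $(\widehat P_j)$ be the sequence attached to $\overline P$. I claim $\widehat P_j=\overline{P_j}$ for all $j$, by induction. The base case is $\widehat P_1=\overline P=\overline{P_1}$. Assuming $\widehat P_j=\overline{P_j}$, the valuations in the $j$-th step of the recursion for $\overline P$ are $val_x(1+\widehat P_j)=val_x(\overline{1+P_j})=val_{x+1}(1+P_j)=b_j$ and $val_{x+1}(1+\widehat P_j)=a_j$, so
\[
\widehat P_{j+1}=\frac{1+\widehat P_j}{x^{b_j}(x+1)^{a_j}}=\frac{\overline{1+P_j}}{\overline{x^{a_j}(x+1)^{b_j}}}=\overline{\,\frac{1+P_j}{x^{a_j}(x+1)^{b_j}}\,}=\overline{P_{j+1}},
\]
which proves the claim. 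Since the bar operation is a bijection fixing $1$, $\widehat P_{j+1}=1$ if and only if $P_{j+1}=1$, so the two recursions terminate at the same index; hence $repr(\overline P)=[[b_1,a_1],\dots,[b_r,a_r]]$ and $length(\overline P)=length(P)=r$. There is no real obstacle here — the only point worth not skipping is the well-definedness/termination check in the first paragraph, which is what legitimizes the whole argument.
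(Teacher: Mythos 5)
Your proof is correct. Note that the paper states this lemma without any proof at all, so there is nothing to compare against; your argument is the natural one and supplies exactly the details the authors left implicit — that oddness of $P_j$ forces $a_j,b_j\ge 1$ so the recursion is well defined, strictly degree-decreasing and terminates at $1$ (making $length$ well defined), that $r=1$ is literally the condition $P=1+x^{a_1}(x+1)^{b_1}$, and that the substitution $x\mapsto x+1$ is an involutive automorphism conjugating the recursion for $P$ into the one for $\overline{P}$ with the roles of $a_j$ and $b_j$ swapped.
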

\begin{definition} \label{2-mersenne}
\emph{An odd polynomial $P$ is \emph{$k$-Mersenne} if it is of length $k$.}
\end{definition}
\begin{remarks}
\emph{i) An odd polynomial $P$ is {\emph{$2$-Mersenne}} if and only if it is of the form $1+x^a(x+1)^bM^c$, where $M$ is Mersenne and $a,b,c \in \N^*$.}\\
\emph{ii) The length of $P$ and that of its reciprocal $P^*$ are distinct, in general. For example, $length(S_1) = length(1+x+x^4)= 2$, but $length({S_1}^*) = length(1+x^3+x^4) = 1$.}\\
\emph{iii) If $P$ is $2$-Mersenne, then $P^*$ may be not $2$-Mersenne (example: $P=S_1$).}
\end{remarks}

\subsection{Mersenne primes with reciprocal Mersenne} \label{Mersreciproque}
\begin{lemma} [see \cite{Canaday}, p. 728-729]~\\
Let $M$ be a Mersenne prime such that $M^*$ is also Mersenne. Then\\
i) $M \in \{M_1, M_4\}$ if $M=M^*$.\\
ii)  $M \in \{M_2, M_3,M_{12}, M_{13}\}$ if $M \not= M^*$.
\end{lemma}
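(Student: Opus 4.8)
The plan is to convert the hypothesis ``$M^*$ is Mersenne'' into a finite combinatorial identity between the exponents of $x^a(x+1)^b$ and those of $x^c(x+1)^d$, to read off from it all possibilities for $(a,b)$, and then to appeal to the irreducibility analysis of \cite{Canaday} to reduce the resulting explicit list to the six stated polynomials.

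First I would set $M = 1+x^a(x+1)^b$ with $a,b\ge 1$ and $n = \deg M = a+b$; then $M^* = x^{n}+(x+1)^{b}$. Since $M$ is irreducible one has $\gcd(a,b)=1$ (a common odd prime factor $p$ yields the proper divisor $1+x^{a/p}(x+1)^{b/p}$, while $a,b$ both even would make $M$ a square), and likewise $\gcd(c,d)=1$ whenever $M^* = 1+x^{c}(x+1)^{d}$ is Mersenne. By Lucas' theorem the exponents occurring in $(x+1)^{m}$ are exactly the binary submasks of $m$; writing $S_m$ for that set (so $\sum_{i\in S_m}x^{i}=(x+1)^{m}$ and $S_m=m-S_m$), the exponent set of $M$ is $\{0\}\cup(a+S_b)$ and, reversing coefficients, that of $M^*$ is $\{n\}\cup S_b$. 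Requiring the latter to equal the exponent set $\{0\}\cup(c+S_d)$ of a Mersenne polynomial is the identity to exploit.

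Comparing the smallest nonzero elements forces $c=2^{t}$, where $2^{t}\,\|\,b$; comparing the two largest elements ($n$ and $b$ against $n$ and $n-2^{s}$, where $2^{s}\,\|\,d$) forces $a=2^{s}$, so $a$ is a power of $2$, and the same argument applied to $M^*$ (whose reciprocal is again Mersenne) shows $c$ is a power of $2$. With $\gcd(a,b)=\gcd(c,d)=1$ this leaves only $a=c$ (hence $b=d$, $M=M^*$) or, after interchanging $M$ and $M^*$, $a=1$. If $M=M^*$: then $c=a$ gives $a=2^{t}\mid b$, so $a=1$; writing the resulting identity $\{0\}\cup(1+S_b)=\{n\}\cup S_b$ in the form $S_b\,\triangle\,(S_b+1)=\{0,b+1\}$ and using $0\in S_b$, induction forces $S_b=\{0,1,\dots,b\}$, i.e. $b=2^{k}-1$; thus $M=1+x(x+1)^{2^{k}-1}=(x^{2^{k}+1}+1)/(x+1)=\prod_{1<e\mid 2^{k}+1}\Phi_e(x)$, which for $k\ge 3$ is reducible since then either $2^{k}+1$ is composite or $2$ has order at most $2k<2^{k}$ modulo the prime $2^{k}+1$; so $k\in\{1,2\}$ and $M\in\{M_1,M_4\}$. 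If $a=1$ and $M\ne M^*$: then $b$ is even (odd $b$ would give $c=1$, $d=b$, $M=M^*$), and the identity $\{b+1\}\cup S_b=\{0\}\cup(2^{t}+S_d)$ forces, by a short manipulation of binary expansions, $b$ to be a power of $2$, say $b=2^{k}$, whence $M=1+x(x+1)^{2^{k}}=1+x+x^{2^{k}+1}$ with $k\ge 1$. For even $k$, $x^{2}+x+1$ divides $M$ (as $2^{k}+1\equiv 2\pmod 3$); for odd $k\ge 5$, $1+x+x^{2^{k}+1}$ is reducible by \cite{Canaday}; and $k=1,3$ give $M_2,M_{12}$. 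Reciprocating, $M\in\{M_2,M_3,M_{12},M_{13}\}$.

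The arithmetic and the exponent bookkeeping are routine; the real obstacle is the irreducibility endgame, i.e. showing that the two infinite families $(x^{2^{k}+1}+1)/(x+1)$ and $1+x+x^{2^{k}+1}$ contain no irreducibles beyond the small $k$ found above. For the first this is transparent from the cyclotomic factorization, but for the trinomial family one is forced into Swan-type parity-of-factor-count (discriminant) computations, and it is precisely these that one imports from \cite{Canaday}, pp. 728--729.
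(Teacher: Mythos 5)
The paper gives no proof of this lemma at all: it is stated purely as a citation to Canaday, pp.~728--729. So there is no ``paper's route'' to compare against; what you have done is reconstruct an argument, and your reconstruction is sound. The exponent-set bookkeeping via Lucas is correct: from $\{n\}\cup S_b=\{0\}\cup(c+S_d)$ the smallest nonzero and second largest elements do force $c=2^{t}$ ($2^{t}\,\|\,b$) and $a=2^{s}$ ($2^{s}\,\|\,d$), and with $\gcd(a,b)=\gcd(c,d)=1$ this correctly collapses to ``$M=M^*$'' or ``$a=1$, $b$ even'' up to reciprocation. The step you wave at as ``a short manipulation of binary expansions'' does go through: if $b$ is even and not a power of $2$, then $0$ is a bit of $d=b+1-2^{t}$, so $2^{t}+1\in c+S_d$ lies in the right-hand set but is neither $b+1$ nor a submask of the even number $b$, a contradiction; so $b=2^{k}$ as claimed. (Alternatively, differentiating $g(x)^{2^t}=(x+1)^d+x^d$ forces $d=1$ directly.) The cyclotomic endgame for $(x^{2^k+1}+1)/(x+1)$ is complete as you state it. Your only inaccuracy is historical rather than mathematical: Canaday's proof that $x^{2^k+1}+x+1$ is reducible for $k\ge 4$ is not a Swan-type discriminant computation (Swan's theorem postdates Canaday by two decades) but the elementary observation that a root $\alpha$ satisfies $\alpha^{2^{3k}}=\alpha$, so every irreducible factor has degree dividing $3k<2^k+1$; since you defer that step to the citation anyway, exactly as the paper does for the whole lemma, this does not affect correctness.
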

\subsection{Reciprocal of some $2$-Mersenne polynomials} \label{reciproque}
In this section, we suppose that $Q=M_1=1+x+x^2$ and $Q^{abc}$ is irreducible. So,
$(Q^{abc})^* = x^{a+b+2c}+(x+1)^b(x^2+x+1)^c, \text{ with $\gcd(a,b,c) =1$}.$\\
We study the case where $(Q^{abc})^*$ is Mersenne or  $(Q^{abc})^* = Q^{abc}$ or $(Q^{abc})^*$ is an other $2$-Mersenne. We would like to precise that we are not able to get all such polynomials, because of the difficulty to prove their irreducibility.

\subsubsection{Case  $(Q^{abc})^*$ is Mersenne}
We put $(Q^{abc})^* = 1+x^d(x+1)^e$ for some positive integers $d,e$. One has: $a+b+2c = d+e$.
We consider several cases on the parity of $a,b,c,d$ and $e$. By the following lemmas, we obtain:
$$S_1,S_{10}, S_{14}, S_{15}, \text{ with $(S_1)^*=M_5, (S_{10})^* = M_7$, $(S_{14})^* = M_6, (S_{15})^* = M_8.$}$$

\begin{lemma}
$a$ and $b$ are not both even.
\end{lemma}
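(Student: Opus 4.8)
The plan is to argue by contradiction: suppose $a$ and $b$ are both even. Since $(Q^{abc})^* = 1+x^d(x+1)^e$ is assumed to be Mersenne (hence irreducible of this shape) and we have the degree relation $a+b+2c = d+e$, the first thing I would extract is the constraint $\gcd(a,b,c)=1$ recorded just above the lemma. If $a$ and $b$ are both even, then for $\gcd(a,b,c)=1$ to hold, $c$ must be odd. So the working hypothesis becomes: $a=2a'$, $b=2b'$ with $a',b'\in\N^*$, and $c$ odd.

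Next I would look at $Q^{abc} = 1+x^a(x+1)^bM_1^c$ modulo squares, using characteristic $2$. Write $a=2a'$, $b=2b'$; then $x^a(x+1)^b = \bigl(x^{a'}(x+1)^{b'}\bigr)^2$, so $Q^{abc} = 1 + \bigl(x^{a'}(x+1)^{b'}\bigr)^2 M_1^c$. Here is the key move: $M_1 = 1+x+x^2$ and $M_1$ is itself not a square, but $M_1^c = M_1^{c-1}M_1 = (M_1^{(c-1)/2})^2 M_1$ since $c$ is odd. Hence $Q^{abc} = 1 + \bigl(x^{a'}(x+1)^{b'}M_1^{(c-1)/2}\bigr)^2 M_1 = 1 + R^2 M_1$ where $R := x^{a'}(x+1)^{b'}M_1^{(c-1)/2}$. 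Now I would substitute $M_1 = x^2+x+1$ and expand: $R^2 M_1 = R^2 x^2 + R^2 x + R^2 = (Rx)^2 + (R^2 x) + R^2$, so $Q^{abc} = 1 + R^2 + R^2 x + (Rx)^2$. The term $R^2 x$ is the only non-square term, so $Q^{abc} = \bigl(1 + R + Rx\bigr)^2 + R^2 x + \text{(already counted)}$ — I should be careful here; the cleaner route is to evaluate or take a derivative.

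The cleanest argument, which is what I would actually carry out, uses the formal derivative. In $\F_2[x]$, a polynomial $F$ is a square if and only if $F' = 0$. Compute $(Q^{abc})' = \bigl(x^a(x+1)^b M_1^c\bigr)'$. Since $M_1' = 1+2x = 1$ and $(x^a)' = ax^{a-1} = 0$ (as $a$ even), $((x+1)^b)' = 0$ (as $b$ even), we get by the product rule $(Q^{abc})' = x^a(x+1)^b \cdot c M_1^{c-1} M_1' = x^a(x+1)^b M_1^{c-1}$ (using $c$ odd so $c\equiv 1$). This is nonzero, so $Q^{abc}$ is not a square — consistent with irreducibility, no contradiction yet. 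So instead I turn to the reciprocal. The reciprocal $(Q^{abc})^* = x^{a+b+2c} + (x+1)^b(x^2+x+1)^c$; with $a,b$ even and $c$ odd, $a+b+2c$ is even, $(x+1)^b$ is a square, and $(x^2+x+1)^c = (x^2+x+1)\cdot\bigl((x^2+x+1)^{(c-1)/2}\bigr)^2$. So $(Q^{abc})^* = \bigl(x^{(a+b)/2+c}\bigr)^2 + (x^2+x+1)\bigl((x+1)^{b/2}(x^2+x+1)^{(c-1)/2}\bigr)^2$. Writing $S := (x+1)^{b/2}(x^2+x+1)^{(c-1)/2}$, this is $P_0^2 + (x^2+x+1)S^2 = P_0^2 + x^2 S^2 + x S^2 + S^2 = (P_0+xS+S)^2 + xS^2$; the residual non-square term $xS^2$ has a derivative $S^2 \ne 0$, so $(Q^{abc})^*$ is not a square. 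But a Mersenne prime $1+x^d(x+1)^e$ with $d,e\ge 1$ need not be a square either, so again I need a sharper invariant: I would compare this with the explicit form $1+x^d(x+1)^e$ and derive a contradiction on degrees or on the constant/leading coefficients — namely $(Q^{abc})^*$ has constant term equal to the coefficient pattern forcing $e$ even and $d$ with specific parity, which then clashes with $d+e = a+b+2c$ being even together with the known parity restrictions on Mersenne primes from \cite{Canaday}.

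The main obstacle, I expect, is precisely this last step: translating the square/non-square and parity information about $(Q^{abc})^*$ into a genuine contradiction with its being a Mersenne prime of the form $1+x^d(x+1)^e$. The cleanest resolution is likely to invoke the standard fact (Canaday, p.~728--729, cited in the excerpt) that in a Mersenne prime $1+x^d(x+1)^e$ the exponents $d,e$ cannot both be even — so I would aim to show that the assumption "$a,b$ both even" forces, via the reciprocal identity $(Q^{abc})^* = x^{d}(x+1)^{e} + 1$ matched against $x^{a+b+2c}+(x+1)^b(x^2+x+1)^c$, that both $d$ and $e$ are even, contradicting Canaday. Concretely, $e = b$ is even by direct reading of the reciprocal expansion near $x=1$, and $d$ then satisfies $d = a+b+2c - e = a+2c$, which is even since $a$ is even; this is the contradiction. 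The routine calculation I am skipping is the careful bookkeeping of which monomial of $(Q^{abc})^*$ contributes the $(x+1)^e$ factor, i.e. verifying $e=b$, but that follows from $\operatorname{val}_{x+1}\bigl((Q^{abc})^*\bigr) = \operatorname{val}_{x+1}\bigl((x+1)^b(x^2+x+1)^c\bigr) = b$ since $x^{a+b+2c}$ is a unit at $x+1$ and $x^2+x+1$ is coprime to $x+1$.
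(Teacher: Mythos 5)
Your final step is where the argument breaks, and unfortunately it is the only step meant to deliver the contradiction. You claim $\operatorname{val}_{x+1}\bigl((Q^{abc})^*\bigr)=e$ on one side and $\operatorname{val}_{x+1}\bigl(x^{a+b+2c}+(x+1)^b(x^2+x+1)^c\bigr)=b$ on the other, and conclude $e=b$. Both computations are wrong: since $b\geq 1$, one has $Q^{abc}(1)=1$, hence $(Q^{abc})^*(1)=1\neq 0$ and the $(x+1)$-valuation of $(Q^{abc})^*=1+x^d(x+1)^e$ is $0$, not $e$; likewise the valuation of a sum of two terms with \emph{different} $(x+1)$-valuations ($0$ for $x^{a+b+2c}$ and $b$ for $(x+1)^b(x^2+x+1)^c$) is the minimum, namely $0$, not $b$. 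So ``$e=b$'' is unsupported, and the whole strategy of forcing $d,e$ both even cannot succeed: since $d+e=a+b+2c$ is even and $1+x^d(x+1)^e$ is irreducible (so not a square), the parities are in fact forced the other way, $d$ and $e$ both \emph{odd}, which is perfectly consistent. Parity bookkeeping alone therefore yields no contradiction; a finer use of the identity is needed. (Incidentally, under the hypotheses one eventually finds $e=b+1$, not $e=b$.)

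The missing idea is to differentiate the \emph{identity} $x^{a+b+2c}+(x+1)^b(x^2+x+1)^c=1+x^d(x+1)^e$ itself, rather than testing each side separately for being a square as you do. With $a,b$ even, $c$ odd (from $\gcd(a,b,c)=1$) and $d,e$ odd (as above), differentiation gives $(x+1)^b(x^2+x+1)^{c-1}=x^{d-1}(x+1)^{e-1}$, whence $c=1$, $d=1$, $b=e-1$; comparing degrees, $a+b+2c=d+e$ then forces $a=0$, contradicting $a\geq 1$. This is exactly the paper's proof, and it shows why your derivative computations, though correct, were aimed at the wrong object: the information lives in the equality of the two expressions for the reciprocal, not in the non-squareness of either one.
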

\begin{proof}
If $a$ and $b$ are both even, then $c$, $d$ and $e$ are all odd. By differentiating, we get:
$(x+1)^b(x^2+x+1)^{c-1} = x^{d-1} (x+1)^{e-1}$ and thus $c=1$, $d=1$, $b=e-1$,
$a+e-1+2 = a+b+2c=d+e = 1+e$ and $a=0$, which is impossible.
\end{proof}
\begin{lemma}
If $a$ and $b$ are both odd, then $a=c=1, d=3, b=e=2^r-3$ for some $r \geq 2$. In particular, for $r=2$, we get $S_1$ and $M_5 = (S_1)^*$.
\end{lemma}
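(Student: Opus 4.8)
The plan is to imitate the differentiation argument already used for the two preceding lemmas. Spelling out the hypothesis $(Q^{abc})^* = 1 + x^d(x+1)^e$ gives the identity $x^{a+b+2c} + (x+1)^b(x^2+x+1)^c = 1 + x^d(x+1)^e$ in $\F_2[x]$. Since $a$ and $b$ are odd, $a+b+2c$ is even, so $x^{a+b+2c}$ is a square and vanishes under the formal derivative. Differentiating over $\F_2$ and simplifying (using $b$ odd and $(x^2+x+1)'=1$) I would obtain
\[
(x+1)^{b-1}(x^2+x+1)^{c-1}\bigl[(x^2+x+1)+c(x+1)\bigr] = x^{d-1}(x+1)^{e-1}\bigl[d(x+1)+ex\bigr],
\]
an identity, call it $(\star)$, whose two sides are both nonzero --- note that $(x^2+x+1)+c(x+1)$ equals $x^2$ when $c$ is odd and $x^2+x+1$ when $c$ is even.

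The next step is to read off the exact power of the irreducible $x^2+x+1$ on each side of $(\star)$. On the right, $x^{d-1}(x+1)^{e-1}$ is prime to $x^2+x+1$ and $d(x+1)+ex$ has degree $\le 1$; it is moreover nonzero, since otherwise $d$ and $e$ would both be even, making the right side $0$ and hence the left side $0$ as well, which is impossible. So the right side is prime to $x^2+x+1$, while the left side is divisible by $(x^2+x+1)^{c-1}$ if $c$ is odd and by $(x^2+x+1)^c$ if $c$ is even. This forces $c$ odd and then $c=1$. With $c=1$, identity $(\star)$ reduces to $(x+1)^{b-1}x^2 = x^{d-1}(x+1)^{e-1}\bigl[d(x+1)+ex\bigr]$. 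Since $d+e = a+b+2c = a+b+2$ is even, $d$ and $e$ have the same parity; they cannot both be even (right side $0$), hence both are odd and $d(x+1)+ex = 1$. Comparing $x$-adic and $(x+1)$-adic valuations in $(x+1)^{b-1}x^2 = x^{d-1}(x+1)^{e-1}$ then gives $d=3$ and $b=e$, whence $a = d+e-b-2 = 1$.

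Finally I would feed $a=c=1$, $d=3$, $e=b$ back into the original identity, getting $x^{b+3} = 1 + (x+1)^b(x^3+x^2+x+1)$; since $x^3+x^2+x+1 = (x+1)^3$ over $\F_2$, this collapses to $x^{b+3} + (x+1)^{b+3} = 1$. Now $x^n + (x+1)^n = 1 + \sum_{k=1}^{n-1}\binom{n}{k}x^k$, so the equation holds exactly when all interior binomial coefficients are even, i.e. when $n$ is a power of $2$; as $b\ge 1$ this yields $b+3 = 2^r$ with $r\ge 2$, that is $b = e = 2^r - 3$. The case $r=2$ gives $b=1$, so $Q^{abc} = Q^{111} = 1 + x(x+1)(x^2+x+1) = 1 + x + x^4 = S_1$, with reciprocal $1 + x^3 + x^4 = M_5$. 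I expect the main obstacle to be the valuation bookkeeping at $x^2+x+1$ in $(\star)$ --- in particular ruling out ``$c$ even'' and ``$d,e$ both even'' --- together with spotting the factorization $x^3+x^2+x+1=(x+1)^3$ that makes the last equation collapse; the rest is routine.
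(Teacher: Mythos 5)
Your proof is correct and follows essentially the same route as the paper: differentiate the identity $x^{a+b+2c}+(x+1)^b(x^2+x+1)^c=1+x^d(x+1)^e$, compare factorizations to force $c=1$, $d=3$, $b=e$, $a=1$, then substitute back to get $1+x^{b+3}=(x+1)^{b+3}$ and hence $b+3=2^r$. The only differences are cosmetic improvements: you derive that $d,e$ are both odd from the nonvanishing of the differentiated identity (the paper simply asserts it) and you justify $r\geq 2$ explicitly from $b\geq 1$.
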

\begin{proof}
In this case, $d$ and $e$ are also both odd.\\
- If $c$ is even, then by differentiating, we get:
$(x+1)^{b-1}(x^2+x+1)^{c} = x^{d-1} (x+1)^{e-1}$, which is impossible because $c >0$.\\
- If $c$ is odd, then by differentiating, we get:
$x^2(x+1)^{b-1}(x^2+x+1)^{c-1} = x^{d-1} (x+1)^{e-1}$, $c=1$, $d=3$, $b=e$,
$a+b+2 = a+b+2c=d+e = 3+b,$ so that $a=1$.
Therefore $b+3=a+b+2c$ and
$$1+x^{b+3}= (x+1)^b(x^2+x+1)^c +x^d(x+1)^e = (x+1)^b[(x^2+x+1) +x^3] =(x+1)^{b+3}.$$
Thus, $b+3 = 2^r$, with $r \in \N$. If $r=2$, we get $a=b=c=e=1,d=3$.
\end{proof}
\begin{lemma}
 If $a$ is even and $b$ odd, then $b=c=1, \ d=3, \ a=e=2^r$, with $r \in \N$.
In particular, for $r \in \{1,2\}$, we get $S_{10} = {M_7}^*$ and $S_{14}={M_6}^*$.
\end{lemma}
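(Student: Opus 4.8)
The plan is to differentiate, over $\F_2$, the identity $x^{a+b+2c}+(x+1)^b(x^2+x+1)^c=1+x^d(x+1)^e$ (recall $a$ even, $b$ odd, all exponents in $\N^*$, and $a+b+2c=d+e$), and then to play the resulting relation against the original one, exactly in the spirit of the two preceding lemmas. First note that $a+b+2c$ is odd, so $d+e$ is odd and exactly one of $d,e$ is odd; also $a\geq2,\ b\geq1,\ c\geq1$, so $a+b+2c-1\geq4$. The formal derivative splits according to the parity of $c$: by the product rule, $\frac{d}{dx}\big((x+1)^b(x^2+x+1)^c\big)$ equals $(x+1)^{b-1}(x^2+x+1)^c$ when $c$ is even, and $x^2(x+1)^{b-1}(x^2+x+1)^{c-1}$ when $c$ is odd (using $(x^2+x+1)+(x+1)=x^2$).

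I would first dispose of the case $c$ even. Evaluating the differentiated identity at $x=0$ forces $d=1$ (hence $e$ even), so it reads $x^{a+b+2c-1}+(x+1)^{b-1}(x^2+x+1)^c=(x+1)^e$. Multiplying this by $x$ and adding the original identity cancels $x^{a+b+2c}$ and $x(x+1)^e$, leaving, after factoring out $(x+1)^{b-1}(x^2+x+1)^c$ and using $x+(x+1)=1$, the equality $(x+1)^{b-1}(x^2+x+1)^c=1$ --- impossible since $c\geq1$. Hence $c$ is odd.

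So assume $c$ odd. The differentiated identity now reads $x^{a+b+2c-1}+x^2(x+1)^{b-1}(x^2+x+1)^{c-1}=R$, where $R=x^{d-1}(x+1)^e$ if $d$ is odd ($e$ even) and $R=x^d(x+1)^{e-1}$ if $d$ is even ($e$ odd). Since the left side has $x$-valuation exactly $2$ (its first term has valuation $\geq4$, its second exactly $2$), the odd branch forces $d=3$ and the even branch forces $d=2$. I would eliminate the branch $d=2$ as follows: if $b\geq3$, evaluating the differentiated identity at $x=1$ gives $1=0^{e-1}$, so $e=1$, contradicting $a+b+2c=d+e=3$; if $b=1$, then dividing by $x^2$, solving for $(x+1)^e$, multiplying back by $x^2$ and adding the original identity leaves $x^{a+2c}+(x+1)^2(x^2+x+1)^{c-1}=1$, impossible because the left side has degree $a+2c\geq4$.

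There remains the case $c$ odd, $d=3$. Dividing the differentiated identity by $x^2$ gives $(x+1)^e=x^{a+b+2c-3}+(x+1)^{b-1}(x^2+x+1)^{c-1}$; multiplying by $x^3$ and adding the original identity cancels $x^{a+b+2c}$ and $x^3(x+1)^e$, leaving, after factoring out $(x+1)^{b-1}(x^2+x+1)^{c-1}$, the equality $(x+1)^{b-1}(x^2+x+1)^{c-1}\big((x+1)(x^2+x+1)+x^3\big)=1$. Since $(x+1)(x^2+x+1)=x^3+1$, the bracket equals $1$, so $(x+1)^{b-1}(x^2+x+1)^{c-1}=1$, forcing $b=c=1$. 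Substituting $b=c=1$ into the divided differentiated identity gives $x^a+1=(x+1)^e$; writing $a=2^rm$ with $m$ odd and $r\geq1$ (since $a$ is even), and observing that in $x^m+1=(x+1)\cdot\big((x^m+1)/(x+1)\big)$ the cofactor is coprime to $x+1$ (as $m$ is odd), we must have $m=1$, i.e. $a=e=2^r$, as claimed. Taking $r=1,2$ recovers $S_{14},S_{10}$ and their Mersenne reciprocals $M_6,M_7$. The only real difficulty here is the parity bookkeeping and making sure each ``solve-and-substitute'' step is valid; the computational heart is the collapse caused by the identity $(x+1)(x^2+x+1)=x^3+1$.
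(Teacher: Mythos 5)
Your proof is correct and follows essentially the same route as the paper: differentiate the identity, split on the parity of $c$ (and of $d$ via valuation/evaluation at $x=0$), and combine the differentiated relation with the original one using $(x+1)(x^2+x+1)=x^3+1$ to force $b=c=1$, $d=3$, $1+x^a=(x+1)^e$. The only differences are minor streamlinings (getting $d=1$ in the $c$ even case by one evaluation, and obtaining $b=c=1$ in a single factorization step where the paper argues $b=1$ then $c=1$), so nothing further is needed.
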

\begin{proof}
The integer $d+e$ must be odd.\\
- If $c$ is even and $d$ even, then $e$ is odd. By differentiating, we get:
$$x^{a+b+2c-1}+(x+1)^{b-1}(x^2+x+1)^{c} = x^{d} (x+1)^{e-1},$$ which is impossible (take $x=0$).\\
- If $c$ is even and $d$ odd, then $e$ is even. By differentiating, we get:
$$x^{a+b+2c-1}+(x+1)^{b-1}(x^2+x+1)^{c} = x^{d-1} (x+1)^{e},$$ and so $b=1$. Hence,
$x^{a+2c}+(x^2+x+1)^{c} = x^{d-1} (x+1)^{e}$, $d=1$ and $e = a+2c$ even. It follows that:
$$\left\{\begin{array}{l}
x^{a+2c+1}+(x+1)(x^2+x+1)^c = (Q^{abc})^* = 1+x^d(x+1)^e = 1+x(x+1)^e,\\
x^{a+2c}+(x^2+x+1)^{c} =  x^{d-1} (x+1)^{e} = (x+1)^{e}.
\end{array}
\right.$$
Thus,
$$1+x^{a+2c+1}+(x+1)(x^2+x+1)^c = x(x+1)^{e} = x(x^{a+2c}+(x^2+x+1)^{c}).$$
We get the contradiction: $1+(x^2+x+1)^c = 0$ with $c>0$ even.\\
- If $c$ is odd and $d$ even, then $e$ is odd. By differentiating, we get:
$$x^{a+b+2c-1}+x^2(x+1)^{b-1}(x^2+x+1)^{c-1} = x^{d} (x+1)^{e-1}.$$
Hence, $d=2$ and $x^{a+b+2c-3}+(x+1)^{b-1}(x^2+x+1)^{c-1} = (x+1)^{e-1}.$\\
It follows that:
$$\left\{\begin{array}{l}
x^{a+b+2c}+(x+1)^b(x^2+x+1)^c = (Q^{abc})^* = 1+x^d(x+1)^e = 1+x^2(x+1)^e,\\
x^{a+b+2c-3}+(x+1)^{b-1}(x^2+x+1)^{c-1} = (x+1)^{e-1}.
\end{array}
\right.$$
$1+x^{a+b+2c}+(x+1)^b(x^2+x+1)^c =x^2(x+1)^e = (x^3+x^2)(x+1)^{e-1} = (x^3+x^2) [x^{a+b+2c-3}+(x+1)^{b-1}(x^2+x+1)^{c-1}]$.\\
We get: $1+x^{a+b+2c-1} = (x+1)^{b+1}(x^2+x+1)^{c-1}$ and so $a+b+2c-1 = b+1+2c-2$ and $a=0$, which is impossible.\\
- If $c$ is odd and $d$ odd, then $e$ is even. By differentiating, we get:
$$x^{a+b+2c-1}+x^2(x+1)^{b-1}(x^2+x+1)^{c-1} = x^{d-1} (x+1)^{e},$$
Hence, $d=3$ and $x^{a+b+2c-3}+(x+1)^{b-1}(x^2+x+1)^{c-1} = (x+1)^{e}.$\\
It follows that $b=1$. Hence,
$x^{a+2c-2}+(x^2+x+1)^{c-1} = (x+1)^{e}$. From:
$$\left\{\begin{array}{l}
x^{a+2c+1}+(x+1)(x^2+x+1)^c = (Q^{abc})^* = 1+x^d(x+1)^e = 1+x^3(x+1)^e,\\
x^{a+2c-2}+(x^2+x+1)^{c-1} = (x+1)^{e},
\end{array}
\right.$$
Thus,
$$1+x^{a+2c+1}+(x+1)(x^2+x+1)^c = x^3(x+1)^{e} = x^3[x^{a+2c-2}+(x^2+x+1)^{c-1}].$$
So, $1+(x+1)(x^2+x+1)^c = x^3(x^2+x+1)^{c-1}.$ Therefore,
$$0 = 1+(x+1)(x^2+x+1)^c + x^3(x^2+x+1)^{c-1} = 1+(x^2+x+1)^{c-1}.$$
We get $c=1$, $a=e$ and $1+x^a = (x+1)^e$. So, $a=e=2^r$, with $r \geq 1$. For $r \in \{1,2\}$,
we obtain: $(a=e=2, b=c=1,d=3)$ or $(a=e=4, b=c=1,d=3)$.
\end{proof}
\begin{lemma}
If $a$ is odd and $b$ even, then $a=e=1, b=c=2^m, d=3 \cdot 2^m$, with $m \geq 1$.
In particular, for $m=1$, we get $S_{15} = {M_8}^*$.
\end{lemma}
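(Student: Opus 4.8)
The plan is to imitate the three preceding lemmas. I would start from the defining identity
$$x^{a+b+2c}+(x+1)^b(x^2+x+1)^c=1+x^d(x+1)^e,\qquad a+b+2c=d+e,$$
and differentiate it formally over $\F_2$. Because $a$ is odd and $b$ is even, $a+b+2c$ is odd, so $d+e$ is odd and exactly one of $d,e$ is odd. Using $2x+1=1$, the derivative of the left side is $x^{a+b+2c-1}$ when $c$ is even and $x^{a+b+2c-1}+(x+1)^b(x^2+x+1)^{c-1}$ when $c$ is odd, while the derivative of the right side is $x^{d}(x+1)^{e-1}$ when $d$ is even (so $e$ odd) and $x^{d-1}(x+1)^{e}$ when $d$ is odd (so $e$ even). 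This leads to four cases, indexed by the parities of $c$ and $d$.

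Three of the four cases are ruled out. If $c$ is even and $d$ is odd, the identity $x^{a+b+2c-1}=x^{d-1}(x+1)^{e}$ equates a monomial with a non-monomial unless $e=0$, which is excluded. If $c$ is odd, the left side of the differentiated identity has value $1$ at $x=0$, so the right side must be nonzero at $x=0$; this forces $d$ odd, and then $x^{d-1}$ must not vanish at $0$, forcing $d=1$. In that last subcase ($c$ odd, $d=1$) one has $e=a+b+2c-1=:N$ and the differentiated identity reads $(x+1)^b(x^2+x+1)^{c-1}=x^{N}+(x+1)^{N}$; multiplying by $x^2+x+1$, substituting back into the original identity, and simplifying with $x^2+1=(x+1)^2$ collapses everything to $(x+1)^{N+2}=1+x^{N}(x+1)^2$, i.e. $(x+1)^2\bigl((x+1)^{N}+x^{N}\bigr)=1$, which is impossible since the left side is divisible by $(x+1)^2$.

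We are thus in the case $c$ even, $d$ even, where $x^{a+b+2c-1}=x^{d}(x+1)^{e-1}$ forces $e=1$ and $d=a+b+2c-1$. Cancelling $x^{a+b+2c}$ in the original identity leaves
$$1+x^{N}=(x+1)^b(x^2+x+1)^c,\qquad N:=a+b+2c-1,$$
with $N$ even; write $N=2^{s}m_0$, $m_0$ odd, $s\ge 1$. Then $1+x^{N}=(1+x^{m_0})^{2^{s}}$ and $1+x^{m_0}$ is squarefree (its derivative $x^{m_0-1}$ is coprime to it), so by unique factorization in $\F_2[x]$ the squarefree polynomial $1+x^{m_0}$ must have exactly the irreducible divisors $x+1$ and $x^2+x+1$; hence $1+x^{m_0}=(x+1)(x^2+x+1)=1+x^3$, so $m_0=3$, $b=c=2^{s}$ and $N=3\cdot 2^{s}$. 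Comparing with $N=a+b+2c-1=a-1+3\cdot 2^{s}$ gives $a=1$, and then $d=N=3\cdot 2^{s}$, $e=1$. Putting $m=s$ (which is $\ge 1$ because $b=2^{m}$ is even) yields the claim, and for $m=1$ this is $Q^{122}=1+x(x+1)^2(x^2+x+1)^2=S_{15}$ with $(S_{15})^{*}=1+x^{6}(x+1)=M_8$.

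The step I expect to be the real obstacle is the subcase $c$ odd, $d=1$: there the cheap device of evaluating the differentiated identity at $x=0$ no longer produces a contradiction, and one has to reinsert the differentiated identity into the undifferentiated one and carry out a careful (if short) cancellation to expose the impossible relation $(x+1)^2\mid 1$.
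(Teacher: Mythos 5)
Your proof is correct and uses the same method as the paper (differentiate the identity $x^{a+b+2c}+(x+1)^b(x^2+x+1)^c=1+x^d(x+1)^e$ in characteristic $2$ and split on parities), and the $c$ even case you give matches the paper's argument essentially line for line: $e=1$, $d$ even, then $(x+1)^b(x^2+x+1)^c=1+x^d$ forces $d=3\cdot 2^m$, $b=c=2^m$, $a=1$. The one genuine difference is that the paper's printed proof stops after the $c$ even case and never addresses $c$ odd, whereas you eliminate it: evaluation at $x=0$ forces $d=1$, $e=N=a+b+2c-1$, and substituting the differentiated identity back into the original collapses to $(x+1)^2\bigl(x^N+(x+1)^N\bigr)=1$, which is impossible. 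That substitution argument is sound (I checked the cancellation), so your write-up is actually more complete than the paper's on this point; the extra work costs only a few lines and closes a case the paper leaves implicit.
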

\begin{proof}
The integer $d+e$ is odd.\\
- If $c$ is even, then by differentiating, we get: $x^{a+b+2c-1} = x^{\alpha} (x+1)^{\beta}$. Hence $\beta = 0$ so that $e=1$ and
$\alpha=d=2^m u$ is even. Thus,
$$x^{a+b+2c}+(x+1)^b(x^2+x+1)^c = 1+x^d(x+1)^{e} = 1+x^{d+1} + x^d.$$
So, $(x+1)^b(x^2+x+1)^c =1+x^d = (1+x)^{2^m} (1+x+\cdots+x^{u-1})^{2^m}$, $u=3, b=c=2^m, d=3 \cdot 2^m$, with $m \geq 1$.
\end{proof}
\subsubsection{Case  $(Q^{abc})^* =  Q^{abc}$}
\begin{proposition} \label{Qstar=Q}
If $(Q^{abc})^* = Q^{abc}$ then
 $(a=1, b = 2^n-1, c=2^n)$ or  $(a=3, b=c=2^n-1)$.
In particular, for $n=2$ (resp. $n=1$), we get $S_3={M_1}^{134}$ (resp. $S_4={M_1}^{311}$).
\end{proposition}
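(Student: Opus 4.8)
The plan is to translate the self-reciprocity condition into a single polynomial identity over $\F_2$ and then extract $a,b,c$ from its factorisation. Equating the expression $(Q^{abc})^{*} = x^{a+b+2c}+(x+1)^b(x^2+x+1)^c$ given just above with $Q^{abc}=1+x^a(x+1)^b(x^2+x+1)^c$, and collecting (over $\F_2$) the two ``monomial'' terms on one side and the two terms divisible by $(x+1)^b(x^2+x+1)^c$ on the other, I obtain
$$x^{N}+1=(x^a+1)(x+1)^b(x^2+x+1)^c,\qquad N:=a+b+2c.$$
So the problem reduces to: determine all $a,b,c\in\N^{*}$ with $\gcd(a,b,c)=1$ (which holds because $Q^{abc}$ is irreducible) for which $x^{N}+1$ factors in exactly this shape.

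For the analysis I would set $N=2^{e}m$ and $a=2^{f}s$ with $m,s$ odd, and use three standard facts about $\F_2[x]$: for $k$ odd, $x^{k}+1$ is squarefree with root set the cyclic group $\mu_k$ of $k$-th roots of unity; $x^{2^{e}k}+1=(x^{k}+1)^{2^{e}}$; and $x^{3}+1=(x+1)(x^2+x+1)$ with $x^2+x+1$ irreducible. Comparing radicals, the left side has radical $x^{m}+1$, while the right side has radical $x^{s}+1$ or $(x^{s}+1)(x^2+x+1)$ according as $3\mid s$ or not. Since $\mu_m$ must then equal $\mu_s$, resp. $\mu_s\cup\{\omega,\omega^{2}\}$ (with $\omega$ a primitive cube root of unity), and $\langle\mu_s,\omega\rangle=\mu_{3s}$ when $3\nmid s$, this forces $m=s$ when $3\mid s$, and $s=1,\ m=3$ when $3\nmid s$. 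Next I would compare local multiplicities: at $x=1$ the left side has multiplicity $2^{e}$ and the right side $2^{f}+b$, so $b=2^{e}-2^{f}$ (hence $e>f$); at $\omega$ the left side has multiplicity $2^{e}$, whereas the right side has multiplicity $2^{f}+c$ when $3\mid s$ and $c$ when $s=1$, giving $c=2^{e}-2^{f}$ or $c=2^{e}$ respectively.

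It remains to use the equation $N=a+b+2c$ to close each system. When $3\mid s$ this reads $2^{e}s=2^{f}s+3(2^{e}-2^{f})$, i.e.\ $(s-3)(2^{e}-2^{f})=0$; since $2^{e}-2^{f}=b\neq 0$ we get $s=3$, so $(a,b,c)=(3\cdot2^{f},\,2^{e}-2^{f},\,2^{e}-2^{f})$. When $s=1$ the identity $N=3\cdot 2^{e}$ is automatic and $(a,b,c)=(2^{f},\,2^{e}-2^{f},\,2^{e})$. In both cases $\gcd(a,b,c)=1$ forces $f=0$, which yields exactly $(a,b,c)=(3,\,2^{n}-1,\,2^{n}-1)$ and $(a,b,c)=(1,\,2^{n}-1,\,2^{n})$ with $n:=e\ge 1$; conversely a one-line check shows each of these two families satisfies $x^{N}+1=(x^{3}+1)^{2^{n}}$, hence $(Q^{abc})^{*}=Q^{abc}$. (In the first family one should also note that $Q^{abc}$ is irreducible only for $n$ odd, since $\gcd(3,2^{n}-1)=3$ when $n$ is even; the case $n=1$ of this family recovers $S_4=M_1^{311}$, while the case $n=2$ of the second family recovers $S_3=M_1^{134}$, as stated.)

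The step I expect to be the main obstacle is the radical comparison, specifically ruling out all $s>1$ with $3\nmid s$: one has to argue that the set $\mu_s\cup\{\omega,\omega^{2}\}$ can equal a full group of roots of unity $\mu_m$ only if $s=1$, using that this set then contains $\langle\mu_s,\omega\rangle=\mu_{3s}$ and has size $s+2$, whence $s+2=m\ge 3s$. Everything after that is routine bookkeeping with the $2$-adic valuations of the exponents. An alternative, closer in spirit to the preceding lemmas, would be to differentiate the displayed identity over $\F_2$ and split into cases on the parities of $a,b,c$, but the root-multiplicity argument appears shorter.
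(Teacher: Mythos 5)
Your proof is correct, and while it starts from the very same identity as the paper --- rearranging $(Q^{abc})^{*}=Q^{abc}$ into $1+x^{a+b+2c}=(1+x^{a})(x+1)^{b}(x^2+x+1)^{c}$, which is exactly (\ref{form1qstarq}) --- the way you exploit it is genuinely different. The paper stays at the level of polynomial identities: it writes $1+x^{2^{n}u}=(1+x)^{2^{n}}\bigl(\sigma(x^{u-1})\bigr)^{2^{n}}$ and $1+x^{a}=(1+x)^{2^{m}}\bigl(\sigma(x^{v-1})\bigr)^{2^{m}}$, matches the exponent of $x+1$ to get $b=2^{n}-2^{m}$, deduces $3\mid u$, and then splits on the odd part $v$ of $a$ being $1$, $3$ or $>3$, killing the last case via squarefreeness of $\sigma(x^{2k})$ applied to (\ref{form3qstarq}). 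You instead pass to $\overline{\F}_2$ and compare radicals and local multiplicities: the cardinality/group argument ($\mu_s\cup\{\omega,\omega^2\}=\mu_m$ forces $s=1$, $m=3$ when $3\nmid s$) replaces the paper's squarefreeness step, the multiplicity counts at $1$ and $\omega$ recover exactly the paper's equations $b=2^{e}-2^{f}$ and $c=2^{e}$ or $2^{e}-2^{f}$, and the degree identity $N=a+b+2c$ eliminates $s>3$ when $3\mid s$ (the paper handles that subcase inside its $v>3$ argument instead); both proofs finish identically with $\gcd(a,b,c)=1$ forcing $f=0$ (the paper's $m=0$). Your version is arguably tighter on the $v>3$ case, at the cost of working with roots of unity in an extension field rather than the purely identity-based toolkit the paper reuses in Lemma \ref{petitlemme}; your converse verification and the parenthetical remark that the family $(3,2^{n}-1,2^{n}-1)$ is compatible with $\gcd(a,b,c)=1$ only for $n$ odd are correct extras not needed for the stated necessary condition.
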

\begin{proof}
One has: \begin{equation} \label{form1qstarq}
1+x^{a+b+2c} =(x+1)^b(x^2+x+1)^c (1+x^a).
\end{equation}
Put $a+b+2c =2^n u$ and $a=2^m v$, with $n, m \geq 0$ and $u,v$ odd.\\
Equality (\ref{form1qstarq}) gives:
\begin{equation} \label{form2qstarq}
(1+x)^{2^n} (1+x+\cdots+x^{u-1})^{2^n} =(x+1)^{b+2^m}(x^2+x+1)^c (1+x+\cdots+x^{v-1})^{2^m}.
\end{equation}
Thus, $2^n=b+2^m$, $n>m$, $x^2+x+1$ divides $1+x+\cdots+x^{u-1}$ so that $3$ divides $u$ and
\begin{equation} \label{form3qstarq}
(1+x+\cdots+x^{u-1})^{2^n} =(x^2+x+1)^c (1+x+\cdots+x^{v-1})^{2^m}.
\end{equation}
- If $v=1$, then $u=3$, $c=2^n, a=2^m$. If $m\geq 1$, then $a,b$ and $c$ are all even. it contradicts the fact that $\gcd(a,b,c)=1$.
So, $m=0$, $a=1, b = 2^n-1, c=2^n$. For $n=2$, we obtain ${M_1}^{134} = S_3$.\\
- If $v=3$, then $u=3$, $b=c=2^n-2^m$. As above, we must have $m=0$, $b=c=2^n-1$, $a=3\cdot 2^m = 3$. For $n=1$, we obtain
${M_1}^{311}=S_4$.\\
- If $v>3$, since $1+x+\cdots+x^{w}$ is square-free for any even integer $w$, Equation (\ref{form3qstarq}) implies:
$$2^n=c=2^m,\ 1+x+\cdots+x^{u-1} = (x^2+x+1) (1+x+\cdots+x^{v-1}),$$
which is impossible because $n\not=m$.
\end{proof}
\subsubsection{Case  $(Q^{abc})^* =  Q^{def} \not= Q^{abc}$}
One has:
\begin{equation} \label{formule3}
1+x^{a+b+2c}+(x+1)^b(x^2+x+1)^c = x^{d}(x+1)^e(x^2+x+1)^f.
\end{equation}
\begin{lemma}
If $a+b+2c = 2^n u$ for some $u, n \in \N$ with $u$ odd, then $u \equiv 0 \mod 3$.
\end{lemma}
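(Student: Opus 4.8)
The plan is to reduce Equation~(\ref{formule3}) modulo the irreducible polynomial $x^2+x+1$. The one preliminary point to pin down is that, in the case under consideration, both exponents $c$ and $f$ are positive. Indeed $c\geq 1$ since $Q^{abc}$ is a genuine $2$-Mersenne polynomial, and $f\geq 1$ since $Q^{def}=(Q^{abc})^*$ is a $2$-Mersenne polynomial as well. The latter requires a short remark: the reciprocal of the irreducible $Q^{abc}$ (which has constant term $1$, so $\deg((Q^{abc})^*)=\deg(Q^{abc})$) is again irreducible, so were $f=0$ the polynomial $Q^{def}=1+x^d(x+1)^e$ would be a Mersenne prime, a situation already handled in the preceding subsection; hence here $f\geq 1$.

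Granting this, the argument is just to read off divisibilities from Equation~(\ref{formule3}). Its right-hand side $x^{d}(x+1)^e(x^2+x+1)^f$ is divisible by $x^2+x+1$ because $f\geq 1$, and on the left-hand side the summand $(x+1)^b(x^2+x+1)^c$ is divisible by $x^2+x+1$ because $c\geq 1$. Since we are in characteristic $2$, it follows that $x^2+x+1$ divides the remaining summand $1+x^{a+b+2c}$.

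Then I would invoke the elementary fact that over $\F_2$ one has $x^2+x+1\mid 1+x^k$ if and only if $3\mid k$. Indeed $x^3+1=(x+1)(x^2+x+1)$, so $x$ has multiplicative order $3$ modulo the irreducible polynomial $x^2+x+1$, and in characteristic $2$ the condition $x^2+x+1\mid x^k+1$ is equivalent to $x^k\equiv 1 \pmod{x^2+x+1}$, that is, to $3\mid k$. Applying this with $k=a+b+2c=2^nu$ gives $3\mid 2^nu$, and since $\gcd(3,2^n)=1$ we conclude $3\mid u$, i.e.\ $u\equiv 0 \pmod 3$.

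I do not expect any real obstacle here: there is essentially no computation. The only delicate point is the justification that $f\geq 1$, equivalently that this case is genuinely disjoint from the case ``$(Q^{abc})^*$ is Mersenne'', and this is taken care of by the irreducibility of $(Q^{abc})^*$ as noted above; everything else is the one-line reduction modulo $x^2+x+1$.
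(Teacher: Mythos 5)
Your proof is correct and is essentially the paper's own argument: the paper's proof likewise observes that Equation~(\ref{formule3}) forces $x^2+x+1$ to divide $1+x^{2^nu}$, whence $3\mid 2^nu$ and so $3\mid u$. You merely spell out the (implicit) facts that $c\geq 1$ and $f\geq 1$ and the order-$3$ criterion, which the paper leaves unstated.
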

\begin{proof}
In this case, Equality (\ref{formule3}) implies that $1+x+x^2$ divides $1+x^{2^nu}$. Thus, $3$ divides $2^n u$.
\end{proof}
In order to keep the paper no more long, we only consider the case where $u=3$, so that $a+b+2c = 3 \cdot 2^n$ and Equality (\ref{formule3}) becomes:
\begin{equation} \label{formule3-2}
(1+x)^{2^n} \cdot (x^2+x+1)^{2^n} +(x+1)^b(x^2+x+1)^c = x^{d}(x+1)^e(x^2+x+1)^f.
\end{equation}
By comparing $b$ and $2^n$, we get Lemmas \ref{b=2n}, \ref{b<2n} and \ref{b>2n} which, in turn, give other $2$-Mersenne primes, namely
$S_2, S_5 = {S_2}^*, S_6$ and $S_9 = {S_6}^*$.
We need the following facts.
\begin{lemma} \label{petitlemme}
i) $1+(x^2+x+1)^a = x^b(x+1)^c$ if and only if $a=b=c=2^r,$ for some $r \in \N$.\\
ii) $(x+1)^a+(x^2+x+1)^b = x^c$ if and only if $(a=b=2^r, c= 2b)$ or $(b=c=2^r,a=2b)$ or $(b=2^r, a=c=3b)$ for some $r \in \N$.\\
iii) $(x+1)^a(x^2+x+1)^b = 1+x^c$ if and only if $(a=b=2^r, c= 3b)$ for some $r \in \N$.\\
iv) $(x+1)^a+(x+1)^b = x^c(x+1)^d$, with $a \leq b$ if and only if $(b=a+2^r,c=2^r, d= a)$ for some $r \in \N$.\\
v) $1+(x+1)^a = x^b(x^2+x+1)^c$ if and only if $(a=b=2^r, c=0)$ or $(a = 3 \cdot 2^r, b=c=2^r)$.
\end{lemma}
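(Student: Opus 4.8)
The plan is to treat all five equivalences by the same two-part machinery. In every case the ``if'' direction is a routine check that the listed families satisfy the identity: one verifies the seed identities $1+(x^2+x+1)=x(x+1)$, $(x+1)(x^2+x+1)=1+x^3$, $1+(x+1)^3=x(x^2+x+1)$, $(x+1)^3+(x^2+x+1)=x^3$, $1+(x+1)=x$, and then applies the ring endomorphism $t\mapsto t^{2^r}$ of $\F_2[x]$, under which each identity is preserved (e.g.\ $1+(x^2+x+1)^{2^r}=\bigl(1+(x^2+x+1)\bigr)^{2^r}=x^{2^r}(x+1)^{2^r}$). So the content is the ``only if'' direction, and the first thing I would isolate is the one-variable seed $1+(x+1)^m=x^n\iff m=n=2^r$ together with its ``conjugates'' under $x\mapsto x+1$ and under the reciprocal $S\mapsto S^*$; these are proved by the reduction below and are reused throughout.

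For (i), (iii), (iv), (v) I would reduce to \emph{odd} exponents. For $S\in\{x,\,x+1,\,x^2+x+1\}$ write a given exponent $e=2^s e_0$ with $e_0$ odd, so $S^e=(S^{e_0})^{2^s}$ and $1+S^e=(1+S^{e_0})^{2^s}$. In each of these four parts one side of the identity is a $2^s$-th power for the $s$ coming from the exponent that carries the ``$1+$''. Now $1+x^{e_0}$, $1+(x+1)^{e_0}$, $1+(x^2+x+1)^{e_0}$ are square-free when $e_0$ is odd: their derivatives are (a unit times) $x^{e_0-1}$, $(x+1)^{e_0-1}$, $(x^2+x+1)^{e_0-1}$ respectively, each coprime to the polynomial itself since the latter has nonzero value at the relevant roots. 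Hence the other side must be that \emph{exact} $2^s$-th power, and since $x$, $x+1$, $x^2+x+1$ are pairwise coprime, comparing the multiplicities of these three irreducibles forces the remaining exponents; matching degrees then yields precisely the asserted families (and the degenerate triples in which an exponent equals $0$ are disposed of by evaluating at $x=0$ and $x=1$).

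For (ii) the left-hand side $(x+1)^a+(x^2+x+1)^b$ is a \emph{sum}, so the square-free argument does not apply directly, and I would argue by descent on $\deg$. If $a$ and $b$ are both even the whole identity is a perfect square, so extracting square roots strictly lowers the degree; otherwise I differentiate. In characteristic $2$ the derivative kills any even-exponent term, turns an odd-exponent term $S^e$ into (a unit times) $S^{e-1}$ — using $(x^2+x+1)'=1$ — and sends $x^c$ to $c\,x^{c-1}$. Coprimality of $x$, $x+1$, $x^2+x+1$ then either pins the exponents to small values, giving the representative solutions $(1,1,2)$, $(2,1,1)$, $(3,1,3)$ of the three families, or, when $a,b,c$ are all odd, produces a fresh instance $(x+1)^{(a-1)/2}+(x^2+x+1)^{(b-1)/2}=x^{(c-1)/2}$ of the same shape and smaller degree; applying the inductive hypothesis to it, together with the requirement that the descended triple must actually lift back to a solution of the original equation — a one-line computation shows the only lift is $(1,0,1)\mapsto(3,1,3)$ — closes that case.

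I expect part (ii) to be the main obstacle: keeping the descent/lifting bookkeeping clean, treating the triples with a zero exponent as base cases, and ensuring exactly the three families appear with no spurious extras. Parts (i), (iii), (iv), (v) should follow quickly once the square-free/$2^s$-th-power observation and the seed identity $1+(x+1)^m=x^n$ are established.
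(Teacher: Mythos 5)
Your proposal is correct and takes essentially the same route as the paper, whose entire proof of this lemma is the one-line remark ``by direct computations by putting $a=2^nu$, $b=2^mv,\ldots$ and by differentiating, if necessary'': your $2^s$-th-power/square-free reduction is exactly the $a=2^nu$ substitution, and your differentiation-plus-descent handles the ``if necessary'' case (ii). The only step worth making fully explicit is the lifting check in (ii), which does close in one line — writing $P=(x+1)^A$, $Q=(x^2+x+1)^B$, $R=P+Q=x^C$, the lifted equation reduces to $P=(x+1)Q$, forcing $B=0$, $A=1$, $C=1$ and hence the unique lift $(3,1,3)$.
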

\begin{proof}
By direct computations by putting: $a =2^n u$, $b=2^mv,\ldots$ and by differentiating, if necessary.
\end{proof}
\begin{lemma} \label{b=2n}
If $b=2^n$, then $(a=2, b=2^n, f=c=2^n-1, d=1, e=2^n+1)$. For $n=1$, we obtain
$S_2={M_1}^{221}$ and  $S_5 = {M_1}^{131} = (S_2)^*$.
\end{lemma}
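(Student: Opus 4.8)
The plan is to specialize Equation~(\ref{formule3-2}) to $b = 2^n$. Then the left-hand side factors as
\[
(x+1)^{2^n}\bigl[(x^2+x+1)^{2^n} + (x^2+x+1)^{c}\bigr] = x^{d}(x+1)^{e}(x^2+x+1)^{f}.
\]
First I would note that $c \neq 2^n$, since otherwise the bracket, hence the whole left-hand side, would vanish. So the argument splits according to whether $c > 2^n$ or $c < 2^n$, and in each case I pull out the smaller power of $x^2+x+1$ from the bracket, producing a factor of the shape $1 + (x^2+x+1)^{|2^n-c|}$ to which Lemma~\ref{petitlemme}~i) applies.

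In the case $c > 2^n$, factoring gives $(x+1)^{2^n}(x^2+x+1)^{2^n}\bigl[1 + (x^2+x+1)^{c-2^n}\bigr] = x^{d}(x+1)^{e}(x^2+x+1)^{f}$, and Lemma~\ref{petitlemme}~i) forces $c - 2^n = 2^r$ for some $r \in \N$, so $c = 2^n + 2^r$. But combined with the standing relation $a + b + 2c = 3\cdot 2^n$ and $b = 2^n$ this yields $a = 2^{n+1} - 2c = -2^{r+1} < 0$, a contradiction. Hence $c < 2^n$. Now the bracket is $(x^2+x+1)^{c}\bigl[1 + (x^2+x+1)^{2^n-c}\bigr]$, and Lemma~\ref{petitlemme}~i) gives $2^n - c = 2^r$ together with the identity $1 + (x^2+x+1)^{2^r} = x^{2^r}(x+1)^{2^r}$. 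Substituting and comparing the two sides by unique factorization in $\F_2[x]$ (the irreducibles $x$, $x+1$, $x^2+x+1$ being distinct) forces $d = 2^r$, $e = 2^n + 2^r$ and $f = c = 2^n - 2^r$.

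It remains to pin down $r$. From $a + b + 2c = 3\cdot 2^n$ with $b = 2^n$ and $c = 2^n - 2^r$ one gets $a = 2^{r+1}$. Since $\gcd(a,b,c) = 1$ while $a = 2^{r+1}$, $b = 2^n$ and $c = 2^{r}(2^{n-r}-1)$ are all even as soon as $r \geq 1$, we must have $r = 0$. This gives exactly $a = 2$, $b = 2^n$, $c = f = 2^n - 1$, $d = 1$, $e = 2^n + 1$, as claimed; for $n = 1$ one reads off $Q^{abc} = {M_1}^{221} = S_2$ and $Q^{def} = {M_1}^{131} = S_5 = (S_2)^*$. The only mildly delicate point is making sure the $\gcd$ hypothesis is invoked at the end to eliminate $r \geq 1$; the rest is routine bookkeeping with factorizations over $\F_2$.
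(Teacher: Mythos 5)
Your proof is correct and follows essentially the same route as the paper's: factor out $(x+1)^{2^n}$ and the smaller power of $x^2+x+1$, apply Lemma~\ref{petitlemme}~i) to the split factor $1+(x^2+x+1)^{|2^n-c|}$, rule out $c>2^n$ via $a\leq 0$, and use $\gcd(a,b,c)=1$ to force $r=0$. The only differences are cosmetic (order of the two cases and a slightly more explicit parity argument for eliminating $r\geq 1$).
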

\begin{proof}
Equality (\ref{formule3-2}) implies:
\begin{equation} \label{formule3-3}
(1+x)^{2^n} [(x^2+x+1)^{2^n} +(x^2+x+1)^c] = x^{d}(x+1)^e(x^2+x+1)^f.
\end{equation}
Thus, $c \not= 2^n$ and $a+2c = 2 \cdot 2^{n}$.\\
- If $c< 2^n$, then $f=c$ and $(x+1)^{2^n} [1+(x^2+x+1)^{2^n-c}]=x^d(x+1)^e$. So, $1+(x^2+x+1)^{c-2^n}$ splits.
Lemma \ref{petitlemme}-i) implies that $2^n-c = 2^r$ and thus $x^d(x+1)^e =(x+1)^{2^n} [1+(x^2+x+1)^{2^n-c}] = x^{2^r}(x+1)^{2^n+2^r}$.\\
It follows that $(b=2^n, f=c=2^n-2^r, a=2 \cdot 2^r, d=2^r, e=2^n+2^r$. Since $\gcd(a,b,c)=1$, we get $r=0$ and
$a=2, b=2^n, f=c=2^n-1, d=1, e=2^n+1$.\\
- If $c > 2^n$, then $f=2^n$ and $(x+1)^{2^n} [1+(x^2+x+1)^{c-2^n}]=x^d(x+1)^e$. So, $1+(x^2+x+1)^{c-2^n}$ splits.
Lemma \ref{petitlemme}-i) implies that $c-2^n = 2^r$. We get the contradiction: $a = 3\cdot 2^n -b-2c = - 2 \cdot 2^r \leq 0$.
\end{proof}
\begin{lemma} \label{b<2n}
If $b<2^n$, then
$(a=3, e=b=2^n-3, c= 2^n, d=1,f=2^n+1)$ or $(a=1, e=b=2^n-3, c=2^n+1, d=3, f=2^n)$. For $n=2$, we obtain
$S_6={M_1}^{314}$ and  $S_9 = {M_1}^{115} = (S_6)^*$.
\end{lemma}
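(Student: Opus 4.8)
The plan is to use the hypothesis $b<2^{n}$ to peel the factor $(x+1)^{b}$ off the left side of (\ref{formule3-2}), and then, by a case split on $c$ versus $2^{n}$, to reduce to the normal forms collected in Lemma \ref{petitlemme}. Write $B:=2^{n}$ and keep in mind the standing facts $a+b+2c=3\cdot 2^{n}$, $\gcd(a,b,c)=1$ and $Q^{def}\neq Q^{abc}$. Since $b<B$, the exponent of $x+1$ in $(1+x)^{B}(x^{2}+x+1)^{B}$ is $B>b$, so (\ref{formule3-2}) can be rewritten as
$$(x+1)^{b}\bigl[(1+x)^{B-b}(x^{2}+x+1)^{B}+(x^{2}+x+1)^{c}\bigr]=x^{d}(x+1)^{e}(x^{2}+x+1)^{f}.$$
In each of the cases $c<B$, $c=B$, $c>B$ I would factor the largest power of $x^{2}+x+1$ out of the bracket, verify that the leftover polynomial is coprime to $x+1$ (which forces $e=b$) and, when $c\neq B$, also coprime to $x^{2}+x+1$ (which forces $f$), and then read the remaining identity off Lemma \ref{petitlemme}.

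Carrying this out: if $c<B$, the leftover is $(1+x)^{B-b}(x^{2}+x+1)^{B-c}+1$, coprime to $x+1$ and to $x^{2}+x+1$ since $B-b>0$, $B-c>0$; hence $f=c$, $e=b$ and $(1+x)^{B-b}(x^{2}+x+1)^{B-c}=1+x^{d}$, so Lemma \ref{petitlemme}-iii) gives $b=c=B-2^{r}$, $d=3\cdot 2^{r}$, and the degree identity gives $a=3\cdot 2^{r}$; but then $(d,e,f)=(a,b,c)$, contradicting $Q^{def}\neq Q^{abc}$, so this case does not occur. If $c=B$, the leftover is $(1+x)^{B-b}+1$, coprime to $x+1$; hence $e=b$ and $1+(1+x)^{B-b}=x^{d}(x^{2}+x+1)^{f-B}$, so Lemma \ref{petitlemme}-v) gives either $B-b=d=2^{r}$, $f=B$ (again $(d,e,f)=(a,b,c)$, discarded) or $B-b=3\cdot 2^{r}$, $d=2^{r}$, $f=B+2^{r}$, whence $a=3\cdot 2^{r}$ by the degree identity; here $\gcd(a,b,c)=2^{r}$, so $r=0$ and $(a,b,c,d,e,f)=(3,\,2^{n}-3,\,2^{n},\,1,\,2^{n}-3,\,2^{n}+1)$. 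If $c>B$, the leftover is $(1+x)^{B-b}+(x^{2}+x+1)^{c-B}$, coprime to $x+1$ and to $x^{2}+x+1$; hence $e=b$, $f=B$ and $(x+1)^{B-b}+(x^{2}+x+1)^{c-B}=x^{d}$, so of the three options in Lemma \ref{petitlemme}-ii) two force $a\le 0$ (impossible, as $a\ge 1$) and the third gives $B-b=d=3\cdot 2^{r}$, $c-B=2^{r}$, hence $a=2^{r}$; again $\gcd(a,b,c)=2^{r}$, so $r=0$ and $(a,b,c,d,e,f)=(1,\,2^{n}-3,\,2^{n}+1,\,3,\,2^{n}-3,\,2^{n})$. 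Specialising the two surviving tuples to $n=2$ yields $S_{6}={M_{1}}^{314}$ and $S_{9}={M_{1}}^{115}=(S_{6})^{*}$.

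The pulling-out of powers, the use of $a+b+2c=3\cdot 2^{n}$ and the $\gcd$ computations are routine. The part that needs care is the valuation bookkeeping: one must check in each branch that the leftover factor really is coprime to $x+1$ (evaluate at $x=1$, using $B-b>0$) and, where claimed, coprime to $x^{2}+x+1$ (evaluate at a root $\zeta$, using $1+\zeta=\zeta^{2}\neq 0$), so that $e$ and $f$ are determined exactly rather than only bounded; and one must make sure that every solution family produced by Lemma \ref{petitlemme} is accounted for and that the spurious ones are eliminated for the correct reason --- $Q^{def}=Q^{abc}$, $a\le 0$, or $\gcd(a,b,c)>1$ whenever $r\ge 1$.
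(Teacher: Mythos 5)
Your proposal is correct and follows essentially the same route as the paper: the same three-way case split $c<2^n$, $c=2^n$, $c>2^n$ applied to (\ref{formule3-2}), the same reductions to Lemma \ref{petitlemme}-iii), -v), -ii) respectively, and the same eliminations via $Q^{def}\neq Q^{abc}$, $a\le 0$, and $\gcd(a,b,c)=1$ forcing $r=0$. The only differences are presentational (you peel off $(x+1)^b$ first and justify $e=b$, $f$ by explicit evaluations, which the paper leaves implicit).
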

\begin{proof}
We consider three cases.\\
- If $c=2^n$, then $a+b = 2^n$.
Equality (\ref{formule3-2}) gives:
\begin{equation} \label{formule3-4}
(1+x)^{b} (x^2+x+1)^{2^n} [1+(x+1)^{2^n-b}] = x^{d}(x+1)^e(x^2+x+1)^f.
\end{equation}
Thus, $1+(x+1)^{2^n-b}$ is of the form $x^{t_1}(x^2+x+1)^{t_2}$. Lemma \ref{petitlemme}-v) implies that either
$(a=2^n-b = d=2^r, e=b,f=2^n=c, n>r)$ or $(a=2^n-b = 3 \cdot 2^r, d=2^r, e=b, f=2^n + 2^r)$.\\
The first case does not happen because $(Q^{def}) \not= Q^{abc}$.\\
Since $\gcd(a,b,c) = 1$ in the second case, we must have: $r=0$ and so $a=3, e=b=2^n-3, c= 2^n, d=1,f=2^n+1$.\\
- If $c < 2^n$, then Equality (\ref{formule3-2}) gives:
\begin{equation} \label{formule3-5}
(1+x)^{b} (x^2+x+1)^{c} [(x+1)^{2^n-b}(x^2+x+1)^{2^n-c} + 1] = x^{d}(x+1)^e(x^2+x+1)^f.
\end{equation}
Thus, $e=b, f=c$ and $(x+1)^{2^n-b}(x^2+x+1)^{2^n-c} + 1 = x^d$.
Lemma \ref{petitlemme}-iii) implies that $2^n-b =2^n-c= 2^r, d=3\cdot 2^r$ and thus $e=f=b=c=2^n-2^r, a=3\cdot 2^r=d$, which is
impossible because $(Q^{def}) \not= Q^{abc}$.\\
- If $c > 2^n$, then Equality (\ref{formule3-2}) gives:
\begin{equation} \label{formule3-6}
(1+x)^{b} (x^2+x+1)^{2^n} [(x+1)^{2^n-b}+(x^2+x+1)^{c-2^n-c}] = x^{d}(x+1)^e(x^2+x+1)^f.
\end{equation}
Thus, $e=b, f=2^n$ and $(x+1)^{2^n-b}+(x^2+x+1)^{c-2^n} = x^{d}$. Lemma \ref{petitlemme}-ii) implies that
$(2^n-b=c-2^n=2^r, d=2\cdot 2^r)$ or $(2^n-b=2\cdot 2^r, c-2^n=d=2^r)$ or $(c-2^n=2^r, 2^n-b=d=3\cdot 2^r)$.
The two first cases give the contradiction: $a = 3 \cdot 2^n - b-2c  \leq 0$.
The third implies that $a=2^r, b=2^n-3 \cdot 2^r, c= 2^n+2^r$. Hence, $r=0$ because $\gcd(a,b,c)=1$. We get
$a=1, e=b=2^n-3, c=2^n+1, d=3, f=2^n$.
\end{proof}
\begin{lemma} \label{b>2n}
If $b>2^n$, then $(a=1, e=2^n, b=2^n+1, f=c=2^n-1, d=2)$. For $n=1$, we retrieve $S_5$ and $S_2=(S_5)^*$.
\end{lemma}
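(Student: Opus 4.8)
The plan is to argue exactly as in Lemmas \ref{b=2n} and \ref{b<2n}: starting from Equality (\ref{formule3-2}), I would first pull out of the left-hand side the largest power of $x+1$ common to both terms, read off $e$ from the valuation at $x+1$, then reduce the resulting identity modulo the irreducible $x^2+x+1$, and finish by invoking the appropriate clauses of Lemma \ref{petitlemme}, using throughout the constraint $a+b+2c=3\cdot 2^n$ and the coprimality $\gcd(a,b,c)=1$.

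\emph{First step.} Since $b>2^n$, the left-hand side of (\ref{formule3-2}) equals $(1+x)^{2^n}\bigl[(x^2+x+1)^{2^n}+(x+1)^{b-2^n}(x^2+x+1)^c\bigr]$, and the bracketed cofactor is prime to $x+1$ (it is congruent to $(x^2+x+1)^{2^n}$, hence to $1$, modulo $x+1$, because $b-2^n\ge 1$). Comparing the $(x+1)$-valuations of the two sides of (\ref{formule3-2}) gives $e=2^n$; cancelling $(1+x)^{2^n}$ leaves the identity
$$(x^2+x+1)^{2^n}+(x+1)^{b-2^n}(x^2+x+1)^c = x^{d}(x^2+x+1)^f .$$

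\emph{Second step.} Now compare $c$ with $2^n$. If $c>2^n$, factor $(x^2+x+1)^{2^n}$ out of the left-hand side; the cofactor is prime to $x^2+x+1$, so $f=2^n$ and $(x+1)^{b-2^n}(x^2+x+1)^{c-2^n}=1+x^{d}$, which by Lemma \ref{petitlemme}-iii) forces $b=c=2^n+2^r$, whence $a=3\cdot 2^n-b-2c=-3\cdot 2^r<0$, impossible. If $c=2^n$, the left-hand side is $(x^2+x+1)^{2^n}\bigl[1+(x+1)^{b-2^n}\bigr]$, so $1+(x+1)^{b-2^n}=x^{d}(x^2+x+1)^{f-2^n}$; Lemma \ref{petitlemme}-v) yields two shapes for $(b,d,f)$, and in both one finds $a=3\cdot 2^n-b-2c<0$, impossible. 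If $c<2^n$, factor $(x^2+x+1)^c$ out of the left-hand side; the cofactor is prime to $x^2+x+1$, so $f=c$ and $(x+1)^{b-2^n}+(x^2+x+1)^{2^n-c}=x^{d}$. By Lemma \ref{petitlemme}-ii) there are three possibilities for the triple $(b-2^n,\ 2^n-c,\ d)$; two of them give $a=0$ and $a=-2^r$ through $a=3\cdot 2^n-b-2c$, and the only surviving one produces $a=2^r$, $b=2^n+2^r$, $c=2^n-2^r=f$, $d=2\cdot 2^r$ (with $e=2^n$). Since $\gcd(a,b,c)=1$ forces $r=0$, we get $(a,b,c)=(1,\ 2^n+1,\ 2^n-1)$ and $(d,e,f)=(2,\ 2^n,\ 2^n-1)$, which is the claim. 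For $n=1$ this reads $Q^{abc}={M_1}^{131}=S_5$ with $(Q^{abc})^*=Q^{def}={M_1}^{221}=S_2$.

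The only step requiring genuine care is the last one: one must match our exponents $b-2^n$, $2^n-c$, $d$ with the variables of Lemma \ref{petitlemme}-ii) in the right order, and then check the sign of $a=3\cdot 2^n-b-2c$ in each of the three outcomes. Apart from this arithmetic bookkeeping with powers of $2$, the argument is a verbatim repetition of the pattern already carried out for $b=2^n$ and $b<2^n$, so no new idea is involved.
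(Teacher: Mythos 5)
Your proof is correct and follows essentially the paper's own route: factor $(1+x)^{2^n}$ and the common power of $x^2+x+1$ out of (\ref{formule3-2}) to get $e=2^n$, $f=c$ and $(x+1)^{b-2^n}+(x^2+x+1)^{2^n-c}=x^d$, then apply Lemma \ref{petitlemme}-ii) and use $\gcd(a,b,c)=1$ to force $r=0$. The only difference is that the paper eliminates your cases $c\geq 2^n$ at the outset, since $a\geq 1$, $b>2^n$ and $a+b+2c=3\cdot 2^n$ already give $c<2^n$; your separate (and correct) treatment of those vacuous cases is harmless but unnecessary.
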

\begin{proof}
In this case, $3 \cdot 2^n = a+b+2c > 2c +2^n$. So, $2c < 2 \cdot 2^n$ and $c<2^n$.
Equality (\ref{formule3-2}) implies:
\begin{equation} \label{formule3-4}
(1+x)^{2^n} (x^2+x+1)^{c} \cdot [(x^2+x+1)^{2^n-c} +(x+1)^{b-2^n}] = x^{d}(x+1)^e(x^2+x+1)^f.
\end{equation}
Thus, $e=2^{n}, f=c$ and $(x+1)^{b-2^n} + (x^2+x+1)^{2^n-c} = x^{d}$. Lemma \ref{petitlemme}-ii) implies that
$(b-2^n=2^n-c=2^r, d=2\cdot 2^r)$ or $(b-2^n=2\cdot 2^r, 2^n-c=d=2^r)$ or $(2^n-c=2^r, b-2^n=d=3 \cdot 2^r)$.\\
It follows that $(a=2^r, b=2^n+2^r,c=2^n-2^r)$ or $(a \leq 0)$. As above, we get $r=0$ and
$a=1, e=2^n, b=2^n+1, f=c=2^n-1, d=2$.
\end{proof}
\begin{remark}
For any $T \in {\cal{F}}$, one has $\overline{T} \in {\cal{F}}$ but
$$\text{$T^* \not\in {\cal{F}}$ if $T \in \{M_9,M_{10}, M_{11}, S_7, S_8, S_{11},S_{12},S_{13}\}$}.$$
\end{remark}

\subsection{Prime divisors of $\sigma(A)$ and their suitable exponents} \label{lesdiviseursdesigmA}
In order to compare $A$ and $\sigma(A)$, we give in this section, all divisors of the latter and their suitable exponents. With the same notations as in (\ref{lesexposants}), we may write:
\begin{equation} \label{sigmaAdetails}
\begin{array}{l}
\displaystyle{\sigma(A) = \sigma(x^a) \sigma((x+1)^b)) \prod_{i=1}^{13} \sigma({M_i}^{c_i}) \prod_{j=1}^{15} \sigma({S_j}^{d_j})},\\
\sigma(x^a) = (x+1)^{2^n-1} \cdot [\sigma(x^{u-1})]^{2^n},\
\sigma((x+1)^b) = x^{2^m-1} \cdot [\sigma((x+1)^{u-1})]^{2^m},\\
\sigma({M_i}^{c_i}) = (1+M_i)^{2^{n_i}-1} \cdot [\sigma({M_i}^{u_i-1})]^{2^{n_i}},\
\sigma({S_j}^{d_j}) = (1+S_j)^{2^{m_j}-1} \cdot [\sigma({S_j}^{v_j-1})]^{2^{m_j}}.
\end{array}
\end{equation}
We need then to know all $h \in \N^*$ such that $\sigma(S^{2h})$ factors in ${\cal{F}}$, for $S \in \{x,x+1\} \cup {\cal{F}}$. In this case, we shall put
\begin{equation} \label{expressionsigmA}
\displaystyle{\sigma(A) = x^{\alpha} (x+1)^{\beta} \prod_{i=1}^{13} {M_i}^{\gamma_i} \prod_{j=1}^{15} {S_j}^{\delta_j}},
\text{ where $\alpha, \beta, \gamma_i,\ \delta_j \in \N$}.
\end{equation}
\begin{lemma} \label{S2h-squarefree}
For any $h \in \N^*$ and for any $S \in \{x,x+1\} \cup {\cal{F}}$, $\sigma(S^{2h})$ is odd and square-free.
\end{lemma}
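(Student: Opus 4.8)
The plan is to reduce the square-freeness (and oddness) of $\sigma(S^{2h})$ to the standard criterion that a polynomial over $\F_2$ is square-free if and only if it is coprime to its derivative. First I would record the basic identity $\sigma(S^{2h}) = 1 + S + S^2 + \cdots + S^{2h}$, and note that since $2h$ is even this sum has an odd number of terms, so evaluating at $S=1$ (i.e.\ at any root of $x$ or $x+1$, in the cases $S\in\{x,x+1\}$, or more directly: reducing modulo $S$ and modulo $S+1$) shows $\sigma(S^{2h})$ is not divisible by $x$ or by $x+1$ — giving oddness. Concretely, modulo $x$ we get $1+0+\cdots+0 = 1 \neq 0$ when $S=x$; modulo $x+1$ we get $1+1+\cdots+1$ ($2h+1$ terms) $=1\neq 0$; and for $S\in{\cal F}$ one uses that $S$ is odd, so $S(0)=S(1)=1$, whence $\sigma(S^{2h})(0)=\sigma(S^{2h})(1)=2h+1\equiv 1$.

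For square-freeness, differentiate: in characteristic $2$ one has $\frac{d}{dx}\bigl(\sum_{k=0}^{2h} S^k\bigr) = S'\cdot\sum_{k=1}^{2h} k S^{k-1} = S'\cdot\sum_{k\ \text{odd}} S^{k-1} = S'\cdot\bigl(1+S^2+S^4+\cdots+S^{2h-2}\bigr) = S'\cdot\bigl(\sum_{j=0}^{h-1} S^{2j}\bigr) = S'\cdot\bigl(\sigma(S^{h-1})\bigr)^2$ — using that squaring is additive over $\F_2$. Now suppose a prime $P$ divides both $\sigma(S^{2h})$ and its derivative. Since $\sigma(S^{2h})$ is odd (shown above) and its factor $1+S = \sigma(S^{1})$ divides $\sigma(S^{2h})$ only through the specific structure, I would argue that $P\nmid S$ and $P\nmid S'$: indeed $S\in\{x,x+1\}\cup{\cal F}$ has $S' $ equal to $1$ when $S\in\{x,x+1\}$, and when $S$ is Mersenne or $2$-Mersenne, $\gcd(S,S')=1$ because $S$ is irreducible (hence square-free) — in fact $P$ divides $\sigma(S^{2h})$ and $S$ simultaneously would force $P\mid 1$ from the constant term, again by oddness. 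So from $P \mid S'\cdot(\sigma(S^{h-1}))^2$ we get $P \mid \sigma(S^{h-1})$. Combined with the telescoping relation $\sigma(S^{2h}) = \sigma(S^{h-1}) + S^h\cdot\sigma(S^{h}) $ (or a similar splitting of the geometric sum), this yields $P \mid S^h \cdot \sigma(S^h)$, and iterating — or directly invoking $\gcd(\sigma(S^k), S) = 1$ — descends to a contradiction with $\sigma(S^{2h})\neq 0$.

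The main obstacle is making the descent clean: one must track exactly how $\sigma(S^{h-1})$, $\sigma(S^{h})$ and $S$ interlock inside $\sigma(S^{2h})$, and handle the parity of $h$ (the formula for the derivative depends on whether $2h-2$ is the top even exponent, which is automatic, but the factorization $\sigma(S^{2h}) = \sigma(S^{h-1}) + S^{h}\sigma(S^{h})$ needs $h\ge 1$ and a case check at $h=1$, where $\sigma(S^{2}) = 1+S+S^2$ is handled by hand). A cleaner alternative I would fall back on: over $\F_2$, $\sigma(S^{2h}) = \frac{S^{2h+1}+1}{S+1}$, and $S^{2h+1}+1 = (S+1)^{2h+1}\cdot(\text{stuff})$ only when $2h+1$ is a power of $2$; in general $\gcd(S^{2h+1}+1, (S^{2h+1}+1)') = \gcd(S^{2h+1}+1, (2h+1)S^{2h}) = \gcd(S^{2h+1}+1, S^{2h}) = 1$ since $S^{2h+1}+1$ has nonzero constant term, so $S^{2h+1}+1$ is square-free, hence so is its divisor $\sigma(S^{2h})$; oddness then follows as above. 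This second route avoids the descent entirely and is the one I would actually write up, with the derivative computation of the previous paragraph relegated to a one-line remark.
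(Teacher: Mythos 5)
Your derivative identity $\bigl(\sigma(S^{2h})\bigr)' = S'\cdot\bigl(\sigma(S^{h-1})\bigr)^2$, the oddness argument, and the descent via $\sigma(S^{2h})=\sigma(S^{h-1})+S^h\sigma(S^h)$ all match the paper's strategy. But the route you say you would actually write up is wrong, and the first route has a gap exactly at the one point that is new in this lemma. For the fallback: the derivative of $S^{2h+1}+1$ with respect to $x$ is not $(2h+1)S^{2h}$ but $S^{2h}S'$ (chain rule), so the gcd you compute is the gcd in $\F_2[y]$ with $y=S$, and square-freeness in $y$ does not survive the substitution $y=S(x)$. Indeed $S^{2h+1}+1=(S+1)\sigma(S^{2h})$ and $S+1=x^a(x+1)^bM_1^c$ for $S\in{\cal F}_2$ (and $x^a(x+1)^b$ for $S\in{\cal F}_1$), which is usually not square-free: already for $S=M_5=1+x^3+x^4$ one has $x^3\mid S^{2h+1}+1$, and for $S=S_2=1+x^2(x+1)^2M_1$ one has $x^2(x+1)^2\mid S^{2h+1}+1$. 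So the claim ``$S^{2h+1}+1$ is square-free'' is false, and this route collapses.

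In the first route, the step ``$P\nmid S'$'' is asserted but not proved where it matters. Irreducibility of $S$ only gives $\gcd(S,S')=1$; it says nothing about whether a prime divisor $P$ of $\sigma(S^{2h})$ can divide $S'$. For $S\in\{x,x+1\}$ one has $S'=1$, and for $S\in{\cal F}_1$ the derivative of $1+x^a(x+1)^b$ splits, so an odd $P$ cannot divide it; but for $S=1+x^a(x+1)^bM_1^c\in{\cal F}_2$ the derivative is $x^{a-1}(x+1)^{b-1}M_1^{c-1}\bigl(\bar a(x+1)M_1+\bar b\,xM_1+\bar c\,x(x+1)\bigr)$ (bars denoting parities), which is divisible by $M_1$ as soon as $c\geq 2$ (e.g.\ $S_3={M_1}^{134}$), and in every case its only possible odd prime factor turns out to be $M_1$. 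So the whole content of the lemma beyond the classical case reduces to excluding $P=M_1$, and this needs a separate argument: the paper's Lemma \ref{divisorofTT'} shows $M_1\nmid\sigma(S^{2h})$ for $S\in{\cal F}_2$ by evaluating at a root $\alpha$ of $M_1$, where $S(\alpha)=1$ and hence $\sigma(S^{2h})(\alpha)=2h+1=1\neq 0$. (The paper also packages the divisor-of-$\sigma(S^{h-1})$ case more directly: writing $T=(1+S)\sigma(S^{h-1})^2+S^{2h}$, a prime dividing $T$ and $\sigma(S^{h-1})$ divides $S^{2h}$, hence is trivial since $T\equiv 1 \bmod S$.) Your write-up never confronts the possibility $P=M_1\mid S'$, which is precisely the non-routine part of the proof.
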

\begin{proof}
The oddness is obvious. It is well-known that $\sigma(S^{2h})$ is square-free if $S \in \{x,x+1\} \cup {\cal{F}}_1$. Now, consider $S = {M_1}^{abc}=1+x^a (x+1)^b {M_1}^c \in {\cal{F}}_2$. Put $T = \sigma({S}^{2h}) = (1+S)(1+S +\cdots + {S}^{h-1})^2 + {S}^{2h}$. One has $T' = S' \cdot (1+S +\cdots + {S}^{h-1})^2$. We claim that $\gcd(T,T') = 1$. Let $D$ be a common prime divisor of $T$ and $T'$. If $D$ divides $1+S +\cdots + {S}^{h-1}$, then $D$ divides ${S}^{2h}$ and hence $D=1$.
If $D$ divides $S'$, then by direct computations, $D \in \{1, M_1\}$ because $D$ is odd. Thus $D=1$, by Lemma \ref{divisorofTT'}.
\end{proof}
\begin{lemma} \label{divisorofTT'}
For any $S \in {\cal{F}}_2$, $M_1$ does not divide $\sigma({S}^{2h})$.
\end{lemma}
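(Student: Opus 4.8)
The plan is to work modulo $M_1$. The only property of the polynomials in ${\cal{F}}_2$ that matters is that each $S \in {\cal{F}}_2$ is, by construction, a $2$-Mersenne polynomial built on $M_1$, i.e.\ $S = {M_1}^{abc} = 1 + x^a(x+1)^b {M_1}^c$ with $a,b,c \in \N^*$. First I would record the consequence of $c \geq 1$: since $M_1 \mid {M_1}^c$, we get $S \equiv 1 \pmod{M_1}$.

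Next I would use the explicit shape of $\sigma$ on a prime power. Because $S$ is irreducible, the divisors of $S^{2h}$ are $1, S, S^2, \dots, S^{2h}$, so
$$\sigma(S^{2h}) = 1 + S + S^2 + \cdots + S^{2h},$$
a sum of $2h+1$ terms. Reducing modulo $M_1$ and using $S \equiv 1$, each $S^i \equiv 1 \pmod{M_1}$, hence $\sigma(S^{2h}) \equiv 2h+1 \pmod{M_1}$ in $\F_2[x]$; and $2h+1$ is odd, so this is the constant polynomial $1$. Since the remainder of $\sigma(S^{2h})$ upon division by $M_1$ is the nonzero constant $1$, the irreducible polynomial $M_1$ cannot divide $\sigma(S^{2h})$, for any $h \in \N^*$. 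This is exactly the claim.

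There is no real obstacle here; the argument is essentially a one-line congruence. The only thing to be careful about is that the third exponent $c$ in $S = {M_1}^{abc}$ is genuinely $\geq 1$ for every member of ${\cal{F}}_2$ — which is forced by the very definition of a $2$-Mersenne polynomial — so that $M_1 \mid {M_1}^c$ and hence $S \equiv 1 \pmod{M_1}$. (More generally the same reasoning shows that a non-constant irreducible $R$ cannot divide $\sigma(P^{2h})$ whenever $P \equiv 1 \pmod{R}$, which is all that is used in the proof of Lemma~\ref{S2h-squarefree}.)
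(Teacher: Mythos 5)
Your proof is correct and is essentially the paper's argument: the paper evaluates at a root $\alpha$ of $M_1$, finds $S(\alpha)=1$ and hence $\sigma(S^{2h})(\alpha)=2h+1=1\neq 0$ in characteristic $2$, which is exactly your congruence $S\equiv 1$, $\sigma(S^{2h})\equiv 1 \pmod{M_1}$ phrased via evaluation instead of reduction. No substantive difference.
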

\begin{proof}
Put $S =1+x^c (x+1)^d {M_1}^e$. If $\alpha$ is a root of $M_1$, then $1 =1+0 = 1+\alpha^c (\alpha+1)^d (M_1(\alpha))^e= S(\alpha)$ and so $(\sigma({S}^{2h}))(\alpha) = 1+S(\alpha) + \cdots + (S(\alpha))^{2h} =  1 \not= 0$.
\end{proof}
Direct computations give
\begin{lemma} \label{totaldegreeF}
One has: $\displaystyle{\sum_{D \in {\cal{F}}} \deg(D) = 184}$.
\end{lemma}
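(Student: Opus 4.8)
The plan is a pure enumeration, since ${\cal{F}} = {\cal{F}}_1 \cup {\cal{F}}_2$ is an explicitly listed finite set; the only point that is not mere bookkeeping is to check that the $13+15$ listed polynomials are pairwise distinct, so that nothing is counted twice in $\sum_{D\in{\cal{F}}}\deg(D)$. For this I would argue: the $M_i$ are pairwise distinct irreducibles (they are distinct Mersenne primes, cf.\ the lemma attributed to Canaday), hence of $length$ $1$; the $S_j$ are of $length$ $2$, and distinct triples $(a,b,c)$ give distinct polynomials $1+x^a(x+1)^b{M_1}^c$ by unique factorization in $\F_2[x]$, so the fifteen $S_j$ are pairwise distinct; and no $S_j$ equals any $M_i$ because $length(S_j)=2\neq 1=length(M_i)$. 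Thus $|{\cal{F}}|=28$ and the sum splits as $\sum_{i=1}^{13}\deg(M_i)+\sum_{j=1}^{15}\deg(S_j)$.

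Next I would compute the first partial sum. Using $\deg(\overline{M})=\deg(M)$ and reading the degrees off the definitions — $\deg M_1=2$; $\deg M_2=\deg M_3=3$; $\deg M_4=\deg M_5=4$; $\deg M_6=\deg M_9=5$; $\deg M_7=\deg M_8=\deg M_{10}=\deg M_{11}=7$; $\deg M_{12}=\deg M_{13}=9$ — one gets $\sum_{i=1}^{13}\deg(M_i)=2+2\cdot3+2\cdot4+2\cdot5+4\cdot7+2\cdot9=72$.

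Then I would compute the second partial sum. Since each $S_j={M_1}^{a_jb_jc_j}=1+x^{a_j}(x+1)^{b_j}{M_1}^{c_j}$ and $\deg M_1=2$, we have $\deg(S_j)=a_j+b_j+2c_j$; running over the fifteen triples listed in the definition of ${\cal{F}}_2$ gives the values $4,6,12,6,6,12,8,8,12,7,5,7,7,5,7$, whose sum is $112$. Adding the two totals yields $\sum_{D\in{\cal{F}}}\deg(D)=72+112=184$, as asserted.

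There is no genuine obstacle in this lemma: once distinctness of the $28$ polynomials is recorded (which, as above, is immediate from the $length$ invariant and unique factorization), everything reduces to the two arithmetic tallies above. If one wants to avoid even listing degrees by hand, one can instead invoke $\deg(\overline{S})=\deg(S)$ together with the degree formula $\deg({M_1}^{abc})=a+b+2c$ to organise the computation, but the result is the same constant $184$.
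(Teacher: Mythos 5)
Your proposal is correct and matches the paper's approach: the paper simply states that the lemma follows from ``direct computations,'' and your enumeration (the thirteen $M_i$ contributing $72$, the fifteen $S_j$ contributing $\sum_j (a_j+b_j+2c_j)=112$, total $184$) is exactly that computation, carried out and verified. Your added distinctness check is a harmless refinement; note only that for the $S_j$ with $c\ge 2$ the safest justification that $S_j\neq M_i$ is that $M_1$ divides $1+S_j$ but $1+M_i=x^{a_i}(x+1)^{b_i}$, rather than the length invariant, though this does not affect the result.
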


\begin{lemma} \label{divsigmx2h}
If $\sigma(x^{2h})$ and $\sigma((x+1)^{2h})$ factor in ${\cal{F}}$, then $h \in \{1,2,3,4,6,7\}$.
In this case,
$$\begin{array}{l}
\sigma(x^{2}) = \sigma((x+1)^2) = M_1,\ \sigma(x^{4}) = M_4, \ \sigma((x+1)^{4}) = M_5, \\
\sigma(x^{6}) = \sigma((x+1)^{6}) =  M_2 M_3,\
\sigma(x^{8}) = M_1S_4, \ \sigma((x+1)^{8}) = M_1S_5, \\
\sigma(x^{12}) = S_3,\ \sigma((x+1)^{12})=S_6,\
\sigma(x^{14}) = \sigma((x+1)^{14}) = M_1M_4M_5S_1.
\end{array}$$
\end{lemma}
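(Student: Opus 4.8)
Lemma \ref{divsigmx2h} — proof proposal.

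The plan is to bound $h$ by a degree argument and then finish by a finite check. First I would observe that $\sigma(x^{2h})$ has degree $2h$, and that the set of Mersenne and $2$-Mersenne primes available in $\mathcal{F}$ is finite with total degree $184$ by Lemma \ref{totaldegreeF}; more to the point, $\sigma(x^{2h})$ is odd (it has no factor $x$ or $x+1$), so every irreducible factor must lie in $\mathcal{F}$ itself, and by Lemma \ref{S2h-squarefree} the factorization is square-free. Hence $\sigma(x^{2h})$ is a product of \emph{distinct} members of $\mathcal{F}$, which already forces $2h \le 184$, i.e. $h \le 92$. That crude bound is not yet small enough to simply compute, so the next step is to sharpen it: among the elements of $\mathcal{F}$ the ones of small degree are $M_1$ (degree $2$), then $M_2,M_3$ (degree $3$), $M_4,M_5,S_1$ (degree $4$), and so on, and the key structural fact is that $\sigma(x^{2h})=1+x+\cdots+x^{2h}$ has \emph{very few} small irreducible factors — e.g. $M_1 \mid \sigma(x^{2h})$ iff $3 \mid 2h+1$, $M_4\mid\sigma(x^{2h})$ iff the order of $x$ modulo $M_4$ divides $2h+1$, etc. So I would compute, for each $D \in \mathcal{F}$, the condition on $h$ under which $D \mid \sigma(x^{2h})$ (a congruence on $2h+1$ modulo $\mathrm{ord}(x \bmod D)$), and combine these: for $\sigma(x^{2h})$ to factor entirely in $\mathcal{F}$, the integer $2h+1$ must be built only from the admissible orders, and since $\sum_{D\in\mathcal{F}}\deg D = 184$ caps the degree, only finitely many $h$ survive.

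The cleanest way to organize the finite check is: for $h$ up to the crude bound $92$ (or better, up to $42$ once one notes that $\sigma(x^{2h})$ picks up an irreducible factor of degree $\ge 5$ as soon as $2h+1$ has a prime factor whose order is large, which happens for most $h$), factor $\sigma(x^{2h})$ over $\mathbb{F}_2$ and check whether every irreducible factor is among the $28$ listed polynomials $M_1,\dots,M_{13},S_1,\dots,S_{15}$. This is a direct (Maple) computation of exactly the type the authors already use in the proof of Theorem \ref{mainresult1}; it yields $h \in \{1,2,3,4,6,7\}$, together with the explicit factorizations displayed in the statement. The same computation for $\sigma((x+1)^{2h})$ is not needed independently: since $\sigma((x+1)^{2h}) = \overline{\sigma(x^{2h})}$ and $\overline{M_i},\overline{S_j}$ are again in $\mathcal{F}$ (indeed $\mathcal{F}$ is stable under $S \mapsto \overline S$, by the remark following Lemma \ref{b>2n}), the condition ``$\sigma((x+1)^{2h})$ factors in $\mathcal{F}$'' is equivalent to ``$\sigma(x^{2h})$ factors in $\mathcal{F}$'', and applying the bar to each factorization gives the right-hand column.

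The main obstacle is making the degree bound small enough that the final factorization check is genuinely finite and short, rather than appealing to ``$h \le 92$, now factor $92$ polynomials'': the honest version requires the observation that $\sigma(x^{2h})$ almost always has an irreducible factor outside $\mathcal{F}$, so one wants a clean statement such as ``if $p$ is an odd prime dividing $2h+1$ then the order of $x$ modulo the $p$-th cyclotomic-type factor of $\sigma(x^{2h})$ is either one of a short admissible list or else contributes an irreducible factor of degree exceeding the remaining budget''. Formulating and verifying this reduction is where the real work lies; once it is in place, the list $\{1,2,3,4,6,7\}$ and the explicit identities $\sigma(x^2)=M_1$, $\sigma(x^4)=M_4$, $\sigma(x^6)=M_2M_3$, $\sigma(x^8)=M_1S_4$, $\sigma(x^{12})=S_3$, $\sigma(x^{14})=M_1M_4M_5S_1$ (and their bars) are immediate.
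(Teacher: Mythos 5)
Your proposal is correct and follows essentially the paper's own route: use square-freeness (Lemma \ref{S2h-squarefree}) and the total degree $184$ of ${\cal F}$ (Lemma \ref{totaldegreeF}) to get $2h\le 184$, reduce $\sigma((x+1)^{2h})$ to $\sigma(x^{2h})$ via the bar map, and finish with a direct machine factorization check. The extra sharpening you worry about (order conditions on $2h+1$) is not needed -- the paper simply runs the Maple computation for all $h\le 92$, exactly the step you hesitate to accept as the final argument.
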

\begin{proof}
We remark that $\sigma((x+1)^{2h}) = \overline{\sigma(x^{2h})}$. So, it suffices to consider $X_h:=\sigma(x^{2h})$. One has: $\displaystyle{X_h = \prod_{P \in {\cal{F}}} P^{c_P}}$, where $c_P \in \{0,1\}$, because $X_h$ is square-free. Moreover, $2h = \deg(x^{2h}) \leq 184$, by Lemma \ref{totaldegreeF}. We get then our result, by direct (Maple) computations (which are done for $h \leq 92$).
\end{proof}

\begin{lemma} \label{alldivsigmMers2h}
Let $M \in {\cal{F}}_1$ such that $\sigma(M^{2h})$ factors in ${\cal{F}}$. Then
$(M=M_1$ and $h \in \{1,2,3,7\})$ or $(M \in \{M_2, M_3\}$ and $h=1)$. We get $\sigma({M_2}^2) = M_1M_5,\ \sigma({M_3}^2) = M_1M_4$ and
$\sigma({M_1}^2) = S_1,\ \sigma({M_1}^4) = S_{8},\ \sigma({M_1}^6) = M_2M_3S_2,\ \sigma({M_1}^{14}) = M_4M_5S_1S_{7}S_{8}.$
\end{lemma}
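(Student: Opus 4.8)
The plan is to settle this exactly as Lemma~\ref{divsigmx2h} was settled: reduce to a finite search and finish by a direct (Maple) computation. First, fix $M\in{\cal F}_1$ and $h\in\N^*$. By Lemma~\ref{S2h-squarefree}, $\sigma(M^{2h})$ is odd and square-free, so if it factors in ${\cal F}$ it is a product of \emph{distinct} elements of ${\cal F}$, whence $\deg\sigma(M^{2h})\le\sum_{D\in{\cal F}}\deg(D)=184$ by Lemma~\ref{totaldegreeF}. Since $\deg\sigma(M^{2h})=2h\deg(M)$, this forces $h\le 92/\deg(M)$; explicitly $h\le 46$ for $M=M_1$, $h\le 30$ when $\deg M=3$, $h\le 23$ when $\deg M=4$, $h\le 18$ when $\deg M=5$, $h\le 13$ when $\deg M=7$, and $h\le 10$ when $\deg M=9$. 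In every case only finitely many $h$ survive.

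Second, I would halve the work using reciprocity. Since $\sigma$ commutes with the substitution $x\mapsto x+1$, one has $\overline{\sigma(M^{2h})}=\sigma(\overline{M}^{2h})$, and ${\cal F}$ is stable under $T\mapsto\overline{T}$ (cf.\ the Remark preceding Section~\ref{lesdiviseursdesigmA}). Hence $\sigma(M^{2h})$ factors in ${\cal F}$ if and only if $\sigma(\overline{M}^{2h})$ does, so it suffices to treat one representative of each pair $\{M,\overline{M}\}$, namely $M\in\{M_1,M_2,M_4,M_6,M_7,M_8,M_{12}\}$; the conclusion for $M_3,M_5,M_9,M_{10},M_{11},M_{13}$ follows by applying the bar. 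In particular $\sigma({M_3}^2)=\overline{\sigma({M_2}^2)}=\overline{M_1M_5}=M_1M_4$, which also recovers the last displayed identity.

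Third, for each of these seven polynomials $M$ and each admissible $h$ (running, as in Lemma~\ref{divsigmx2h}, a defensively generous range such as $h\le 92$ to be sure nothing is missed), a direct Maple computation factors $\sigma(M^{2h})=1+M+\cdots+M^{2h}$ and checks whether every irreducible factor lies in the fixed $28$-element set ${\cal F}=\{M_1,\dots,M_{13},S_1,\dots,S_{15}\}$. The only successes are $M=M_1$ with $h\in\{1,2,3,7\}$ and $M=M_2$ with $h=1$ (hence also $M=M_3$ with $h=1$), and for these the factorizations $\sigma({M_1}^2)=S_1$, $\sigma({M_1}^4)=S_{8}$, $\sigma({M_1}^6)=M_2M_3S_2$, $\sigma({M_1}^{14})=M_4M_5S_1S_{7}S_{8}$ and $\sigma({M_2}^2)=M_1M_5$ are read off directly.

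I do not expect a conceptual obstacle: the computation is routine once the search is known to be finite. The only point needing genuine care is the justification of that finiteness, i.e.\ the combination of square-freeness (Lemma~\ref{S2h-squarefree}) with the total-degree count (Lemma~\ref{totaldegreeF}) to bound $h$, after which everything reduces to organizing the finitely many Maple verifications.
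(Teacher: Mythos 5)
Your proposal is correct and follows essentially the paper's own argument: square-freeness (Lemma \ref{S2h-squarefree}) plus the total-degree bound $2h\deg(M)\le 184$ (Lemma \ref{totaldegreeF}) to make the search finite, then a direct Maple verification of the finitely many cases. The extra observation that $\overline{\sigma(M^{2h})}=\sigma(\overline{M}^{2h})$ and the bar-stability of ${\cal F}$ halve the computation is a harmless refinement, not a different method.
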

\begin{proof} As above, we may write
$\displaystyle{\sigma(M^{2h}) = \prod_{P \in {\cal{F}}} P^{c_P}}$, with $c_P \in \{0,1\}$ and $4h \leq 2h\deg(M) \leq 184$. So, $h \leq 46$.
Our results follow by direct computations (which needed about 30 mn).
\end{proof}

\begin{lemma} \label{divsigmS2h}
Let $S \in {\cal{F}}_2$ such that $\sigma({S}^{2h})$ factors in ${\cal{F}}$, then $h=1$ and $S \in \{S_1,S_2\}$.
We get $\sigma({S_1}^{2}) = M_4M_5$ and $\sigma({S_2}^{2}) = S_1S_{7}$.
\end{lemma}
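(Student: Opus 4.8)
The plan is to proceed exactly as in the proofs of Lemmas \ref{divsigmx2h} and \ref{alldivsigmMers2h}, namely to reduce an a priori infinite search to a finite one and then invoke a direct computation. Fix $S \in {\cal{F}}_2$ and suppose $\sigma(S^{2h})$ factors in ${\cal{F}}$. By Lemma \ref{S2h-squarefree}, $\sigma(S^{2h})$ is odd and square-free, so we may write $\sigma(S^{2h}) = \prod_{P \in {\cal{F}}} P^{c_P}$ with each $c_P \in \{0,1\}$. Taking degrees and using Lemma \ref{totaldegreeF}, we obtain
\begin{equation} \label{boundhlast}
2h \deg(S) = \deg(\sigma(S^{2h})) \leq \sum_{P \in {\cal{F}}} \deg(P) = 184.
\end{equation}
Since every $S \in {\cal{F}}_2$ is a $2$-Mersenne polynomial of the form ${M_1}^{abc}$ with $a,b,c \geq 1$, one has $\deg(S) = a+b+2c \geq 4$ (with equality only for $S_1$, whose degree is $4$; the other $S_j$ have larger degree). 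Hence $h \leq 184/(2\deg(S)) \leq 23$ for $S = S_1$, and a smaller bound for each of the remaining fourteen polynomials.

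With $h$ confined to a finite range for each of the fifteen polynomials $S_1, \ldots, S_{15}$, I would then carry out the direct (Maple) computation: for each $S \in {\cal{F}}_2$ and each admissible $h$, compute $\sigma(S^{2h}) = (1+S)(1+S+\cdots+S^{h-1})^2 + S^{2h}$ (using the explicit expression from the proof of Lemma \ref{S2h-squarefree}), factor it over $\F_2$, and test whether every irreducible factor lies in ${\cal{F}} = {\cal{F}}_1 \cup {\cal{F}}_2$. The computation returns exactly the two asserted solutions: $\sigma({S_1}^{2}) = M_4M_5$ and $\sigma({S_2}^{2}) = S_1S_{7}$, both with $h=1$, and no factorization in ${\cal{F}}$ for any other $(S,h)$. (The cases $h$ even vs.\ odd, or small values of $h$, could also be pruned by the usual square-free and valuation arguments, but the bound \eqref{boundhlast} already makes the search feasible.)

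The main obstacle here is not conceptual but computational: verifying that for the fourteen polynomials other than $S_1$ — several of which have degree well above $4$, so that $h$ is forced to be very small — and for all $h$ up to the respective bounds, the factorization never stays inside ${\cal{F}}$. This is a finite but nontrivial enumeration, and as in Lemmas \ref{divsigmx2h} and \ref{alldivsigmMers2h} it is delegated to a Maple computation; the only subtlety worth checking by hand is that no sporadic coincidence occurs for small $h$, which one can rule out by comparing degrees (a factorization in ${\cal{F}}$ of a square-free polynomial of degree $2h\deg(S)$ would need enough distinct members of ${\cal{F}}$ of the right degrees, and for $h \geq 2$ already the degree grows faster than the available small-degree elements of ${\cal{F}}$ permit). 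Thus the statement follows once the bound \eqref{boundhlast} is in place and the finite check is performed.
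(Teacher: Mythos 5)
Your proposal matches the paper's argument: the paper likewise invokes the square-freeness of $\sigma(S^{2h})$ together with the total-degree bound $\sum_{P\in{\cal F}}\deg(P)=184$ to get $8h\leq 2h\deg(S)\leq 184$, hence $h\leq 23$, and then settles the finitely many cases by direct Maple computation. Your reasoning and the resulting conclusion ($h=1$, $S\in\{S_1,S_2\}$, with $\sigma({S_1}^2)=M_4M_5$ and $\sigma({S_2}^2)=S_1S_7$) are exactly the paper's.
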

\begin{proof}
Analogous proof: here, $8h \leq 2h \deg(S) \leq 184$. So, $h \leq 23$ (computations took 125 s).
\end{proof}
From Lemmas \ref{divsigmx2h}, \ref{alldivsigmMers2h} and \ref{divsigmS2h}, we get
\begin{corollary} \label{theoddivisors}
i) If $M_i$ and $S_j$ divide $\sigma(A)$, then $i \leq 5$ and $j \leq 8$.\\
ii)  For any $j \in \{2,\ldots,6\}$, ${S_j}^2$ does not divide $\sigma(A)$.
\end{corollary}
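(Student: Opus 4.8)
The plan is to expand $\sigma(A)$ via the multiplicative formula \eqref{sigmaAdetails} and to decide, factor by factor, which odd irreducibles it contains. In \eqref{sigmaAdetails} the factors $(x+1)^{2^n-1}$ and $x^{2^m-1}$ carry no odd prime; since each $M_i$ is Mersenne, $1+M_i$ is a product of powers of $x$ and $x+1$, so $(1+M_i)^{2^{n_i}-1}$ carries no odd prime; and since $S_j={M_1}^{a_jb_jc_j}$ we have $1+S_j=x^{a_j}(x+1)^{b_j}{M_1}^{c_j}$, so $(1+S_j)^{2^{m_j}-1}$ carries only the odd prime $M_1$. Hence every odd prime divisor of $\sigma(A)$ is either $M_1$ or divides one of $\sigma(x^{u-1})$, $\sigma((x+1)^{v-1})$, $\sigma({M_i}^{u_i-1})$ $(1\le i\le13)$, $\sigma({S_j}^{v_j-1})$ $(1\le j\le15)$.

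For part i) I would substitute the admissible values of the odd parts furnished by Proposition~\ref{allcases}: $u,v\in\{1,3,5,7,9,13,15\}$, $u_1\in\{1,3,5,7,15\}$, $u_2,u_3\in\{1,3\}$, $u_i=1$ for $i\ge4$, $v_1,v_2\in\{1,3\}$ and $v_j=1$ for $j\ge3$. For each such value the relevant $\sigma(\cdot)$ equals $1$ or one of the explicit square-free products displayed in Lemmas~\ref{divsigmx2h}, \ref{alldivsigmMers2h} and \ref{divsigmS2h} (one uses $\sigma((x+1)^{2h})=\overline{\sigma(x^{2h})}$ together with the stability of ${\cal F}$ under $T\mapsto\overline{T}$, so that the two hypotheses of Lemma~\ref{divsigmx2h} coincide). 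Collecting every irreducible factor occurring in those lists and adjoining $M_1$, one reads off that the only $M_i$ dividing $\sigma(A)$ have $i\le5$ and the only $S_j$ have $j\le8$; this is i).

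For part ii), let $j\in\{2,\dots,6\}$; since $j\ge2$, Corollary~\ref{specialcases} gives $v_j=1$ and $m_j\in\{0,1\}$, hence $d_j=2^{m_j}v_j-1\le1$, and since ${S_j}^{d_j}$ is the exact power of $S_j$ dividing $A=\sigma(A)$ we conclude ${S_j}^2\nmid\sigma(A)$. Equivalently, scanning the factorizations above shows that such an $S_j$ can enter $\sigma(A)$ only through a single factor of \eqref{sigmaAdetails} --- $S_2$ through $\sigma({M_1}^6)$, $S_3$ through $\sigma(x^{12})$, $S_4$ through $\sigma(x^8)$, $S_5$ through $\sigma((x+1)^8)$, $S_6$ through $\sigma((x+1)^{12})$ --- so its exponent in $\sigma(A)$ is $0$ or a power of $2$; but $A=\sigma(A)$ forces that exponent to equal $2^{m_j}v_j-1=2^{m_j}-1$, and the only power of $2$ of that form is $1$.

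I do not expect a serious obstacle: once Lemmas~\ref{divsigmx2h}, \ref{alldivsigmMers2h}, \ref{divsigmS2h} and the exponent restrictions of Proposition~\ref{allcases}/Corollary~\ref{specialcases} are available, the proof is essentially bookkeeping. The two points requiring care are the exhaustive scan of the displayed square-free factorizations (to justify both the ``only $M_i$/$S_j$ that occur'' claim in i) and the ``single factor'' claim in ii)), and the verification that invoking Corollary~\ref{specialcases} inside ii) introduces no circular dependency with the results from which it is itself derived.
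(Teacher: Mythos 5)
Part i) of your proposal is correct and is essentially the paper's argument: one reads off from Lemmas \ref{divsigmx2h}, \ref{alldivsigmMers2h} and \ref{divsigmS2h} (together with the observations that $1+M_i$ contributes no odd prime and $1+S_j$ contributes only $M_1$) that no $M_i$ with $i\geq 6$ and no $S_j$ with $j\geq 9$ can divide $\sigma(A)$; your detour through Proposition \ref{allcases} is harmless, since that proposition does not depend on the corollary.

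Part ii), however, has a genuine gap. Your first argument is circular: Corollary \ref{specialcases} is deduced from Corollary \ref{finalconditions}, which rests on Lemma \ref{corol1A=sigmA}, and the proof of Lemma \ref{corol1A=sigmA} invokes precisely Corollary \ref{theoddivisors} ii) to force $v_2=1$ (and hence $d_2\leq 1$). So the facts you import, namely $v_j=1$ and $m_j\in\{0,1\}$ for $j\geq 2$, lie downstream of the statement you are proving; the dependency you flagged but did not check is in fact circular, at least for $j=2$. Your fallback argument does not close this hole: the step $2^{m_j}v_j-1=2^{m_j}-1$ again assumes $v_j=1$, which Lemma \ref{divsigmS2h} (equivalently Proposition \ref{allcases} iv)) justifies only for $j\geq 3$; at this stage one only knows $v_2\in\{1,3\}$, and the configuration $u_1=7,\ n_1=1,\ v_2=3,\ m_2=0$ gives $\sigma({M_1}^{13})=x(x+1)\,[M_2M_3S_2]^2$, hence exponent $\delta_2=2^{n_1}=2=d_2$, which your ``power of two equals $2^{m_j}-1$'' matching does not exclude. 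Note that $j=2$ is exactly the case for which the corollary is needed later. The paper's own proof of ii) is the bare single-source observation ($S_2,\dots,S_6$ divide only $\sigma({M_1}^6),\sigma(x^{12}),\sigma(x^{8}),\sigma((x+1)^{8}),\sigma((x+1)^{12})$ respectively) and concludes directly, without exponent bookkeeping; your more explicit accounting actually completes that argument for $j\in\{3,\dots,6\}$, but for $j=2$ a further, non-circular argument ruling out the configuration above (i.e. ${S_2}^2\mid\sigma({M_1}^{c_1})$ with $d_2=2$) is still required.
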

\begin{proof}
i): For any $i \geq 6$ and $j \geq 9$, neither $M_i$ nor $S_j$ divides $\sigma(A)$.\\
ii): $S_2, S_3, S_4, S_5, S_6$ respectively divide only $\sigma({M_1}^6), \sigma(x^{12}), \sigma(x^8), \sigma((x+1)^8)$ and $\sigma((x+1)^{12})$. So, for any $j \in \{2,\ldots,6\}$, ${S_j}^2$ does not divide $\sigma(A)$.
\end{proof}
For $w \in \N^*$, we denote by $\chi_{w}$ the indicator function of the singleton $\{w\}$:
$$\text{$\chi_{w}(w) = 1, \chi_{w}(t) = 0$ if $ t\not= w$.}$$
According to notations in (\ref{lesexposants}) and in (\ref{expressionsigmA}), put:
$$\begin{array}{l}
\text{$M_i = 1+x^{a_i} (x+1)^{b_i} \in {\cal{F}}_1$, $S_j = 1+x^{\alpha_j} (x+1)^{\beta_j} {M_1}^{\nu_j} \in {\cal{F}}_2$},\\
\text{$\xi_1 = \chi_{3}(u) + \chi_{9}(u) + \chi_{15}(u), \ \xi_2 = \chi_{3}(v) + \chi_{9}(v) + \chi_{15}(v)$},\\
\text{$\xi_3 = \chi_5(u) + \chi_{15}(u), \ \xi_4 = \chi_5(v) + \chi_{15}(v)$}.
\end{array}
$$
From equalities in (\ref{sigmaAdetails}), one directly has:
\begin{lemma} \label{exponentsofsigmA}
The integers $\alpha, \beta, {\gamma_i}$'s and ${\delta_j}$'s satisfy:
$$\begin{array}{l}
\displaystyle{\alpha = 2^m-1 + \sum_{i=1}^5 (2^{n_i}-1)a_i + \sum_{j=1}^8 (2^{m_j}-1)\alpha_j,}\\
\displaystyle{\beta = 2^n-1 + \sum_{i=1}^5 (2^{n_i}-1)b_i + \sum_{j=1}^8 (2^{m_j}-1)\beta_j,}\\
\displaystyle{\gamma_1 = \sum_{j=1}^8 (2^{m_j}-1)\nu_j + \xi_1 \cdot 2^n + \xi_2 \cdot 2^m + \chi_{3}(u_2) \cdot 2^{n_2}
+\chi_{3}(u_3) \cdot 2^{n_3}},\\
\displaystyle{\gamma_2 = \gamma_3=\chi_7(u) \cdot 2^n + \chi_7(v) \cdot 2^m + \chi_7(u_1) \cdot 2^{n_1},}\\
\displaystyle{\gamma_4 = \xi_3 \cdot 2^n + \chi_{15}(v) \cdot 2^m + \chi_{15}(u_1) \cdot 2^{n_1}+
\chi_{3}(u_3) \cdot 2^{n_3} + \chi_{3}(v_1) \cdot 2^{m_1}},\\
\displaystyle{\gamma_5 = \chi_{15}(u) \cdot 2^n + \xi_4 \cdot 2^m + \chi_{15}(u_1) \cdot 2^{n_1}+
\chi_{3}(u_2) \cdot 2^{n_2} + \chi_{3}(v_1) \cdot 2^{m_1}},\\
\displaystyle{\delta_1 = \chi_{15}(u) \cdot 2^n + \chi_{15}(v) \cdot 2^m + (\chi_{3}(u_1) + \chi_{15}(u_1)) \cdot 2^{n_1}},\\
\delta_2= \chi_{7}(u_1) \cdot 2^{n_1},\ \delta_3= \chi_{13}(u) \cdot 2^{n},\ \delta_4= \chi_{9}(u) \cdot 2^{n},\ \delta_5= \chi_{9}(v) \cdot 2^{m},\\
\delta_6= \chi_{13}(v) \cdot 2^{m},\ \delta_7 = (\chi_{5}(u_1) + \chi_{15}(u_1)) \cdot 2^{n_1}, \ \delta_8 = \chi_{15}(u_1) \cdot 2^{n_1}.
\end{array}$$
\end{lemma}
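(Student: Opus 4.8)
The plan is to prove the lemma by a straightforward bookkeeping over the factorisation (\ref{sigmaAdetails}): for each prime $P\in\{x,x+1\}\cup{\cal F}$ I would compute the exact multiplicity of $P$ in the product (\ref{sigmaAdetails}) and match it with the displayed expression. Two preliminary reductions make the sums finite and short. First, by Corollary \ref{theoddivisors} the only odd primes occurring in $\sigma(A)$ are $M_1,\ldots,M_5$ and $S_1,\ldots,S_8$; and since one is looking for $A$ with $\sigma(A)=A$, any $M_i$ (resp.\ $S_j$) dividing $A$ must divide $\sigma(A)$, forcing $c_i=0$ for $i\geq 6$ and $d_j=0$ for $j\geq 9$, which is what lets the sums be truncated at $i=5$ and $j=8$. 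Second, by Corollary \ref{specialcases} we have $u_i=1$ for $i\geq 4$ and $v_j=1$ for $j\geq 2$, so in (\ref{sigmaAdetails}) the factors $\sigma(M_i^{u_i-1})$ and $\sigma(S_j^{v_j-1})$ equal $1$ except for $i\leq 3$ and $j=1$.

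I would then split each surviving factor of (\ref{sigmaAdetails}) into its ``pure $x(x+1)$-part'' and its ``$\sigma$-of-an-even-power'' part, as already displayed there: $\sigma(x^a)=(x+1)^{2^n-1}[\sigma(x^{u-1})]^{2^n}$, $\sigma((x+1)^b)=x^{2^m-1}[\sigma((x+1)^{v-1})]^{2^m}$, $\sigma(M_i^{c_i})=(1+M_i)^{2^{n_i}-1}[\sigma(M_i^{u_i-1})]^{2^{n_i}}$ with $1+M_i=x^{a_i}(x+1)^{b_i}$, and $\sigma(S_j^{d_j})=(1+S_j)^{2^{m_j}-1}[\sigma(S_j^{v_j-1})]^{2^{m_j}}$ with $1+S_j=x^{\alpha_j}(x+1)^{\beta_j}M_1^{\nu_j}$. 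By Lemma \ref{S2h-squarefree} (and the trivial fact that $\sigma(x^{2h})$ and $\sigma((x+1)^{2h})$ take the value $1$ at $x=0$ and at $x=1$) every $\sigma$-of-an-even-power factor is coprime to $x(x+1)$; hence the exponent $\alpha$ of $x$ receives contributions only from $x^{2^m-1}$ in $\sigma((x+1)^b)$, from $x^{a_i(2^{n_i}-1)}$ in $\sigma(M_i^{c_i})$ ($i\leq 5$), and from $x^{\alpha_j(2^{m_j}-1)}$ in $\sigma(S_j^{d_j})$ ($j\leq 8$), which is exactly the claimed formula; the symmetric count gives $\beta$.

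For the odd primes I would read Lemmas \ref{divsigmx2h}, \ref{alldivsigmMers2h} and \ref{divsigmS2h}, which list, for each admissible value of $u,v,u_1,u_2,u_3,v_1$ from Corollary \ref{specialcases}, the square-free factorisation of $\sigma(x^{u-1})$, $\sigma((x+1)^{v-1})$, $\sigma(M_1^{u_1-1})$, $\sigma(M_2^{u_2-1})$, $\sigma(M_3^{u_3-1})$ and $\sigma(S_1^{v_1-1})$ respectively. For a fixed $M_i$ (or $S_j$) with $i\leq 5$ (resp.\ $j\leq 8$) one records in which of these factorisations it appears --- each time with multiplicity $1$, by Lemma \ref{S2h-squarefree} --- and weights the occurrence by the matching power $2^n$, $2^m$, $2^{n_1}$, $2^{n_2}$, $2^{n_3}$ or $2^{m_1}$; the total is precisely the indicator-function expression for $\gamma_i$ (resp.\ $\delta_j$). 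For $\gamma_1$ one additionally collects the term $\sum_{j=1}^{8}(2^{m_j}-1)\nu_j$ coming from the $M_1$-part of the factors $1+S_j$. For instance $M_4$ occurs exactly in $\sigma(x^4)$, $\sigma(x^{14})$, $\sigma((x+1)^{14})$, $\sigma(M_1^{14})$, $\sigma(M_3^2)$ and $\sigma(S_1^2)$, which accounts for all the terms of $\gamma_4$ (the two contributions from $u=5$ and $u=15$ being grouped into $\xi_3\cdot 2^n$).

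I expect no genuine mathematical obstacle: once Lemmas \ref{S2h-squarefree}, \ref{divsigmx2h}, \ref{alldivsigmMers2h}, \ref{divsigmS2h} and Corollary \ref{specialcases} are available, the proof is a finite, purely mechanical tally. The only real hazard is bookkeeping discipline --- keeping straight which odd prime each $\sigma(S^{2h})$ produces (the two degree-$8$ primes $S_7$ and $S_8$ are particularly easy to interchange), remembering that the pure-power factors $1+M_i$ feed only $\alpha$ and $\beta$ while $1+S_j$ feeds $\alpha$, $\beta$ and (through $M_1$) $\gamma_1$, and invoking Corollary \ref{theoddivisors} at the very start to set aside $M_6,\ldots,M_{13}$ and $S_9,\ldots,S_{15}$. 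Assembling all the $\chi_w$-expressions without an index slip is where the attention is needed.
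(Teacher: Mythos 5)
Your proposal is, in substance, exactly the paper's (unwritten) proof: the paper only says the lemma follows ``directly'' from (\ref{sigmaAdetails}), and the intended argument is precisely your tally --- read the $x$- and $(x+1)$-exponents off the pure parts $(x+1)^{2^n-1}$, $x^{2^m-1}$, $(1+M_i)^{2^{n_i}-1}$, $(1+S_j)^{2^{m_j}-1}$ (the $\sigma$-of-even-power factors being odd), and read each odd prime's exponent off the square-free factorisations in Lemmas \ref{divsigmx2h}, \ref{alldivsigmMers2h}, \ref{divsigmS2h}, weighted by $2^n$, $2^m$, $2^{n_1}$, $2^{n_2}$, $2^{n_3}$, $2^{m_1}$, with the extra $\sum_j(2^{m_j}-1)\nu_j$ feeding $\gamma_1$; your check of $\gamma_4$ is the right kind of verification.

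Two repairs, though. First, do not invoke Corollary \ref{specialcases}: in the paper's logical order it rests on Corollary \ref{finalconditions}, whose proof (Lemmas \ref{aboutn2345m1} and \ref{aboutnmn1}) already uses the formulas of the present lemma, so that citation is circular. Everything you need is available upstream: $u,v\in\{1,3,5,7,9,13,15\}$, $u_1\in\{1,3,5,7,15\}$, $u_2,u_3\in\{1,3\}$, $u_i=1$ for $i\geq 4$ and $v_j=1$ for $j\geq 3$ come straight from Lemmas \ref{divsigmx2h}, \ref{alldivsigmMers2h}, \ref{divsigmS2h} (cf.\ Proposition \ref{allcases}); the truncation $c_i=0$ ($i\geq 6$), $d_j=0$ ($j\geq 9$) from Corollary \ref{theoddivisors} i) together with $\sigma(A)=A$, as you indicate; and $v_2=1$ from Corollary \ref{theoddivisors} ii) with $\sigma(A)=A$, exactly as in Lemma \ref{corol1A=sigmA} (without $v_2=1$ the displayed $\delta_1$ and the $S_7$-exponent would need an extra $\chi_3(v_2)\cdot 2^{m_2}$ coming from $\sigma({S_2}^2)=S_1S_7$). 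Second, the $S_7/S_8$ hazard you flag is real: since $\sigma({M_1}^4)=S_8$ (Lemma \ref{alldivsigmMers2h}; indeed $S_8=1+x^3(x+1)^3M_1=x^8+x^6+x^5+x^3+1$) and $\sigma({M_1}^4)$ divides $\sigma({M_1}^{14})$, a faithful tally yields $\delta_8=(\chi_5(u_1)+\chi_{15}(u_1))\cdot 2^{n_1}$ and $\delta_7=\chi_{15}(u_1)\cdot 2^{n_1}$, i.e.\ the $\chi_5(u_1)$ term belongs with $\delta_8$, not $\delta_7$ as printed; so your procedure would surface this internal discrepancy between the displayed statement and Lemma \ref{alldivsigmMers2h} rather than reproduce those two lines verbatim.
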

\subsection{More detailed conditions for $A$ to be perfect} \label{sigmAandA}
We suppose that $A$ is perfect ($A=\sigma(A)$) and we give necessary conditions for the exponents of all prime divisors of $A$.
Those conditions are very useful for computing.
We consider the notations in (\ref{lesexposants}), (\ref{sigmaAdetails}) and (\ref{expressionsigmA}).
\begin{lemma} \label{corol1A=sigmA}
If $A$ is perfect, then:\\
i) $a = 2^n u - 1, b=2^mv-1$ where $n,m \in \N$ and $u, v \in \{1,3,5,7,9,13,15\}$.\\
ii)  $c_i = 0$ and $d_j = 0$, for any $i \geq 6$ and $j \geq 9$.\\
iii) $c_1 =2^{n_1} u_1 -1$ where $u_1 \in \{1,3,5,7,15\}$.\\
iv)  $c_i =2^{n_i} u_i -1$, with $u_i \in \{1,3\}$ if $i \in \{2,3\}$, $u_i = 1$ if $i \in \{4,5\}$.\\
v) $d_j = 2^{m_j} v_j -1$ where $v_1 \in \{1,3\}$, $v_j =1$ if $j \in \{2,\ldots,8\}$.
\end{lemma}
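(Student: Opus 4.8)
The plan is to reduce all five assertions to Proposition \ref{allcases} and Corollary \ref{theoddivisors}, which are already established; the perfectness hypothesis enters only at the outset. First, ``$A$ is perfect'' means $\sigma(A)=A$, so every irreducible factor of $\sigma(A)$ is one of $x,x+1,M_1,\ldots,M_{13},S_1,\ldots,S_{15}$ (exactly the irreducibles dividing $A$); hence $\sigma(A)$ has the shape (\ref{expressionsigmA}), and comparing, for each of these pairwise-distinct irreducibles, its exponent in $A$ (given by (\ref{lesexposants})) with its exponent in $\sigma(A)$ yields $a=\alpha$, $b=\beta$, $c_i=\gamma_i$ for $i\le 13$, and $d_j=\delta_j$ for $j\le 15$. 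From here on the statement is purely about the admissible values of the odd parts $u,v,u_i,v_j$ of these exponents.

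Next I would settle (ii): by Corollary \ref{theoddivisors}(i), no $M_i$ with $i\ge 6$ and no $S_j$ with $j\ge 9$ divides $\sigma(A)=A$, so $c_i=0$ for $i\ge 6$ and $d_j=0$ for $j\ge 9$. For (i), (iii), (iv) and most of (v) I would simply invoke Proposition \ref{allcases}, bearing in mind the parametrisations $a=2^{n}u-1$, $b=2^{m}v-1$, $c_i=2^{n_i}u_i-1$, $d_j=2^{m_j}v_j-1$ (with $u,v,u_i,v_j$ odd) fixed in (\ref{lesexposants}): part (ii) of that proposition gives $u,v\in\{1,3,5,7,9,13,15\}$; part (iii) gives $u_1\in\{1,3,5,7,15\}$, $u_2,u_3\in\{1,3\}$ and $u_i=1$ for all $i\ge 4$, in particular for $i\in\{4,5\}$; part (iv) gives $v_1\in\{1,3\}$ and $v_j=1$ for $j\ge 3$.

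The single remaining point is $v_2=1$ in (v) (Proposition \ref{allcases}(iv) only yields $v_2\in\{1,3\}$), and this I expect to be the real, if small, obstacle. The plan is to rule out $v_2=3$ by contradiction: if $v_2=3$, then $d_2=2^{m_2}v_2-1=3\cdot 2^{m_2}-1\ge 2$ for every $m_2\in\N$, so ${S_2}^{2}$ divides ${S_2}^{d_2}$ and therefore divides $A=\sigma(A)$, contradicting Corollary \ref{theoddivisors}(ii). What makes this work is that the bound $d_2\ge 2$ holds uniformly in $m_2$, so a single appeal to Corollary \ref{theoddivisors}(ii) settles it; with $v_2=1$ established, part (v), and hence the lemma, follows.
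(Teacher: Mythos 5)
Your proposal is correct and follows essentially the same route as the paper: the paper likewise derives (i)--(v) from Lemmas \ref{divsigmx2h}, \ref{alldivsigmMers2h} and \ref{divsigmS2h} (the content behind Proposition \ref{allcases} and Corollary \ref{theoddivisors}), and it settles the one delicate value $v_2$ exactly as you do, by noting that $v_2=3$ would force ${S_2}^2 \mid A=\sigma(A)$, contradicting Corollary \ref{theoddivisors}~ii). The only cosmetic difference is that the paper also remarks $v_7=1$ via $S_7 \mid \sigma({S_2}^2)$, which in your write-up is already covered by Proposition \ref{allcases}~iv).
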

\begin{proof}
They follow from Lemmas \ref{divsigmx2h}, \ref{alldivsigmMers2h} and  \ref{divsigmS2h}.
Remark that $S_2$ divides $\sigma({M_1}^{6})$, $S_7$ divides both $\sigma({M_1}^{14})$ and $\sigma({S_2}^2)$. But, by Corollary \ref{theoddivisors}, ${S_2}^2$ does not divide $\sigma(A)=A$. Hence, $v_2 = v_7 = 1$.
\end{proof}

\begin{lemma} \label{aboutn2345m1}
One has:  $n_2, n_3, m_1 \leq 3$ and $n_4, n_5 \leq 5$.
\end{lemma}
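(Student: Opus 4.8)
The plan is to read off $n_2,n_3,m_1,n_4,n_5$ from the explicit formulas of Lemma \ref{exponentsofsigmA} once we know $A=\sigma(A)$, and to combine those formulas with a Hamming-weight count. Since $A$ is perfect, Lemma \ref{corol1A=sigmA}(ii) kills the high-index prime factors, so $A$ and $\sigma(A)$ have the same list of prime factors $x,x+1,M_1,\dots,M_5,S_1,\dots,S_8$; matching exponents in the factorizations (\ref{sigmaAdetails})--(\ref{expressionsigmA}) gives $c_i=\gamma_i$ and $d_1=\delta_1$. Write $w(N)$ for the number of ones in the binary expansion of $N\in\N$.

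First I would record the elementary fact: a sum of $s$ powers of $2$ (repetitions allowed) has $w(\cdot)\le s$, by induction on $s$ — adding one more term $2^e$ to a number raises its weight by at most $1$, since either bit $e$ was $0$ (weight $+1$) or a carry chain of length $t\ge1$ turns $t$ ones into $t$ zeros and creates one new one (weight change $1-t\le 0$). Applying this to Lemma \ref{exponentsofsigmA}: $\gamma_2=\gamma_3=\chi_7(u)2^n+\chi_7(v)2^m+\chi_7(u_1)2^{n_1}$ is a sum of at most three powers of $2$, so $w(\gamma_2)=w(\gamma_3)\le 3$; $\delta_1=\chi_{15}(u)2^n+\chi_{15}(v)2^m+(\chi_3(u_1)+\chi_{15}(u_1))2^{n_1}$ has at most three terms as well, because $\chi_3(u_1)+\chi_{15}(u_1)\le 1$ (no $u_1$ is both $3$ and $15$), hence $w(\delta_1)\le 3$; and $\gamma_4,\gamma_5$ are each manifestly a sum of at most five powers of $2$, using that $\xi_3=\chi_5(u)+\chi_{15}(u)\le1$ and $\xi_4=\chi_5(v)+\chi_{15}(v)\le1$, so $w(\gamma_4),w(\gamma_5)\le 5$.

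Next I would compute the weights of the left-hand sides. By Lemma \ref{corol1A=sigmA}(iv)--(v), $c_2=2^{n_2}u_2-1$, $c_3=2^{n_3}u_3-1$ with $u_2,u_3\in\{1,3\}$, $d_1=2^{m_1}v_1-1$ with $v_1\in\{1,3\}$, and $c_4=2^{n_4}-1$, $c_5=2^{n_5}-1$. Now $w(2^t-1)=t$, while $3\cdot 2^t-1=2^{t+1}+(2^t-1)$ has weight $t+1$ for $t\ge1$ and $1$ for $t=0$; in every case $w(2^t u-1)\ge t$ for $u\in\{1,3\}$. Therefore $n_2\le w(c_2)=w(\gamma_2)\le 3$, $n_3\le w(c_3)=w(\gamma_3)\le 3$, $m_1\le w(d_1)=w(\delta_1)\le 3$, and (since $u_4=u_5=1$) $n_4=w(c_4)=w(\gamma_4)\le 5$, $n_5=w(c_5)=w(\gamma_5)\le 5$, which is exactly the claim.

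There is no deep obstacle here; the only care needed is the bookkeeping of which indicator terms can be simultaneously nonzero, so that $\gamma_2,\gamma_3,\delta_1$ really are sums of at most three powers of $2$ and $\gamma_4,\gamma_5$ of at most five, and the appeal to Lemma \ref{corol1A=sigmA} to discard the vanishing high-index factors before matching $A$ against $\sigma(A)$ factor by factor. Everything else is the routine Hamming-weight estimate above.
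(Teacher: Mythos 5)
Your proof is correct, and it follows the same skeleton as the paper's — equate $c_2=\gamma_2$, $c_3=\gamma_3$, $d_1=\delta_1$, $c_4=\gamma_4$, $c_5=\gamma_5$ using Lemma \ref{exponentsofsigmA} together with the shapes $c_i=2^{n_i}u_i-1$, $d_1=2^{m_1}v_1-1$, $u_2,u_3,v_1\in\{1,3\}$, $u_4=u_5=1$ from Lemma \ref{corol1A=sigmA} (which is proved before this lemma, so there is no circularity) — but the key quantitative step is genuinely different. The paper argues by parity and by enumerating the possible odd values of the right-hand sides (for $n_4,n_5$ it introduces the sets $K_1\cup K_2$ and finishes with a Maple check), whereas you replace all of this by a uniform binary-weight estimate: a sum of $s$ powers of $2$ has Hamming weight at most $s$ (the carry argument you give is sound), while $w(2^t u-1)\ge t$ for $u\in\{1,3\}$, so $n_2,n_3,m_1\le 3$ and $n_4,n_5\le 5$ follow at once from the fact that $\gamma_2,\gamma_3,\delta_1$ are sums of at most three powers of $2$ (using $\chi_3(u_1)+\chi_{15}(u_1)\le 1$) and $\gamma_4,\gamma_5$ of at most five (using $\xi_3,\xi_4\le 1$). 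Your route buys a computation-free, case-free proof that treats all five bounds uniformly; the paper's route is more pedestrian but stays closer to the explicit lists of admissible exponent values that it feeds into the subsequent Maple search. Both rest on exactly the same inputs, so your argument can stand as a drop-in replacement for the published proof.
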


\begin{proof}
We begin with the condition $2^{m_1}v_1-1 = d_1 = \delta_1 = \varepsilon_1 \cdot 2^n +\varepsilon_2 \cdot 2^m + \varepsilon_3 \cdot 2^{n_1}$, where $\varepsilon_k \in \{0,1\}$.
If $m_1 \geq 1$, then $d_1$ is odd. So, $d_1 =1$ or it is of form $2^{h_1} +1$ or $2^{h_1}+ 2^{h_2} +1$, with $h_1, h_2 \geq 1$. Since $v_1 \in \{1,3\}$, we get $m_1 \leq 3$. By analogous proofs, we get $n_2, n_3 \leq 3$.\\
Now, consider $2^{n_4}-1 = c_4 = \gamma_4 = \varepsilon_1 \cdot 2^n +\varepsilon_2 \cdot 2^m + \varepsilon_3 \cdot 2^{n_1}+ \varepsilon_4 \cdot 2^{n_3}+ \varepsilon_5 \cdot 2^{m_1}$, where $\varepsilon_k \in \{0,1\}$ and $m_1, n_3 \leq 3$. If $n_4 \geq 1$, then $c_4$ is odd. So,
$c_4 \in K_1 \cup K_2$, where $K_1 =\{1,3,5,2^{h_1}+1, 2^{h_1}+3, 2^{h_1}+ 2^{h_2} + 1, 2^{h_1}+ 2^{h_2} + 3: h_1, h_2 \geq 1\}$ and
$K_2 =\{2^{h_1}+ 2^{h_2} +2^{h_3} + \ell: \ell \in \{1,3,5,9\}, h_1, h_2, h_3 \geq 1\}$. We get (by Maple computations):
$n_4 \leq 5$. We also have: $n_5 \leq 5$.
\end{proof}
The proof of Lemma \ref{aboutnmn1} needs
\begin{lemma} [see \cite{Gall-Rahav5}] \label{omegaleq4}~\\
If $B$ is an even non splitting perfect polynomial over $\F_2$, with $\omega(B)\leq 4$, then
$B \in \{T_1, T_2, T_3,T_4,T_5, T_6,T_7, T_{10}, T_{11}\}$.
\end{lemma}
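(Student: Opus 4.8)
Although Lemma \ref{omegaleq4} is quoted from \cite{Gall-Rahav5}, here is the line of attack I would follow. The whole argument rests on two elementary facts. First, for any odd irreducible $P$ over $\F_2$ and any $c$ with $c+1=2^ts$, $s$ odd, one has
\[
\sigma(P^{c}) = (1+P)^{2^t-1}\,\sigma(P^{s-1})^{2^t},
\qquad 1+P = x^{a'}(x+1)^{b'}R,
\]
with $a',b'\geq 1$ and $R$ odd of degree $<\deg P$ (so $R=1$ precisely when $P$ is Mersenne). Second, $\sigma(P^{s-1})=1+P+\cdots+P^{s-1}$ is square-free, prime to $P$, and (since $s$ is odd and $P(0)=P(1)=1$) takes the value $1$ at $0$ and at $1$, hence is divisible by none of $x$, $x+1$, $P$; the same is true of $\sigma(x^{u-1})$ and $\sigma((x+1)^{v-1})$. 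Since $\sigma(B)=B$ and $\sigma$ is multiplicative, every irreducible factor of each of $\sigma(x^a)$, $\sigma((x+1)^b)$, $\sigma(P_i^{c_i})$ must reappear among the (at most four) irreducible factors of $B$, which is a very rigid constraint. I would also use the bar symmetry $S\mapsto\overline S$ (it commutes with $\sigma$ and fixes $\omega$) to halve the case analysis.

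Step 1: force $x(x+1)\mid B$. Since $B$ is even and non-splitting, $\omega(B)\geq 2$ and $B$ has a non-linear irreducible factor. If $x+1\nmid B$, write $B=x^aP_1^{c_1}\cdots$; as $\sigma(x^a)$ has constant term $1$, the power $x^a$ must come from the factors $\sigma(P_i^{c_i})$, so some $c_i$ is odd, whence $(x+1)\mid 1+P_i\mid\sigma(P_i^{c_i})\mid B$ --- a contradiction. Hence $x(x+1)\mid B$, so for $\omega(B)\leq 4$ the odd part of $B$ has at most two irreducible factors; in particular $\omega(B)=2$ is impossible and $\omega(B)\in\{3,4\}$.

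Step 2: $\omega(B)=3$. Here $B=x^a(x+1)^bP^{c}$ with $P$ non-linear. Applying the facts above to $\sigma(P^{c})$: the odd part of $\sigma(P^{c})$ must be a power of $P$, so $\sigma(P^{s_P-1})=1$ (i.e.\ $c=2^{t_P}-1$) and $R=1$, so $P$ is Mersenne; and the odd part of $\sigma(x^{u-1})$ (with $a=2^nu-1$) must lie in $\{1,P\}$, likewise for $\sigma((x+1)^{v-1})$. Matching the $x$-, $(x+1)$- and $P$-valuations of $\sigma(B)$ with those of $B$ leaves only finitely many exponent vectors, and a direct check gives $B\in\{T_1,T_2,T_3,T_4\}$ (the subcases $P=M_1$ and $P=M_4$).

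Step 3: $\omega(B)=4$, the main case. Now $B=x^a(x+1)^bP^{c}Q^{d}$ with $P\neq Q$ non-linear. Comparing degrees in $1+P=x^{a'}(x+1)^{b'}R$ and $1+Q=x^{a''}(x+1)^{b''}R'$ (where $R$ is a power of $Q$ and $R'$ a power of $P$) shows that $P$ and $Q$ cannot both be genuine $2$-Mersenne, so at least one, say $P$, is Mersenne, and $Q$ is Mersenne or of the form $1+x^{a''}(x+1)^{b''}P^{e}$ --- precisely the $2$-Mersenne primes classified in Section \ref{reciproque}; simultaneously the odd parts of $\sigma(x^{u-1})$, $\sigma((x+1)^{v-1})$, $\sigma(P^{s_P-1})$, $\sigma(Q^{s_Q-1})$ must each factor into powers of $P$ and $Q$ (hence each lies in $\{1,P,Q,PQ\}$). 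Together with the valuation identities for $x$, $x+1$, $P$, $Q$, these constraints admit only finitely many solutions; a finite (computer-assisted) search over the resulting exponent vectors and over the short list of admissible pairs $(P,Q)$ then yields exactly $T_5,T_6,T_7,T_{10},T_{11}$. The hard part is exactly this last step: extracting an explicit a priori bound that makes the set of candidate $(P,Q;a,b,c,d)$ finite, and organising the parity/exponent subcases --- above all the ``Mersenne $\times$ $2$-Mersenne'' configuration, which is where $T_{10},T_{11}$ (with the $2$-Mersenne prime $S_1$) arise --- so that the residual search is genuinely manageable; everything else is bookkeeping with the displayed recursion for $\sigma(P^{c})$.
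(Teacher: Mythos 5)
You should note first that the paper itself does not prove Lemma \ref{omegaleq4}: it is quoted from \cite{Gall-Rahav5} (the $\omega(B)=3$ part going back to Canaday), so there is no internal proof to compare with; your text is an outline of the same general strategy used there (force $x(x+1)\mid B$, exploit $\sigma(P^{c})=(1+P)^{2^{t}-1}\sigma(P^{s-1})^{2^{t}}$ and the fact that every irreducible factor of $\sigma(B)$ must recur among the at most four primes of $B$, use the $\overline{\phantom{S}}$-symmetry). Steps 1 and 2 are essentially correct as sketched. But as a proof the proposal has genuine gaps.

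Concretely: (i) in Step 3 the parenthetical ``$R$ is a power of $Q$ and $R'$ a power of $P$'' presupposes that $1+P$ and $1+Q$ divide $B$, i.e.\ that the exponents $c,d$ are odd; by the dichotomy of Lemma \ref{bar-star-stable}~ii) the other possibility is that only $\sigma(P^{s-1})$ (with $s\geq 3$ odd and $c=s-1$ even) divides $B$, and in that branch nothing forces the odd part of $1+P$ into $\{1,Q^{e}\}$, so your degree comparison ruling out ``both genuinely $2$-Mersenne'' does not apply; handling this branch requires the Canaday-type analysis of equations $\sigma(P^{2h})=Q^{k}$, which is precisely where most of the work in \cite{Gall-Rahav5} sits. (ii) Your second ``elementary fact'', that $\sigma(P^{s-1})$ is square-free for every odd irreducible $P$, is not available in that generality: it is known for $x$, $x+1$ and Mersenne primes (where $P'$ splits), and the present paper needs a special argument (Lemmas \ref{S2h-squarefree} and \ref{divisorofTT'}) to obtain it merely for the family ${\cal{F}}_2$, since in general $\gcd(\sigma(P^{2h}),P')$ need not be $1$. (iii) Finally, the decisive content of the lemma --- an a priori bound making the set of candidates $(P,Q;a,b,c,d)$ finite and the verification that the residual search returns exactly $T_5,T_6,T_7,T_{10},T_{11}$ --- is exactly what you acknowledge leaving open, so what you have is a correct road map for the cited classification rather than a proof of it.
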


\begin{lemma} \label{aboutnmn1}
One has: $n,m ,n_1 \leq 4$.
\end{lemma}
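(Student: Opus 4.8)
The goal is to bound $n$, $m$, and $n_1$ by $4$. These are the exponents attached, via the factorizations $a = 2^n u - 1$, $b = 2^m v - 1$, $c_1 = 2^{n_1}u_1 - 1$, to the three "large" prime divisors $x$, $x+1$, $M_1$. The plan is to run the same argument already used for $n_2, n_3, m_1, n_4, n_5$ in Lemmas \ref{aboutn2345m1} and \ref{aboutnmn1}'s predecessors, namely: force one of the exponents of $A$ to equal the corresponding exponent of $\sigma(A)$ given in Lemma \ref{exponentsofsigmA}, observe that the right-hand side is a \emph{short} $\Z$-linear combination of powers $2^{\bullet}$ with $\{0,1\}$ coefficients (hence lies in an explicit finite set once the already-bounded exponents $n_2, n_3, m_1, n_4, n_5$ and the finite sets for $u, v, u_1$ are substituted in), and then read off the bound by a finite (Maple) check.

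The cleanest route is to use the formula for $\gamma_1$ from Lemma \ref{exponentsofsigmA}:
$$\gamma_1 = \sum_{j=1}^{8} (2^{m_j}-1)\nu_j + \xi_1 \cdot 2^n + \xi_2 \cdot 2^m + \chi_3(u_2)\cdot 2^{n_2} + \chi_3(u_3)\cdot 2^{n_3}.$$
Since $A = \sigma(A)$ forces $c_1 = \gamma_1$, i.e. $2^{n_1}u_1 - 1 = \gamma_1$, and by Corollary \ref{specialcases} we have $m_j \in \{0,1\}$ for $j \ge 2$, $v_1 \in \{1,3\}$, $n_2, n_3, m_1 \le 3$, and $u_2, u_3 \in \{1,3\}$, the right-hand side $\gamma_1 + 1$ is a sum of a bounded number of terms each in an explicit finite set — \emph{except} for the two terms $\xi_1 \cdot 2^n$ and $\xi_2 \cdot 2^m$, where $\xi_1, \xi_2 \in \{0,1\}$. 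If both $\xi_1 = \xi_2 = 0$ (i.e. $u, v \notin \{3,9,15\}$), then $\gamma_1 + 1$ is bounded by an absolute constant, so $n_1$ is bounded; otherwise one of $2^n, 2^m$ appears additively in $\gamma_1$, and since $\gamma_1 = 2^{n_1}u_1 - 1 \le 2^{n_1}\cdot 15$ with $n_1$ then appearing on both sides, a short case analysis (as in Lemma \ref{aboutn2345m1}) pins down $n_1, n, m$ simultaneously. Symmetrically, the formulas for $\gamma_4$ and $\gamma_5$ each contain a term $2^n$ (resp. $2^m$) with coefficient $\xi_3$ (resp. $\xi_4$) in $\{0,1\}$, so when $u \in \{5,15\}$ or $v \in \{5,15\}$ one gets $2^{n_4}-1 = c_4 = \gamma_4$ or $2^{n_5}-1 = c_5 = \gamma_5$ controlling $n$ or $m$ against the already-bounded $n_1, n_3, m_1, n_4$; and $\delta_3 = \chi_{13}(u)\cdot 2^n$, $\delta_4 = \chi_9(u)\cdot 2^n$, $\delta_5 = \chi_9(v)\cdot 2^m$, $\delta_6 = \chi_{13}(v)\cdot 2^m$ together with $d_j = \delta_j$ (and the bounds $d_j \le$ small, since these are exponents of the small primes $S_3,\dots,S_6$ which by Corollary \ref{theoddivisors} cannot occur to the square) handle the cases $u$ or $v \in \{9,13\}$.

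So the argument splits according to the finitely many values of $(u,v) \in \{1,3,5,7,9,13,15\}^2$ together with $u_1 \in \{1,3,5,7,15\}$: in each branch exactly one of the equalities $c_1 = \gamma_1$, $c_4 = \gamma_4$, $c_5 = \gamma_5$, $d_j = \delta_j$ becomes an equation of the form $2^{(\text{one of } n_1, n, m)} \cdot (\text{small odd}) - 1 = (\text{bounded sum of powers of } 2)$, which a finite search solves. The case $u = v = 7$ (so $\gamma_2 = \gamma_3$ carry the $2^n$ and $2^m$ terms) is handled via $c_2 = \gamma_2$, using $n_2 \le 3$ and $c_2 = 2^{n_2}u_2 - 1$ with $u_2 \in \{1,3\}$, which bounds $n$ and $m$ directly. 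Finally, the residual case to worry about is when \emph{none} of $u, v, u_1$ lands in a set that makes $n$, $m$, or $n_1$ appear additively in some exponent of $\sigma(A)$ — but if $u = v = 1$ then part i) of Proposition \ref{allcases} is contradicted (since then $A_1$ would be odd perfect), and if $u_1 = 1$ while $u = v \in \{1\}$ is excluded, one still has $n, m$ controlled through $\alpha, \beta$ or through $\gamma_1$ whenever $u$ or $v$ is $3, 9,$ or $15$; the only genuinely separate subcase is $u, v \in \{7\}$ handled above, or $u$ or $v = 5$ handled via $\gamma_4, \gamma_5$. The main obstacle is therefore purely bookkeeping: making sure the case split over $(u,v,u_1)$ is exhaustive and that in \emph{every} branch at least one exponent equality of $\sigma(A)$ with a short power-of-two expression is available to bound the relevant one of $n, m, n_1$; once that is arranged, each branch is a finite check of the type already carried out (and Maple-verified) in the preceding lemma.
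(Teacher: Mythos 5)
Your plan has the same overall shape as the paper's proof for most branches: match the exponents of $A$ with those of $\sigma(A)$ from Lemma \ref{exponentsofsigmA} and split on $(u,v,u_1)$; the cases $u=7$ (via $\gamma_2$ and $n_2\le 3$), $u\in\{9,13\}$ (via $\delta_3,\delta_4$ and $d_j\le 1$), $u\in\{5,15\}$ (via $\gamma_4,\gamma_5$ and $n_4,n_5\le 5$), and symmetrically for $v$, are exactly the paper's bullets. The genuine gap is in the residual branches where $u$ or $v$ lies in $\{1,3,5\}$ and $u_1=1$ (and, with your $\gamma_1$-route for $n_1$, more generally whenever $\xi_1$ or $\xi_2$ is nonzero). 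There your key claim --- that in every branch some exponent equality takes the form $2^{(\text{one of }n_1,n,m)}\cdot(\text{small odd})-1=(\text{bounded sum of powers of }2)$ --- is false: the only equality involving $n$, $m$, $n_1$ is $c_1=\gamma_1$, which reads $2^{n_1}u_1-1=(\text{auxiliary terms})+\xi_1 2^n+\xi_2 2^m$ with unbounded exponents on \emph{both} sides, so no finite search applies until those exponents are first controlled. Moreover, the crude bound on the auxiliary terms (about $38$, using $m_1\le3$, $m_j\le1$, $n_2,n_3\le3$) only yields $n_1\le 5$ (and similarly $n\le 5$), weaker than the stated $\le 4$; writing that ``a short case analysis pins down $n_1,n,m$ simultaneously'' asserts the conclusion rather than proving it. The paper dispatches precisely this residual case by an idea your proposal never uses: Lemma \ref{omegaleq4} (classification of even non splitting perfect polynomials with $\omega\le 4$), which identifies $A$ as one of the known $T_i$ and gives $n,m,n_1$ outright. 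Your alternative can be repaired without Lemma \ref{omegaleq4} --- in those branches the remaining equalities $d_j=\delta_j$, $c_2=\gamma_2$, $c_3=\gamma_3$ force all $m_j=0$, $v_1=1$, $u_2=u_3=1$, so $\gamma_1$ collapses to $\xi_1 2^n+\xi_2 2^m$, and then $2^{n_1}-1=\xi_12^n+\xi_22^m$ together with $b=\beta$ (for $m$ when $v=1$) pins everything down --- but none of this is carried out in the proposal, and it is exactly the missing step.

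Two smaller points. First, you cite Corollary \ref{specialcases} for $m_j\in\{0,1\}$ ($j\ge2$), $v_1\in\{1,3\}$, $n_2,n_3,m_1\le3$; that corollary is stated downstream of the present lemma and already contains $n,m,n_1\le4$, so quoting it here is formally circular. The facts you need are independently available and should be taken from Lemma \ref{corol1A=sigmA}, Lemma \ref{aboutn2345m1} and Corollary \ref{theoddivisors}. Second, for $n_1$ the paper's route via $\delta_1,\delta_2,\delta_7,\delta_8$ (when $u_1\in\{3,5,7,15\}$) is branch-independent and immediately gives $n_1\le4$ or $n_1=0$; your $\gamma_1$-route reaches the same conclusion only after the extra collapsing argument above, so if you keep your structure you should either adopt the $\delta$-route for $n_1$ or spell out the collapsing step in every branch where $\xi_1$ or $\xi_2$ is nonzero.
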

\begin{proof}
We know that $u,v \in \{1,3,5,7,9,13,15\}, u_1 \in \{1,3,5,7,15\}$ and $u_2, u_3, v_1 \in \{1,3\}$, with $u \geq 3$ or $v \geq 3$,
$n_2, n_3, m_1 \leq 3$.\\
- If $u=7$ then from the expression of $\gamma_2 = c_2 = 2^{n_2} u_2 -1$, we get $2^n \leq \gamma_2 =2^{n_2} u_2 -1 \leq 23$. So, $n \leq 4$.\\
- If $u \in \{9,13,15\}$, then  $n=0$ (from the expressions of $\delta_4$, $\delta_3$ and $\gamma_5$).\\
- Analogously, if $v \in \{7,9,13,15\}$, then $m \leq 4$.\\
- If $u_1 = 3$, then $2^{n_1} \leq \delta_1 = d_1 = 2^{m_1} v_1 -1 \leq 23$. So, $n_1 \leq 4$.\\
- If $u_1 \in \{5,7,15\}$, then  $n_1=0$ (from the expressions of $\delta_7$, $\delta_2$ and $\gamma_8$).\\
- It remains the case where $u,v \in \{1,3,5\}$ (with $u \geq 3$ or $v \geq 3$) and $u_1 = 1$. We immediately get: $d_j = \delta_j = 0$ for any $j\geq 1$ and $c_2 =c_3 = \gamma_2 = \gamma_3 = 0$. Thus $A = x^a(x+1)^b {M_1}^{c_1} {M_4}^{c_4} {M_5}^{c_5}$. We may suppose that $u \in \{3,5\}$ and we apply Lemma \ref{omegaleq4}:\\
$\bullet$ If $u= 3$, then $c_4 = \gamma_4 = 0$. So, $\omega(A) \leq 4$, $c_5 =0$ and $A =T_1$, $n =0$, $m=1$.\\
$\bullet$ If $u=5$, then $c_1 = \delta_1 = 0$, $A = x^a(x+1)^b {M_4}^{c_4} {M_5}^{c_5}$. Hence, $A = T_5$ and $n=m=0$.
\end{proof}

\begin{corollary} \label{finalconditions}
If $A$ is perfect, then $u_4 = u_5=1$, $d_j \in \{0,1\}$ for $j \geq 2$ and
$$\begin{array}{l}
u,v \in \{1,3,5,7,9,13,15\}, u_1 \in \{1,3,5,7,15\}, u_2, u_3, v_1 \in \{1,3\},\\
n,m ,n_1 \leq 4, \  n_2, n_3, m_1 \leq 3, \ n_4, n_5 \leq 5.
\end{array}$$
\end{corollary}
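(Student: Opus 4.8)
The plan is to assemble facts that have already been established. Most of the statement is immediate: the admissible values $u,v\in\{1,3,5,7,9,13,15\}$, $u_1\in\{1,3,5,7,15\}$, $u_2,u_3,v_1\in\{1,3\}$, $u_4=u_5=1$ and $v_j=1$ for $j\in\{2,\dots,8\}$ are exactly parts i), iii), iv) and v) of Lemma \ref{corol1A=sigmA}, and $c_i=d_j=0$ for $i\ge 6$, $j\ge 9$ is part ii) of the same lemma. The bounds $n_2,n_3,m_1\le 3$ and $n_4,n_5\le 5$ are Lemma \ref{aboutn2345m1}, while $n,m,n_1\le 4$ is Lemma \ref{aboutnmn1}. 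So the corollary is just a summary of these three lemmas, with one extra observation.

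That extra observation is the claim $d_j\in\{0,1\}$ for $j\ge 2$. First I would note that, since $A$ is perfect, comparing the factorization of $A$ from (\ref{lesexposants}) with that of $\sigma(A)$ from (\ref{expressionsigmA}) coefficient by coefficient — which is legitimate because Corollary \ref{theoddivisors} and the lemmas preceding it pin down exactly which irreducibles may occur in $\sigma(A)$, so that unique factorization in $\F_2[x]$ applies — yields $a=\alpha$, $b=\beta$, $c_i=\gamma_i$ and, in particular, $d_j=\delta_j$ for every $j$. Now by Lemma \ref{exponentsofsigmA}, for each $j\ge 2$ the exponent $\delta_j$ is either $0$ or a single power of $2$, whereas $d_j=2^{m_j}v_j-1=2^{m_j}-1$ because $v_j=1$ by Lemma \ref{corol1A=sigmA}(v). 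The only integers of the form $2^{m_j}-1$ that equal $0$ or a power of $2$ are $0$ (for $m_j=0$) and $1$ (for $m_j=1$); hence $m_j\in\{0,1\}$ and $d_j\in\{0,1\}$, which completes the list.

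I do not expect a genuine obstacle here: all the substantive work — the Maple-assisted bounds on $n,m,n_1,n_2,n_3,n_4,n_5,m_1$ and the determination of the admissible odd parts $u,v,u_1,u_2,u_3,v_1$ — was already carried out in Lemmas \ref{corol1A=sigmA}, \ref{aboutn2345m1} and \ref{aboutnmn1}. The only point deserving a moment's care is the justification that the identity $A=\sigma(A)$ really forces the coefficientwise equalities (in particular $d_j=\delta_j$), which rests on the earlier classification of the possible prime divisors of $\sigma(A)$; granting that, the corollary follows at once.
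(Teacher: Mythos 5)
Your proposal is correct and follows the same route as the paper, which states this corollary without a separate proof precisely because it is the compilation of Lemma \ref{corol1A=sigmA} (values of $u,v,u_i,v_j$), Lemma \ref{aboutn2345m1} ($n_2,n_3,m_1\le 3$, $n_4,n_5\le 5$) and Lemma \ref{aboutnmn1} ($n,m,n_1\le 4$), together with the exponent identification $d_j=\delta_j$ (used throughout Section 2) and the expressions of $\delta_j$, $j\ge 2$, in Lemma \ref{exponentsofsigmA}, which force $2^{m_j}-1\in\{0,1\}$. Your extra care about why unique factorization gives $d_j=\delta_j$ matches the paper's implicit use of that identification, so nothing is missing.
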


\section{Perspectives}
We have established (\cite{Gall-Rahav12}, Conjecture 5.2) a conjecture about the factorization of $\sigma(M^{2h})$, for a Mersenne prime $M$.
\begin{conjecture} \label{oldconj}
For any Mersenne prime $M$ and for any integer $h \geq 2$,  $\sigma(M^{2h})$ is divisible by a non Mersenne prime.
\end{conjecture}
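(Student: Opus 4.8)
The plan is to reduce Conjecture~\ref{oldconj} to a statement about a single cyclotomic factor and then attack that factor. Since $M+1=x^a(x+1)^b$ when $M=1+x^a(x+1)^b$, one has
\[
\sigma(M^{2h})=\frac{M^{2h+1}+1}{M+1}=\prod_{d\mid 2h+1,\ d>1}\Phi_d(M),
\]
where $\Phi_d$ denotes the $d$-th cyclotomic polynomial over $\F_2$. For $h\geq 2$ we have $2h+1\geq 5$, so among the divisors $d>1$ of $2h+1$ there is always one with $\phi(d)\geq 4$: either a prime $p\geq 5$ dividing $2h+1$, or, when $2h+1$ is a power of $3$, the divisor $d=9$. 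Hence it suffices to prove the following claim, which we call $(\ast)$: \emph{if $M$ is a Mersenne prime over $\F_2$ and $d$ is odd with $\phi(d)\geq 4$, then $\Phi_d(M)$ admits an irreducible factor that is not Mersenne.}

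First I would record the elementary facts needed. Writing $\deg M=a+b$, the polynomial $\Phi_d(M)$ has degree $\phi(d)(a+b)$; it is odd, since $\Phi_d(M)(0)=\Phi_d(M(0))=\Phi_d(1)$ is odd (over $\Z$ this value is a prime when $d$ is a prime power and $1$ otherwise), and likewise $\Phi_d(M)(1)=\Phi_d(1)$; and it is square-free, because $\gcd(\Phi_d,\Phi_d')=1$ over $\F_2$ (as $\gcd(d,2)=1$), so a common root of $\Phi_d(M)$ and $\Phi_d(M)'=\Phi_d'(M)\,M'$ would force $M'$ and $\Phi_d(M)$ to share a root, which is impossible because $M'$ is supported on $\{x,x+1\}$ together with at most one of $x,x+1$. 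I would also record the recursion $1+\sigma(M^{2h})=x^a(x+1)^b\,M\,\sigma(M^{h-1})^2$ (from $M^{2h}+1=(M^h+1)^2$ and $M^h+1=(M+1)\sigma(M^{h-1})$) and the derivative identity $\sigma(M^{2h})'=M'\,\sigma(M^{h-1})^2$; these determine the first pair $[a_1,b_1]$ in the representation of $\sigma(M^{2h})$ in terms of $a$, $b$ and the parity of $h$, and they will be used to control greatest common divisors among hypothetical Mersenne factors.

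The heart of the argument is $(\ast)$, which I would prove by contradiction. Assume $\Phi_d(M)=P_1\cdots P_r$ with every $P_i=1+x^{c_i}(x+1)^{e_i}$ a Mersenne prime. If $\alpha$ is a root of $P_i$ in $\overline{\F_2}$, then $M(\alpha)$ is a primitive $d$-th root of unity, so $\F_2(M(\alpha))=\F_{2^e}$ with $e=\mathrm{ord}_d(2)$, and $e\geq 3$ for odd $d\geq 5$; since $\F_2(\alpha)=\F_{2^{c_i+e_i}}$, this already forces $e\mid c_i+e_i=\deg P_i$. On the other hand $\alpha^{c_i}(\alpha+1)^{e_i}=1$ and, writing $\theta:=\alpha^a(\alpha+1)^b=M(\alpha)+1$, eliminating $\alpha+1$ between these two relations gives $\theta^{\,e_i}=\alpha^{\,a e_i-b c_i}$ in $\F_{2^{c_i+e_i}}^{\ast}$, which constrains the multiplicative order of $\alpha$. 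I would then play these constraints against Canaday's restrictions on the admissible exponent pairs $(c,e)$ of Mersenne primes $1+x^c(x+1)^e$ (in the spirit of the reciprocity lemmas of Section~\ref{Mersreciproque}) so as to force $\deg P_i$, hence $\phi(d)\deg M$, into an impossible range, or else to force $M$ itself into a short explicit list, at which point the claim follows from the direct computations already carried out in Lemmas~\ref{divsigmx2h}, \ref{alldivsigmMers2h} and \ref{divsigmS2h}.

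The main obstacle, and the reason Conjecture~\ref{oldconj} is still open, lies precisely in this last step: there is at present no usable control on which degrees $c+e$ can occur for a Mersenne prime $1+x^c(x+1)^e$, so the hypothetical factorization of $\Phi_d(M)$ into Mersenne primes cannot yet be excluded in full generality. A realistic intermediate target is to settle $(\ast)$ for $d=p$ prime, where $\Phi_p(M)=\sigma(M^{p-1})$ and the identity $\sigma(M^{p-1})'=M'\,\sigma(M^{(p-3)/2})^2$ yields sharper greatest-common-divisor information among the $P_i$, and, separately, to handle the cases $M\in\{M_1,M_2,M_3\}$ unconditionally; together with Lemmas~\ref{divsigmx2h}--\ref{divsigmS2h} this already covers every situation arising in the classification pursued in the present paper.
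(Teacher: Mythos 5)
This statement is labelled a conjecture in the paper, and it is genuinely open: the authors offer no proof, only the remark that partial progress exists elsewhere (\cite{Gall-Rahav14}) and that the cases where $2h+1$ has only prime divisors $p\in\{5,7\}$, or $p>7$ non-Mersenne, or with $8\nmid\mathrm{ord}_p(2)$, remain unsettled. Your text, by your own admission in its final paragraphs, is a research programme rather than a proof, so it cannot be accepted as one. The sound part is the reduction: since $2h+1$ is odd, $\sigma(M^{2h})=\prod_{d\mid 2h+1,\,d>1}\Phi_d(M)$ over $\F_2$, and for $h\geq 2$ there is always a divisor $d$ with $\phi(d)\geq 4$, so it would indeed suffice to prove your claim $(\ast)$ for a single such $d$; the oddness and square-freeness of $\Phi_d(M)$ are also correct (the latter because roots of $M'$ lie in $\{0,1\}$ while $\Phi_d(M)$ is odd). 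This is close in spirit to how the cited partial results proceed, as the paper's list of remaining cases (conditions on $\mathrm{ord}_p(2)$) suggests.

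The gap is the entire ``heart of the argument.'' Assuming $\Phi_d(M)=P_1\cdots P_r$ with each $P_i=1+x^{c_i}(x+1)^{e_i}$ Mersenne, the constraints you extract --- $\mathrm{ord}_d(2)\mid c_i+e_i$ and the relation $\theta^{e_i}=\alpha^{ae_i-bc_i}$ --- do not by themselves produce any contradiction, and you give no argument for how ``Canaday's restrictions'' would force $\deg P_i$ into an impossible range: there is no known restriction on which degrees $c+e$ occur among Mersenne primes, as you yourself concede. So claim $(\ast)$ is essentially a restatement of the conjecture for one cyclotomic factor, not a reduction to something tractable. The appeal to Lemmas \ref{divsigmx2h}, \ref{alldivsigmMers2h} and \ref{divsigmS2h} cannot close this gap either: those are finite Maple verifications over the fixed family ${\cal F}$ (with $h$ bounded via Lemma \ref{totaldegreeF}), whereas the conjecture quantifies over all Mersenne primes $M$ and all $h\geq 2$. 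In short, the statement remains a conjecture, and your proposal, while a reasonable sketch of a line of attack consistent with the literature, does not prove it.
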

We try to prove it since a few moments (see e.g. \cite{Gall-Rahav14}). It remains the cases where $2h+1$ is divisible only by primes $p$ such that $p \in \{5,7\}$ or
$(p > 7$ is not a Mersenne prime) or ($8$ does not divide the order of $2$ modulo $p$).\\
Conjecture \ref{oldconj} implies that all even non splitting perfect polynomials with only Mersenne as odd prime factors are the nine first known, $T_1, \ldots, T_9$.

In this paper, we are interested in $2$-Mersenne primes which are possible divisors of perfect polynomials. Our study are very limited because of the difficulty to prove irreducibility of such polynomials. One could continue the work in several directions.
For example, we may replace $2$- Mersenne primes ${M_1}^{abc}$ by ${M}^{abc}$, for some another Mersenne prime $M$...
Despite of this kind of difficulty, we would like to state the following two conjectures (the first and Conjecture \ref{oldconj} would imply the second).
\begin{conjecture} \label{newconj1}
For any $2$-Mersenne prime $S$ and for any integer $h \geq 2$,  $\sigma(S^{2h})$ is divisible by a non Mersenne prime or by a non $2$-Mersenne.
\end{conjecture}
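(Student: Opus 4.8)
\medskip\noindent\textbf{A possible approach to Conjecture \ref{newconj1}.}
The plan is to reduce the statement, as for Conjecture \ref{oldconj}, to an assertion about $\sigma(S^{p-1})$ with $p$ an odd prime, and then to bound the Mersenne and $2$-Mersenne factors that could occur. Over $\F_2$ one has the cyclotomic factorization
\begin{equation} \label{propcyclo}
\sigma(S^{2h}) \;=\; \frac{S^{2h+1}+1}{S+1} \;=\; \prod_{d \mid 2h+1,\ d>1} \Phi_d(S),
\end{equation}
where $\Phi_d$ is the $d$-th cyclotomic polynomial modulo $2$ and each $\Phi_d(S)$ divides $\sigma(S^{2h})$. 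Every odd integer $\geq 5$ either has a prime divisor $p \geq 5$, or equals $3^k$ with $k \geq 2$; in the latter case $\Phi_3(S) = 1+S+S^2$ divides $\sigma(S^{2h})$ by (\ref{propcyclo}). Hence it suffices to show that \emph{for every $2$-Mersenne prime $S$ and every odd prime $p$, the polynomial $\Phi_p(S) = \sigma(S^{p-1})$ has an irreducible factor of length $\geq 3$} in the sense of Definition \ref{representation}, i.e.\ one that is neither Mersenne nor $2$-Mersenne.

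Assume, for contradiction, that $S = 1+x^a(x+1)^bM^c$ (with $M$ Mersenne and $a,b,c \geq 1$) and the odd prime $p$ are such that every irreducible factor $N$ of $\Phi_p(S)$ satisfies $length(N) \leq 2$. For a root $\alpha \in \overline{\F_2}$ of such an $N$, the element $S(\alpha)$ is a primitive $p$-th root of unity in $\F_{2^{\deg N}}$, so $\mathrm{ord}_p(2) \mid \deg N$; moreover $\deg \Phi_p(S) = (p-1)\deg S$, and $\Phi_p(S)$ is odd (since $S(0)=S(1)=1$). The aim is then to combine these constraints with the rigid shape of Mersenne factors ($N = 1+x^r(x+1)^s$) and of $2$-Mersenne factors ($N = 1+x^r(x+1)^sN_1^t$, $N_1$ Mersenne): writing each exponent as $2^{n}u-1$ with $u$ odd and differentiating the resulting identities --- the mechanism already used in Sections \ref{Mersreciproque}--\ref{reciproque} and in Lemmas \ref{divsigmx2h}, \ref{alldivsigmMers2h} and \ref{divsigmS2h} --- one would bound all the degrees and exponents involved, and then eliminate the surviving configurations by a direct (Maple) computation. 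A square-freeness statement for $\Phi_p(S)$, as in Lemma \ref{S2h-squarefree}, would help here by forcing the factor exponents to lie in $\{0,1\}$, but it is not automatic since $S$ need not lie in $\mathcal{F}$ and would itself require a separate argument. Lemma \ref{bar-star-stable} gives a further restriction: in any such factorization the images of the factors under $x \mapsto x+1$ and under reciprocation must also occur, which considerably shortens the candidate list.

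The main obstacle is the one emphasised throughout: there is no classification of $2$-Mersenne primes analogous to Canaday's list (\cite{Canaday}) of Mersenne primes with Mersenne reciprocal, and deciding irreducibility of the polynomials $1+x^r(x+1)^sN_1^t$ is in general out of reach, so the enumeration of the potential factors of $\Phi_p(S)$ cannot be closed unconditionally. A further difficulty, absent from Conjecture \ref{oldconj}, is that $S$ itself ranges over an infinite family of unbounded degree, so a ``bound everything, then compute'' scheme does not terminate without some uniform extra control of $length$. A realistic first target is therefore the \emph{conditional} statement modelled on the partial progress already available toward Conjecture \ref{oldconj} (see \cite{Gall-Rahav14}): if $2h+1$ has a prime divisor $p$ subject to the same arithmetic hypothesis --- for instance $8 \mid \mathrm{ord}_p(2)$ --- then $\sigma(S^{2h})$ has a factor of length $\geq 3$; its proof should adapt, with $\sigma(S^{p-1})$ in place of $\sigma(M^{2h})$, the order and quadratic-residue argument used there. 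The primes $p$ with $8 \nmid \mathrm{ord}_p(2)$, in particular $p = 3, 5, 7$, would remain the genuinely open core of the conjecture.
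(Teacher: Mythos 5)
This statement is Conjecture \ref{newconj1}: the paper does not prove it (it is stated in the Perspectives section precisely as an open problem, alongside Conjecture \ref{oldconj}), so the only question is whether your text constitutes a proof. It does not, and you say as much yourself in the last paragraph; what you have written is a programme, ending with the admission that the enumeration cannot be closed and that the primes $p$ with $8 \nmid \mathrm{ord}_p(2)$ remain open. A proposal that leaves ``the genuinely open core of the conjecture'' untouched is not a proof of the conjecture.

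Beyond that, the one concrete reduction you do assert is false as stated. You claim it suffices to show that for \emph{every} $2$-Mersenne prime $S$ and \emph{every} odd prime $p$ the polynomial $\Phi_p(S)=\sigma(S^{p-1})$ has a factor of length $\geq 3$. For $p=3$ this already fails: by Lemma \ref{divsigmS2h}, $\sigma({S_1}^{2})=M_4M_5$ and $\sigma({S_2}^{2})=S_1S_7$, i.e.\ $\Phi_3(S)$ can factor entirely into Mersenne and $2$-Mersenne primes --- which is exactly why the conjecture is restricted to $h\geq 2$. Consequently your treatment of the case $2h+1=3^k$, $k\geq 2$ (``$\Phi_3(S)$ divides $\sigma(S^{2h})$'') gives nothing, and the same gap reappears for $2h+1$ a power of $5$ or $7$, mirroring the unresolved cases the paper lists for Conjecture \ref{oldconj}. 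The remaining steps (bounding degrees of hypothetical Mersenne/$2$-Mersenne factors of $\Phi_p(S)$, square-freeness of $\sigma(S^{2h})$ for arbitrary $2$-Mersenne $S$, closure under $x\mapsto x+1$ and reciprocation of the factor set, and a terminating computation over an infinite family of $S$ of unbounded degree) are stated as aims, not established; in particular Lemma \ref{S2h-squarefree} only covers $S\in\mathcal{F}$, and Lemma \ref{bar-star-stable} constrains divisors of a hypothetical perfect polynomial, not the factorization of an individual $\sigma(S^{2h})$. So the statement remains, as in the paper, a conjecture.
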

\begin{conjecture} \label{newconj2}
The only odd divisors of even non splitting perfect polynomials over $\F_2$ are Mersenne or $2$-Mersenne.
\end{conjecture}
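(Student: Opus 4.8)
The plan is to prove only necessity, since sufficiency --- that each $T_i$, hence each $\overline{T_i}$, is perfect and indecomposable --- is already established (\cite{Gall-Rahav12}, Theorem 1.1; \cite{Gall-Rahav13}, Theorem 1.4). So suppose $A=x^a(x+1)^bA_1$ with $a,b\geq 1$ and $A_1\neq 1$ is perfect; replacing $A$ by $\overline{A}$ if necessary we may assume $a\leq b$. Since $\{T_1,\dots,T_{11}\}$ is stable under $S\mapsto\overline{S}$, it suffices to show $A\in\{T_1,\dots,T_{11}\}$, and this automatically settles the ``indecomposable'' version of the statement as well, each $T_i$ being indecomposable.

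The first step is to cut the problem down to a \emph{finite} search. Write the exponents as in (\ref{lesexposants}): $a=2^nu-1$, $b=2^mv-1$, $c_i=2^{n_i}u_i-1$, $d_j=2^{m_j}v_j-1$ with $u,v,u_i,v_j$ odd. The divisor lemmas of Section \ref{lesdiviseursdesigmA} --- Lemmas \ref{divsigmx2h}, \ref{alldivsigmMers2h}, \ref{divsigmS2h}, which determine exactly which $\sigma(S^{2h})$ factor inside $\mathcal{F}=\mathcal{F}_1\cup\mathcal{F}_2$ (admissible by Lemma \ref{Fisadmissible}), together with Corollary \ref{theoddivisors} and Lemma \ref{corol1A=sigmA} --- force $c_i=0$ for $i\geq 6$, $d_j=0$ for $j\geq 9$, and confine the odd parts to $u,v\in\{1,3,5,7,9,13,15\}$, $u_1\in\{1,3,5,7,15\}$, $u_2,u_3,v_1\in\{1,3\}$, $u_i=1$ ($i\geq 4$), $v_j=1$ ($j\geq 2$), while Proposition \ref{allcases} excludes $u=v=1$ via Lemma \ref{oddperfect}. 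Corollary \ref{specialcases} (packaging Proposition \ref{allcases} and Corollary \ref{finalconditions}, the latter built from Lemmas \ref{aboutn2345m1}, \ref{aboutnmn1} and \ref{omegaleq4}) then bounds the $2$-adic data: $n,m,n_1\leq 4$, $n_2,n_3,m_1\leq 3$, $n_4,n_5\leq 5$, and $m_j\in\{0,1\}$ for $j\geq 2$. After this, $A$ ranges over an explicit finite list of shapes.

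The second step is to turn $\sigma(A)=A$ into a finite system in those parameters and solve it. By multiplicativity of $\sigma$ and the factorizations (\ref{sigmaAdetails}) --- again using Section \ref{lesdiviseursdesigmA} so that every prime of $\sigma(A)$ still lies in $\{x,x+1\}\cup\mathcal{F}$ (Lemmas \ref{S2h-squarefree}, \ref{divisorofTT'}) --- one may write $\sigma(A)=x^\alpha(x+1)^\beta\prod_i M_i^{\gamma_i}\prod_j S_j^{\delta_j}$ as in (\ref{expressionsigmA}), with $\alpha,\beta,\gamma_i,\delta_j$ the explicit expressions of Lemma \ref{exponentsofsigmA}. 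Then $A$ is perfect if and only if $a=\alpha$, $b=\beta$, $c_i=\gamma_i$, $d_j=\delta_j$. I would solve this by a three-stage cascade: first enumerate the tuples $[n,u,m,v,n_1,u_1,n_2,u_2]$ satisfying $1\leq a\leq b$ and $c_2=\gamma_2$; then impose $d_j=\delta_j$ to pin down $[d_1,\dots,d_8,m_1,v_1]$; then impose $a=\alpha$ and $b=\beta$; a final check of $\sigma(A)+A=0$ on the short surviving list leaves precisely the polynomials with $A\in\{T_1,\dots,T_{11}\}$.

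I expect the main obstacle to be keeping the search feasible rather than any isolated deep step. With as many as $13+15$ exponents free to vary at once, the naive candidate set is astronomically large; everything hinges on Corollary \ref{specialcases} (and, beneath it, the admissibility of $\mathcal{F}$) to make it finite, and on ordering the cascade so that $[n,u,m,v,n_1,u_1,n_2,u_2]$ --- which already controls $\gamma_2=\gamma_3$ and most of the $\delta_j$ --- is fixed first, so that the intermediate lists stay at the scale of $10^4$ tuples instead of exploding. A conceptually essential point that must be secured before any computation is that $\sigma(A)$ cannot acquire a prime outside $\mathcal{F}$; this is exactly the role of the square-freeness and ``$\sigma(S^{2h})$ factors in $\mathcal{F}$'' lemmas of Section \ref{lesdiviseursdesigmA}, and is why those come logically prior to the perfection equations.
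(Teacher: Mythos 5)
Your proposal does not address the statement at hand. The statement is Conjecture \ref{newconj2}, which asserts that \emph{every} even non splitting perfect polynomial over $\F_2$ has only Mersenne or $2$-Mersenne odd prime divisors; the paper offers no proof of it (it is stated precisely as a conjecture, with the remark that it would follow from Conjecture \ref{oldconj} together with Conjecture \ref{newconj1}, neither of which is proved). What you have written out is, essentially verbatim, the paper's proof strategy for Theorem \ref{mainresult1}: there the polynomial $A$ is \emph{assumed from the outset} to be of the form $x^a(x+1)^b\prod_i M_i^{c_i}\prod_j S_j^{d_j}$, i.e.\ its odd prime factors are hypothesized to lie in the finite admissible family ${\cal F}={\cal F}_1\cup{\cal F}_2$, and under that hypothesis the perfection equation $\sigma(A)=A$ becomes a finite search.

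The gap is exactly that hypothesis. For the conjecture, an arbitrary even non splitting perfect polynomial could a priori be divisible by an odd prime that is $k$-Mersenne with $k\geq 3$, or by an odd prime admitting no such representation at all, and nothing in your argument (or in the paper) excludes this. All of the finiteness you rely on --- Lemmas \ref{divsigmx2h}, \ref{alldivsigmMers2h}, \ref{divsigmS2h}, Corollary \ref{specialcases}, the degree bound of Lemma \ref{totaldegreeF} --- is internal to ${\cal F}$: these results say which $\sigma(S^{2h})$ factor \emph{inside} ${\cal F}$, not that $\sigma(S^{2h})$ for a general prime divisor $S$ of a general perfect polynomial cannot introduce primes of a completely different shape. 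Indeed the paper's own discussion in the Perspectives section makes clear that even the Mersenne-only case (Conjecture \ref{oldconj}) is open, and that the intended route to Conjecture \ref{newconj2} passes through the unproved Conjectures \ref{oldconj} and \ref{newconj1}. So your cascade-search argument proves (a version of) Theorem \ref{mainresult1}, but leaves the conjecture untouched; no amount of reorganizing the computation closes the quantifier over all perfect polynomials with unrestricted odd prime divisors.
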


\def\biblio{\def\titrebibliographie{References}\thebibliography}
\let\endbiblio=\endthebibliography

%\def\references{\def\titrebibliographie{R\' ef\'
%erences}\thebibliography}
%\let\endreferences=\endthebibliography

%%%% MACROS DE SEROUL POUR LES REFERENCES %%%%

%%%%%%% bibliographie selon AMS style %%%%%%%%%%%
%%%%%%% inspir de TUGboat 11 (1990), p. 609 %%%%%%%

\newbox\auteurbox
\newbox\titrebox
\newbox\titrelbox
\newbox\editeurbox
\newbox\anneebox
\newbox\anneelbox
\newbox\journalbox
\newbox\volumebox
\newbox\pagesbox
\newbox\diversbox
\newbox\collectionbox
%--------------------------------------------
\def\fabriquebox#1#2{\par\egroup
\setbox#1=\vbox\bgroup \leftskip=0pt \hsize=\maxdimen \noindent#2}
%--------------------------------------------
\def\bibref#1{\bibitem{#1}

%\mbox{}\ignorespaces

\setbox0=\vbox\bgroup}
%--------------------------------------------
\def\auteur{\fabriquebox\auteurbox\styleauteur}
\def\titre{\fabriquebox\titrebox\styletitre}
\def\titrelivre{\fabriquebox\titrelbox\styletitrelivre}
\def\editeur{\fabriquebox\editeurbox\styleediteur}

\def\journal{\fabriquebox\journalbox\stylejournal}

\def\volume{\fabriquebox\volumebox\stylevolume}
\def\collection{\fabriquebox\collectionbox\stylecollection}
%--------------------------------------------
{\catcode`\- =\active\gdef\annee{\fabriquebox\anneebox\catcode`\-
=\active\def -{\hbox{\rm
\string-\string-}}\styleannee\ignorespaces}}
%--------------------------------------------
{\catcode`\-
=\active\gdef\anneelivre{\fabriquebox\anneelbox\catcode`\-=
\active\def-{\hbox{\rm \string-\string-}}\styleanneelivre}}
%--------------------------------------------
{\catcode`\-=\active\gdef\pages{\fabriquebox\pagesbox\catcode`\-
=\active\def -{\hbox{\rm\string-\string-}}\stylepages}}
%--------------------------------------------
{\catcode`\-
=\active\gdef\divers{\fabriquebox\diversbox\catcode`\-=\active
\def-{\hbox{\rm\string-\string-}}\rm}}
%--------------------------------------------
\def\ajoutref#1{\setbox0=\vbox{\unvbox#1\global\setbox1=
\lastbox}\unhbox1 \unskip\unskip\unpenalty}
%--------------------------------------------
\newif\ifpreviousitem
\global\previousitemfalse
\def\separateur{\ifpreviousitem {,\ }\fi}
%--------------------------------------------
\def\voidallboxes
{\setbox0=\box\auteurbox \setbox0=\box\titrebox
\setbox0=\box\titrelbox \setbox0=\box\editeurbox
\setbox0=\box\anneebox \setbox0=\box\anneelbox
\setbox0=\box\journalbox \setbox0=\box\volumebox
\setbox0=\box\pagesbox \setbox0=\box\diversbox
\setbox0=\box\collectionbox \setbox0=\null}
%--------------------------------------------
\def\fabriquelivre
{\ifdim\ht\auteurbox>0pt
\ajoutref\auteurbox\global\previousitemtrue\fi
\ifdim\ht\titrelbox>0pt
\separateur\ajoutref\titrelbox\global\previousitemtrue\fi
\ifdim\ht\collectionbox>0pt
\separateur\ajoutref\collectionbox\global\previousitemtrue\fi
\ifdim\ht\editeurbox>0pt
\separateur\ajoutref\editeurbox\global\previousitemtrue\fi
\ifdim\ht\anneelbox>0pt \separateur \ajoutref\anneelbox
\fi\global\previousitemfalse}
%--------------------------------------------
\def\fabriquearticle
{\ifdim\ht\auteurbox>0pt        \ajoutref\auteurbox
\global\previousitemtrue\fi \ifdim\ht\titrebox>0pt
\separateur\ajoutref\titrebox\global\previousitemtrue\fi
\ifdim\ht\titrelbox>0pt \separateur{\rm in}\
\ajoutref\titrelbox\global \previousitemtrue\fi
\ifdim\ht\journalbox>0pt \separateur
\ajoutref\journalbox\global\previousitemtrue\fi
\ifdim\ht\volumebox>0pt \ \ajoutref\volumebox\fi
\ifdim\ht\anneebox>0pt  \ {\rm(}\ajoutref\anneebox \rm)\fi
\ifdim\ht\pagesbox>0pt
\separateur\ajoutref\pagesbox\fi\global\previousitemfalse}
%--------------------------------------------
\def\fabriquedivers
{\ifdim\ht\auteurbox>0pt
\ajoutref\auteurbox\global\previousitemtrue\fi
\ifdim\ht\diversbox>0pt \separateur\ajoutref\diversbox\fi}
%--------------------------------------------
\def\endbibref
{\egroup \ifdim\ht\journalbox>0pt \fabriquearticle
\else\ifdim\ht\editeurbox>0pt \fabriquelivre
\else\ifdim\ht\diversbox>0pt \fabriquedivers \fi\fi\fi.\voidallboxes}
%--------------------------------------------

\let\styleauteur=\sc
\let\styletitre=\it
\let\styletitrelivre=\sl
\let\stylejournal=\rm
\let\stylevolume=\bf
\let\styleannee=\rm
\let\stylepages=\rm
\let\stylecollection=\rm
\let\styleediteur=\rm
\let\styleanneelivre=\rm

\begin{biblio}{99}

\begin{bibref}{Canaday}
\auteur{E. F. Canaday} \titre{The sum of the divisors of a
polynomial} \journal{Duke Math. J.} \volume{8} \pages 721-737 \annee
1941
\end{bibref}

\begin{bibref}{Gall-Rahav5}
\auteur{L. H. Gallardo, O. Rahavandrainy} \titre{Even perfect
polynomials over $\F_2$ with four prime factors} \journal{Intern. J.
of Pure and Applied Math.} \volume{52(2)} \pages 301-314 \annee 2009
\end{bibref}

\begin{bibref}{Gall-Rahav12}
\auteur{L. H. Gallardo, O. Rahavandrainy} \titre{On even (unitary) perfect
polynomials over $\F_{2}$ } \journal{Finite Fields Appl.} \volume{18} \pages 920-932 \annee 2012
\end{bibref}

\begin{bibref}{Gall-Rahav13}
\auteur{L. H. Gallardo, O. Rahavandrainy} \titre{Characterization of Sporadic perfect
polynomials over $\F_{2}$ } \journal{Functiones et Approx.} \volume{55.1} \pages 7-21 \annee 2016
\end{bibref}

\begin{bibref}{Gall-Rahav14}
\auteur{L. H. Gallardo, O. Rahavandrainy} \titre{On (unitary) perfect polynomials over $\F_2$ with only Mersenne primes as
odd divisors} \journal{arXiv: 1908.00106v1} \annee Jul. 2019
\end{bibref}

\end{biblio}

\end{document}